\DeclareFontFamily{OT1}{rsfs}{}
\DeclareFontShape{OT1}{rsfs}{n}{it}{<-> rsfs10}{}
\DeclareMathAlphabet{\mathscr}{OT1}{rsfs}{n}{it}
\newtheorem{theorem}{Theorem}[section]
\newtheorem{lemma}[theorem]{Lemma}
\newtheorem{corol}[theorem]{Corollary}
\newtheorem{prop}[theorem]{Proposition}
\newtheorem{conj}{Conjecture}
\theoremstyle{definition} \newtheorem{defin}[theorem]{Definition}}
\theoremstyle{remark} \newtheorem{remark}[theorem]{Remark}
\newtheorem{example}[theorem]{Example}}
\numberwithin{equation}{section}
\newcommand{\bbA}{{\mathbb A}}
\newcommand{\bbC}{{\mathbb C}}
\newcommand{\bbF}{{\mathbb F}}
\newcommand{\bbH}{{\mathbb H}}
\newcommand{\bbN}{{\mathbb N}}
\newcommand{\bbP}{{\mathbb P}}
\newcommand{\bbQ}{{\mathbb Q}}
\newcommand{\bbR}{{\mathbb R}}
\newcommand{\bbZ}{{\mathbb Z}}
\newcommand{\calA}{{\mathcal A}}
\newcommand{\calC}{{\mathcal C}}
\newcommand{\calD}{{\mathcal D}}
\newcommand{\calF}{{\mathcal F}}
\newcommand{\calG}{{\mathcal G}}
\newcommand{\calI}{{\mathcal I}}
\newcommand{\calL}{{\mathcal L}}
\newcommand{\calO}{{\mathcal O}}
\newcommand{\calR}{{\mathcal R}}
\newcommand{\fC}{{\mathfrak C}}
\newcommand{\fT}{{\mathfrak R}}
\newcommand{\ovU}{\overline{U}}
\newcommand{\ovf}{\overline{f}}
\newcommand{\csm}{{c_{\mathrm{SM}}}}
\newcommand{\one}{1\hskip-3.5pt1}
\newcommand{\qede}{\hfill$\lrcorner$}
\newcommand{\Kt}{\mathrm{K}} 
\newcommand{\ChevG}{\mathscr{G}} 
\newcommand{\ChevB}{\mathscr{B}} 
\newcommand{\ChevT}{\mathscr{T}} 
\newcommand{\ChevN}{\mathscr{N}} %
\newcommand{\LangG}{G} 
\newcommand{\LangB}{B} 
\newcommand{\LangT}{T} 
\newcommand{\Groth}{K_0} 
\newcommand{\opT}{\fT} 
\newcommand{\id}{\mathrm{id}}
\newcommand{\codim}{\mathrm{codim}}
\newcommand{\rk}{\mathrm{rk}}
\newcommand{\Supp}{\mathrm{Supp}}
\newcommand{\var}{\mathrm{var}}
\newcommand{\sm}{\mathrm{sm}}
\newcommand{\pt}{\mathrm{pt}}
\newcommand{\pr}{\mathrm{pr}}
\newcommand{\loc}{\mathrm{loc}}
\newcommand{\mot}{\mathrm{mot}}
\newcommand{\val}{\mathrm{val}}
\DeclareMathOperator{\Ind}{Ind}
\DeclareMathOperator{\Frac}{Frac}
\DeclareMathOperator{\stab}{stab}
\DeclareMathOperator{\Pic}{Pic}
\DeclareMathOperator{\Lie}{Lie}
\DeclareMathOperator{\Attr}{Attr}
\DeclareMathOperator{\Hom}{Hom}
\DeclareMathOperator{\MC}{MC}
\DeclareMathOperator{\SMC}{SMC}
\DeclareMathOperator{\gr}{gr}
\DeclareMathOperator{\ch}{ch}
\DeclareMathOperator{\vol}{vol}
\DeclareMathOperator{\opp}{opp}
\DeclareMathOperator{\Fl}{Fl}
\DeclareMathOperator{\SL}{SL}
\DeclareMathOperator{\Rad}{Rad}
\begin{document}
\title[Motivic Chern classes]{Motivic Chern classes of Schubert cells,
Hecke algebras, and applications to Casselman's problem}

\date{\today}
\author{Paolo Aluffi}
\address{
Mathematics Department, 
Florida State University,
Tallahassee FL 32306
}
\email{aluffi@math.fsu.edu}

\author{Leonardo C.~Mihalcea}
\address{
Department of Mathematics, 
Virginia Tech University, 
Blacksburg, VA 24061
}
\email{lmihalce@vt.edu}

\author{J\"org Sch\"urmann}
\address{Mathematisches Institut, Universit\"at M\"unster, Germany}
\email{jschuerm@uni-muenster.de}

\author{Changjian Su}
\address{Department of Mathematics, University of Toronto, 
Toronto, ON, M5S, 2E4, Canada}
\email{changjiansu@gmail.com}
\subjclass[2010]{Primary 14C17, 20C08, 14M15; Secondary 17B10, 14N15, 33D80}
\keywords{motivic Chern class; Schubert cells; stable envelopes; Hecke
algebra; Casselman's problem; principal series representation}
\thanks{P.~Aluffi was supported in part by NSA Award H98230-16-1-0016
and a Simons Collaboration Grant; L.~C.~Mihalcea was supported in part
by NSA Young Investigator Award H98320-16-1-0013 and a Simons
Collaboration Grant; J.~Sch\"{u}rmann was funded by the Deutsche
Forschungsgemeinschaft (DFG, German Research Foundation) under
Germany's Excellence Strategy -- EXC 2044-390685587, Mathematics
M\"{u}nster: Dynamics -- Geometry -- Structure}

\begin{abstract} 
Motivic Chern classes are elements in the $\Kt$-theory of an algebraic
variety~$X$, depending on an extra parameter $y$. They are determined
by functoriality and a normalization property for smooth $X$. In this
paper we calculate the motivic Chern classes of Schubert cells in the
(equivariant) $\Kt$-theory of flag manifolds $G/B$. We show that the
motivic class of a Schubert cell is determined recursively by the
Demazure-Lusztig operators in the Hecke algebra of the Weyl group of
$G$, starting from the class of a point. The resulting classes are
conjectured to satisfy a positivity property. We use the recursions to
give a new proof that they are equivalent to certain $\Kt$-theoretic
stable envelopes recently defined by Okounkov and collaborators, thus
recovering results of Feh{\'e}r, Rim{\'a}nyi and Weber. The Hecke
algebra action on the $\Kt$-theory of the Langlands dual flag manifold
matches the Hecke action on the Iwahori invariants of the principal
series representation associated to an unramified character for a
group over a nonarchimedean local field. This gives a correspondence
identifying the
duals of the motivic Chern classes to the
standard basis in the Iwahori invariants, and the fixed point basis to
Casselman's basis. We apply this correspondence to prove two
conjectures of Bump, Nakasuji and Naruse concerning factorizations and
holomorphy properties of the coefficients in the transition matrix
between the standard and the Casselman's basis.
\bigskip

\noindent{\sc R\'esum\'e.}
Les {\em classes de Chern motiviques\/} sont des \'el\'ements de la
$\Kt$-th\'eorie d'une vari\'et\'e {alg\'ebri\-que}~$X$, qui d\'ependent
d'un param\`etre suppl\'ementaire $y$. Elles sont d\'etermin\'ees par la
fonctorialit\'e et une propri\'et\'e de normalisation pour $X$ lisse.
Dans cet article, nous calculons les classes de Chern motiviques des
cellules de Schubert dans la $\Kt$-th\'eorie (\'equivariante) des
vari\'et\'es de drapeaux $G/B$. Nous montrons que la classe motivique
d'une cellule de Schubert est d\'et\'ermin\'ee r\'ecursivement gr\^ace
aux op\'erateurs de Demazure-Lusztig de l'alg\`ebre de
Hecke du groupe de Weyl de $G$, \`a partir de la classe d'un point. 
Nous conjecturons que les classes obtenues satisfont une propri\'et\'e
de positivit\'e.  Nous utilisons nos r\'ecurrences pour obtenir une
nouvelle preuve du fait que les classes sont \'equivalentes \`a
certaines enveloppes stables d\'efinies r\'ecemment en $\Kt$-th\'eorie
par Okounkov et ses collaborateurs, retrouvant ainsi un r\'esultat de 
Feh\'er, Rim\'anyi, et Weber. L'action de l'alg\`ebre de Hecke sur la
$\Kt$-th\'eorie de la vari\'et\'e de drapeaux du dual de Langlands
co\"incide avec l'action de Hecke sur les invariants d'Iwahori de la
repr\'esentation par s\'erie principale associ\'ee \`a un caract\`ere non
ramifi\'e pour un groupe sur un corps local non archim\'edien.  Cela
induit une correspondance identifiant les duaux des
classes de Chern motiviques \`a la base standard du module des
invariants d'Iwahori, et la base des points fixes \`a la base de
Casselman. Nous appliquons ce r\'esultat pour d\'emontrer deux
conjectures dues \`a Bump, Nakasuji et Naruse concernant les factorisations
et les propri\'et\'es d'holomorphie des coefficients de la matrice de
transition entre la base standard et la base de Casselman.
\end{abstract}

\maketitle

\section{Introduction} 
Let $X$ be a complex algebraic variety, and let $\Groth(\var/X)$ be the
(relative) Grothendieck group of varieties over $X$. It consists of
classes of morphisms $[f: Z \to X]$ modulo the scissors relations;
cf.~\cite{looijenga:motivic,bittner:universal} and \S\ref{s:MC}
below.~Brasselet, Sch{\"u}rmann and
Yokura~\cite{brasselet.schurmann.yokura:hirzebruch} defined the {\em
  motivic Chern transformation} $\MC_y: \Groth(\var/X) \to K(X)[y]$ with
values in the $\Kt$-theory group of coherent sheaves in $X$ to which
one adjoins a formal variable $y$. The transformation $\MC_y$ is a
group homomorphism, it is functorial with respect to proper
push-forwards, and if $X$ is smooth, it satisfies the normalization
condition
\[ 
\MC_y[\id_X: X \to X] = \sum [\wedge^j T^*(X)] y^j \/. 
\] 
Here $[\wedge^j T^*(X)]$ is the $\Kt$-theory class of the bundle of
degree $j$ differential forms on $X$. If $Z \subseteq X$ is a
constructible subset, we denote by $\MC_y(Z):= \MC_y[Z \hookrightarrow
X] \in K(X)[y]$ the motivic Chern class of $Z$. Because $\MC_y$ is a
group homomorphism, it follows that if $X = \bigsqcup Z_i$ is a
disjoint union of constructible subsets, then $\MC_y(X) = \sum
\MC_y(Z_i)$.  As explained
in~\cite{brasselet.schurmann.yokura:hirzebruch}, the motivic Chern
class $\MC_y(Z)$ is related by a Hirzebruch-Riemann-Roch type statement
to the Chern-Schwartz-MacPherson (CSM) class $\csm(Z)$ in the homology 
of~$X$. We recall that the existence and functoriality properties of this 
CSM class were conjectured by Deligne and Grothendieck and proved by 
Robert MacPherson~(\cite{macpherson:chern}).
Earlier, Marie-H\'el\`ene Schwartz had independently established a theory
of Chern classes for singular varieties, using obstruction theory
(\cite{schwartz:1, schwartz:2}). Jean-Paul Brasselet and Schwartz proved
that the two classes coincide via the Alexander isomorphism (\cite{BS81}).
Both the
motivic and the CSM classes give a functorial way to attach
$\Kt$-theory, respectively (co)homology classes, to constructible
subsets, and both satisfy the usual motivic relations. There is also an
equivariant version of the motivic Chern class transformation, which
uses equivariant varieties and morphisms, and has values in the
appropriate equivariant $\Kt$-theory group. Its definition was given
in~\cite{feher2018motivic}, following closely the approach
of~\cite{brasselet.schurmann.yokura:hirzebruch}.  

We take this opportunity to provide further details on the
construction and properties of equivariant motivic Chern classes, such
as functoriality and a Verdier-Riemann-Roch formula; see \S\ref{s:MC}
below. However, the main goals of this paper are to build the
computational foundations for the study of the (equivariant) motivic
Chern classes of Schubert cells in the generalized flag manifolds, and
to relate this to the representation theory of $p$-adic groups. Our
main application consists of formulas for the transition coefficients
between the standard and the Casselman bases of the module of Iwahori
invariants of the principal series representation, in terms of
localizations of motivic Chern classes.

Let $G$ be a complex, semisimple, linear algebraic group, and $B$ a
Borel subgroup.  By functoriality, the (equivariant) motivic Chern
classes of Schubert cells in $G/B$ determine those in any flag
manifold $G/P$, where $P$ is a parabolic subgroup. Based on previously
discovered features of the CSM classes of Schubert
cells~\cite{aluffi.mihalcea:eqcsm,rimanyi.varchenko:csm,AMSS:shadows},
it was expected that the motivic classes would be closely related to
objects which appear in geometric representation theory. We prove in
this paper that the motivic Chern classes of Schubert cells are
recursively determined by the Demazure-Lusztig operators which appear
in early works of Lusztig on Hecke algebras~\cite{lusztig:eqK}.
Further, the motivic classes of Schubert cells are equivalent (in a
precise sense) to the $\Kt$-theoretic stable envelopes defined by
Okounkov and collaborators in~\cite{okounkov2015lectures,AO:elliptic,
okounkov2016quantum}.~This equivalence was proved recently by
Feh{\'e}r, Rim{\'a}nyi and Weber~\cite{feher2018motivic}; our
approach, based on comparing the Demazure-Lusztig recursions to the
recursions for the stable envelopes found by Su, Zhao and Zhong
in~\cite{su2017k} gives another proof of this result. Via this
equivalence, the motivic Chern classes can be considered as natural
analogues of the Schubert classes in the $\Kt$-theory of the cotangent
bundle of $G/B$.
 
As in the authors' previous work on CSM classes~\cite{AMSS:shadows}, 
the connections to Hecke algebras and $\Kt$-theoretic stable envelopes
yield remarkable identities among (duals of)
motivic Chern classes. We use these identities to prove two
conjectures of Bump, Nakasuji and
Naruse~\cite{BN11,bump2017casselman,nakasuji2015yang} about the
coefficients in the transition matrix between the Casselman's basis
and the standard basis in the Iwahori-invariant space of the principal
series representation for an unramified character for a group over a
non archimedean local field.

We present next a more extensive description of our results.

\subsection{Statement of results} 
Let $G$ be a complex, semisimple, linear algebraic group, and fix $B,
B^-$ a pair of opposite Borel subgroups of $G$. Denote by $T:= B \cap
B^-$ the maximal torus, by $W:= N_G(T)/T$ the Weyl group, and by $X:=
G/B$ the (generalized) flag variety. For each Weyl group element $w
\in W$ consider the Schubert cell $X(w)^\circ:= B w B/B$, a subvariety
of (complex) dimension $\ell(w)$. The opposite Schubert cell
$Y(w)^\circ:= B^- w B/B$ has complex codimension $\ell(w)$. The
closures $X(w)$ and $Y(w)$ of these cells are the Schubert varieties.
Let $\calO_w$, respectively $\calO^w$ be the $\Kt$-theoretic Schubert
classes associated to the structure sheaves of $X(w)$, respectively
$Y(w)$. The equivariant $\Kt$-theory ring of $X$, denoted by $K_T(X)$,
is an algebra over $K_T(\pt)=R(T)$---the representation ring of
$T$---and it has an $R(T)$-basis given by the Schubert classes
$\calO_w$ (or $\calO^w$), where $w$ varies in the Weyl group $W$.

If $E$ is an equivariant vector bundle over $X$, we denote by $[E]$
its class in $K_T(X)$, and by $\lambda_y(E)$ the class
\[ 
\lambda_y(E) = \sum [\wedge^i E] y^i \in K_T(X) [y] \/. 
\] 
For a $T$-stable subvariety $\Omega \subseteq X$ recall the notation
\[ 
\MC_y(\Omega):= \MC_y[ \Omega \hookrightarrow X] \in K_T(X)[y] \/. 
\] 
Our first main result is a recursive formula to calculate
$\MC_y(X(w)^\circ)$, the (equivariant) motivic Chern class of the
Schubert cell. For each simple positive root $\alpha_i$, consider the
Demazure operator $\partial_i: K_T(X) \to K_T(X)$
(\cite{demazure:desingularisations},
\cite{kostant.kumar:KT});
this is a
$K_T(\pt)$-linear endomorphism.  Extend $\partial_i$ linearly with
respect to $y$, and define the Demazure-Lusztig (DL) operators
$ \opT_i, \opT^\vee_i: K_T(X)[y] \to K_T(X)[y]$ by
\[ 
\opT_i: = \lambda_y(\calL_{\alpha_i}) \partial_i 
- \id; \quad \opT^\vee_i: = \partial_i 
\lambda_y(\calL_{\alpha_i}) - \id \/,
\] 
where $\calL_{\alpha_i} = G \times^B \bbC_{\alpha_i}$ is the 
equivariant line bundle whose fiber over $1.B$ has weight 
$\alpha_i$.  The operator
$\opT^\vee_i$ appeared classically in Lusztig's study of Hecke
algebras~\cite{lusztig:eqK}, and $\opT_i$ appeared recently in
related
works~\cite{lee.lenart.liu:whittaker,brubaker.bump.licata}.~The two
operators are adjoint to each other via the $\Kt$-theoretic
intersection pairing; see \S\ref{ss:DL} below. Our first main
result is the following (cf.~Theorem~\ref{thm:DLrecursion}).

\begin{theorem}\label{thm:introT1} 
Let $w \in W$ and let $s_i$ be a simple reflection such that
$\ell(ws_i) > \ell(w)$. Then
\[ 
\MC_y(X(ws_i)^\circ) = \opT_i( \MC_y(X(w)^\circ)) \/. 
\] 
\end{theorem}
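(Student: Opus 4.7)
The plan is to deduce the recursion from the Verdier--Riemann--Roch (VRR) property for motivic Chern classes established in \S\ref{s:MC}, applied to the minimal parabolic projection $\pi_i : G/B \to G/P_i$ attached to $s_i$. Three geometric inputs are needed. First, $\pi_i$ is a smooth, proper, $T$-equivariant $\mathbb{P}^1$-bundle, and its relative cotangent line bundle satisfies $T^*_{\pi_i} \cong \calL_{\alpha_i}$; the identification follows from the fact that the tangent space at $1.B$ to the fiber $P_i/B$ is the root space $\g_{-\alpha_i}$. Second, the classical Demazure operator admits the geometric description $\partial_i = \pi_i^{*}\circ (\pi_i)_{*}$ on $K_T(G/B)$. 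Third, under the assumption $\ell(ws_i) > \ell(w)$ the element $w$ is the minimal-length representative of its coset in $W/\{1,s_i\}$; consequently $\pi_i$ restricts to an isomorphism from $X(w)^\circ$ onto its image $S := \pi_i(X(w)^\circ) \subset G/P_i$, and the full preimage decomposes motivically as
\[
\pi_i^{-1}(S) \;=\; X(w)^\circ \,\sqcup\, X(ws_i)^\circ,
\]
with $\pi_i|_{X(ws_i)^\circ}$ an $\mathbb{A}^1$-bundle over $S$.

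With these inputs in hand, the computation is short. Applying VRR for the smooth morphism $\pi_i$ to the constructible set $S$ yields
\[
MC_y\bigl(\pi_i^{-1}(S)\bigr) \;=\; \pi_i^{*} MC_y(S) \cdot \lambda_y(T^*_{\pi_i}) \;=\; \lambda_y(\calL_{\alpha_i}) \cdot \pi_i^{*} MC_y(S).
\]
Because $\pi_i|_{X(w)^\circ}$ is an isomorphism onto $S$, proper functoriality of $MC_y$ gives $MC_y(S) = (\pi_i)_{*} MC_y(X(w)^\circ)$, and the geometric description of $\partial_i$ then rewrites the right-hand side as $\lambda_y(\calL_{\alpha_i})\,\partial_i MC_y(X(w)^\circ)$. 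On the other hand, the scissor relations for $MC_y$ combined with the decomposition of $\pi_i^{-1}(S)$ identify the left-hand side with $MC_y(X(w)^\circ) + MC_y(X(ws_i)^\circ)$. Equating the two expressions and solving for $MC_y(X(ws_i)^\circ)$ produces
\[
MC_y(X(ws_i)^\circ) \;=\; \bigl(\lambda_y(\calL_{\alpha_i})\,\partial_i - \id\bigr) MC_y(X(w)^\circ) \;=\; \calT_i\, MC_y(X(w)^\circ),
\]
which is the claimed recursion.

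The main point to verify carefully is the VRR identity in the generality used above---for an arbitrary constructible $S$ rather than only for smooth closed subvarieties, and in the $T$-equivariant setting. The smooth-subvariety case is immediate from the Whitney-type decomposition $\lambda_y(T^*(G/B)) = \pi_i^{*}\lambda_y(T^*(G/P_i)) \cdot \lambda_y(T^*_{\pi_i})$ together with proper functoriality, and the extension to arbitrary constructible $S$ reduces to it via the motivic scissor relations and resolution of singularities. Once the foundational results of \S\ref{s:MC} are invoked, the Demazure--Lusztig recursion is essentially a bookkeeping statement about the $\mathbb{P}^1$-fibration $\pi_i$ and the motivic structure of the preimage of a single Schubert cell.
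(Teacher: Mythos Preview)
Your argument is correct, and it is genuinely different from the paper's. The paper proves Theorem~\ref{thm:DLrecursion} by working on the Bott--Samelson tower: it takes a reduced word for $ws_i$, passes to the log resolution $Z\to X(ws_i)$, and computes $\theta_*MC_y(Z\setminus\partial Z)$ by unpacking the SNC boundary via Lemma~\ref{lemma:int}(c), VRR for the $\bbP^1$-bundle $\pi:Z\to Z'$, and the identification of the diagonal in $G/B\times_{G/P_i}G/B$ from Lemma~\ref{lemma:diag}. You bypass all of this by applying VRR once to the minimal parabolic projection $p_i:G/B\to G/P_i$ itself and using only the constructible decomposition $p_i^{-1}(S)=X(w)^\circ\sqcup X(ws_i)^\circ$. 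What you gain is brevity and transparency: the recursion becomes a direct bookkeeping consequence of $\partial_i=p_i^*(p_i)_*$ and additivity, with no resolution of singularities entering the picture. What the paper's approach buys is the more general statement of Theorem~\ref{thm:DLrecursion} for possibly non-reduced words, $\theta_*MC_y(Z\setminus\partial Z)=\calT_{i_k}\theta'_*MC_y(Z'\setminus\partial Z')$, which your method does not directly yield; for the statement actually claimed, however, your route is strictly simpler.

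One remark on your final paragraph: you need not worry about extending VRR to arbitrary constructible $S$. Theorem~\ref{thm:existence}(4) already states the equivariant VRR formula for an arbitrary class $[f:Z\to Y]\in G_0^T(var/Y)$, so applying it to $[S\hookrightarrow G/P_i]$ is immediate and requires no further reduction.
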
 

Using the (equivariant) $\Kt$-theoretic Chevalley
formula~\cite{fulton.lascoux,pittie.ram,lenart.postnikov:affine} to
multiply by classes of line bundles, the DL operators give a recursive
formula to calculate the motivic Chern classes, starting from the
class of a point.

Theorem~\ref{thm:introT1} generalizes the analogous result
from~\cite{aluffi.mihalcea:eqcsm} where it was proved that the CSM
classes of Schubert cells are recursively determined by operators in
the degenerate Hecke algebra.  The proof of Theorem~\ref{thm:introT1}
relies on the calculations of push-forwards of classes from the
Bott-Samelson desingularizations of Schubert varieties.

To illustrate the result, we list below the non-equivariant motivic
Chern classes of Schubert cells in $\Fl(3):= \SL_3(\bbC)/B$,
the manifold parametrizing flags in $\bbC^3$.~In this case, the Weyl
group is the symmetric group $S_3$, generated by simple reflections
$s_1$ and $s_2$, and $w_0 = s_1 s_2 s_1$ is the longest element.
{\small
\[ 
\begin{split} 
\MC_y(X(\id)) &= \calO_{\id}; \\ 
\MC_y(X(s_1)^\circ) &= (1+y) \calO_{s_1} - (1+2y) \calO_{\id}\/;\\ 
\MC_y(X(s_2)^\circ) &= (1+y) \calO_{s_2} - (1+2y) \calO_{\id} \/;\\ 
\MC_y(X(s_1 s_2)^\circ) &= (1+y)^2 \calO_{s_1 s_2} - (1+y) (1+2y)
\calO_{s_1} - (1+y)(1+3y) \calO_{s_2} + (5y^2+ 5y+1) \calO_{\id} \/;\\ 
\MC_y(X(s_2 s_1)^\circ) &= (1+y)^2 \calO_{s_2 s_1} - (1+y) (1+2y)
\calO_{s_2} - (1+y)(1+3y) \calO_{s_1} + (5y^2 + 5y+1) \calO_{\id} \/;\\ 
\MC_y(X(w_0)^\circ) &= (1+y)^3 \calO_{w_0} - (1+y)^2 (1+2y)
(\calO_{s_1s_2} + \calO_{s_2 s_1}) + \\ 
& (1+y)(5 y^2 + 4 y +1)(\calO_{s_1 } + \calO_{s_2}) 
- (8y^3+11y^2+ 5y+1) \calO_{\id}\/. 
\end{split}\]}One observes in these examples, and one can also prove it in general,
that the specialization $y \mapsto 0$ in $\MC_y(X(w)^\circ)$ yields the
(push-forward to $G/B$ of the)
ideal sheaf of the boundary of the Schubert variety~$X(w)$. The
Schubert class $\calO_w$ is obtained if one takes $y \mapsto 0$ in a
recursion given by a renormalization of the inverse of the dual operator
$\opT^\vee_i$; see Example~\ref{ex:motcells}. In fact, Theorem~\ref{thm:introT1} generalizes the
well-known fact from Schubert Calculus that the Schubert classes
$\calO_w$ are obtained recursively by the Demazure operators
$\partial_i$. These and other combinatorial properties of the motivic
Chern classes will be studied in a continuation to this paper.

A remarkable feature in the examples listed above is a positivity
property. Based on substantial computer evidence we make the following
positivity conjecture:

\begin{conj}[Positivity Conjecture]\label{conj:csmpos} 
Consider the Schubert expansion 
\[  
\MC_y(X(w)^\circ) =  \sum c(w;u) \calO_u \in K_T(X)[y] \/. 
\]
Then the coefficients $c(w;u) \in K_T(\pt)[y]$ satisfy
$(-1)^{\ell(w)-\ell(u)}c(w,u)\in \bbZ_{\geq 0}[y][e^{-\alpha_1}, \ldots
  , e^{-\alpha_r}]$, where $\alpha_i$ are the positive simple
roots. In particular, in the non-equivariant case,
\[ 
(-1)^{\ell(w)-\ell(u)}c(w;u) \in \bbZ_{\geq 0}[y]  \/.
\]
\end{conj}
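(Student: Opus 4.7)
The plan is to attempt an induction on $\ell(w)$ based on the Demazure--Lusztig recursion of Theorem~\ref{thm:introT1}. The base case $w = \id$ is immediate since $MC_y(X(\id)) = \calO_\id$. For the inductive step, consider $ws_i$ with $\ell(ws_i) > \ell(w)$, and write $c(w;u) = (-1)^{\ell(w)-\ell(u)} d(w;u)$, so that the inductive hypothesis asserts $d(w;u) \in \Z_{\geq 0}[y][e^{-\alpha_1},\ldots,e^{-\alpha_r}]$. Rearranging the recursion as
\[
MC_y(X(ws_i)^\circ) + MC_y(X(w)^\circ) = (1 + y[\calL_{\alpha_i}]) \, \partial_i\, MC_y(X(w)^\circ),
\]
one observes that the predicted signs of $c(ws_i;v)$ and of $-c(w;v)$ both equal $(-1)^{\ell(w)+1-\ell(v)}$, so the two terms on the left-hand side combine with a common sign. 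It then suffices to show that the coefficient of $\calO_v$ on the right-hand side has sign $(-1)^{\ell(w)+1-\ell(v)}$, with the associated positive part a polynomial in $y$ and the $e^{-\alpha_j}$ with non-negative integer coefficients.

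Implementing this requires explicit control of how $\partial_i$ and multiplication by $[\calL_{\alpha_i}]$ transform the Schubert basis. The Demazure operator sends $\calO_u$ to either $\calO_{us_i}$ (when $\ell(us_i) = \ell(u)+1$) or $\calO_u$, so its effect on Schubert lengths, and hence on the predicted signs, is transparent. Multiplication by $[\calL_{\alpha_i}]$ is governed by the equivariant $\Kt$-theoretic Chevalley formula~\cite{fulton.lascoux,pittie.ram,lenart.postnikov:affine}, which is known to exhibit a Graham-type positivity: after an appropriate sign twist, its structure constants are polynomials in the $1 - e^{-\alpha_j}$ with non-negative integer coefficients. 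One would then combine this with the inductive hypothesis and verify combinatorially that the resulting sign pattern aligns with $(-1)^{\ell(ws_i)-\ell(v)}$ and that the positivity in $y$ and in the $e^{-\alpha_j}$ is preserved across the recursion.

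The main obstacle lies in this last combinatorial bookkeeping: the Chevalley product couples Schubert classes of several adjacent lengths, the inductive expression brings in a wide range of lengths, and the positivity alphabet ($e^{-\alpha_j}$ in the conjecture versus $1 - e^{-\alpha_j}$ in known Chevalley positivity) is not manifestly preserved after the rearrangement that absorbs the $-\id$ piece of $\calT_i$. A more geometric alternative would realize $MC_y(X(w)^\circ)$ as the proper push-forward, under a Bott--Samelson resolution $\pi : BS(w) \to X(w)$, of the log Hirzebruch class of the pair $(BS(w), D)$ with $D$ the SNC boundary divisor, and then invoke a geometric positivity principle for such push-forwards in equivariant $\Kt$-theory, in the spirit of Brion's proof of positivity for Schubert structure constants. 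Either route is likely to require substantial additional input, consistent with the statement being proposed here only as a conjecture.
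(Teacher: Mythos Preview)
The statement is a \emph{conjecture} in the paper, not a theorem: the paper offers no proof, only computational evidence (the $\Fl(3)$ examples) and pointers to analogous conjectures and partial results in the CSM setting and in~\cite{FRW:char}. So there is no ``paper's own proof'' to compare against, and your final sentence already acknowledges this.

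Your proposal is best read as a discussion of why the obvious inductive attack fails, and on that level it is accurate. The rearrangement
\[
MC_y(X(ws_i)^\circ) + MC_y(X(w)^\circ) = (1+y[\calL_{\alpha_i}])\,\partial_i\, MC_y(X(w)^\circ)
\]
does make the signs on the left combine correctly, so the problem genuinely reduces to showing the right-hand side has the predicted sign pattern. The two obstacles you name are real. First, the equivariant Chevalley formula for $[\calL_{\alpha_i}]\cdot\calO_u$ produces terms over a range of lengths with coefficients whose known positivity (in the sense of~\cite{anderson.griffeth.miller:positivity}) is in the variables $1-e^{-\alpha_j}$, not in $e^{-\alpha_j}$; there is no obvious way to convert one to the other after the $-\id$ piece has been absorbed. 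Second, even in the non-equivariant case the sign alternation in the Chevalley expansion does not obviously match $(-1)^{\ell(ws_i)-\ell(v)}$ once one composes with $\partial_i$ and sums over all $u$ appearing in $MC_y(X(w)^\circ)$. These are exactly the reasons the statement remains open; the CSM analogue in~\cite{AMSS:shadows} was proved not by such a recursion but via characteristic cycles on $T^*(G/B)$, and no $\Kt$-theoretic substitute for that argument is currently available.
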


In type A, a similar positivity property was also conjectured
in~\cite[\S6]{FRW:char}, along with a log concavity property. In the
non-equivariant case, 
it is conjectured in~\cite{FRW:char} 
that the polynomials
\[ 
(-1)^{\ell(w) - \ell(u)} \frac{ c(w;u)}{(1+y)^{\ell(u)}} 
\] 
are log-concave.~In cohomology, Aluffi and Mihalcea conjectured that CSM
classes of Schubert cells are 
positive~\cite{aluffi.mihalcea:csm,aluffi.mihalcea:eqcsm}.~For 
Grassmannians, this was established
by J. Huh \cite{huh:positivity}; a few special cases were settled
earlier in~\cite{aluffi.mihalcea:csm, mihalcea:binomial,jones:csm,
  stryker:thesis}. In full $G/P$ generality, and in the
non-equivariant case, the conjecture was recently proved
in~\cite{AMSS:shadows}, using the theory of characteristic cycles in
the cotangent bundle of $G/B$. There is also a stronger version of
this conjecture, which claims that in addition $c(w;u)\neq 0$ whenever
$u \le w$. Huh's proof shows this, and also establishes (implicitly)
the equivariant version for Grassmannians. The statement of the
conjecture is reminiscent of the positivity in (equivariant)
$\Kt$-theory proved by Buch~\cite{buch:Kpos}, Brion~\cite{brion:Kpos},
and Anderson, Griffeth and
Miller~\cite{anderson.griffeth.miller:positivity}.

Let $\MC_y^\vee(Y(v)^\circ)$ be the classes obtained by applying the
(inverse) dual operators $(\opT^\vee_i)^{-1}$ to $\calO^{w_0}$
(instead of $\opT_i$ to $\calO_{\id}$); see
Definition~\ref{def:firstdual}. We prove (Theorem~\ref{thm:dualbasis})
that for every $u, v\in W$,
\begin{equation}\label{E:duality} 
\langle \MC_y(X(u)^\circ), \MC_y^\vee(Y(v)^\circ)\rangle 
= \delta_{u,v}(-y)^{\ell(u)-\dim G/B}\prod_{\alpha>0}
(1+ye^{-\alpha}) \/,
\end{equation} 
where $\langle \cdot , \cdot \rangle$ is the $\Kt$-theoretic
intersection pairing. This orthogonality property, which we call
`Hecke duality', mirrors the similar orthogonality of
Chern-Schwartz-MacPherson (CSM) classes proved by the authors
in~\cite{AMSS:shadows}.

\subsection{Applications to Casselman's problem} 
The main application in this paper is to use the Hecke algebra action
on motivic Chern classes of Schubert cells to prove two conjectures of
Bump, Nakasuji and
Naruse~\cite{BN11,bump2017casselman,nakasuji2015yang} about properties
of certain coefficients of the transition matrix between two natural
bases of the Iwahori invariant part of the principal series
representation. We briefly recall below the relevant history,
definitions and the results; the details and complete proofs are given
in \S\ref{sec:padic} below.

Let $\tau$ be an unramified character for a split reductive Chevalley
group $\ChevG(F)$ over a nonarchimedean local field $F$ with finite
residue field $\bbF_{q'}$.  The principal series representation is the
induced representation $I(\tau):=\Ind_{\ChevB(F)}^{\ChevG(F)}
(\tau)$.~We consider its submodule $I(\tau)^I$ of Iwahori invariants
of $I(\tau)$; this is a Hecke module, with an additive basis indexed
by the Weyl group $W$. There are two important bases: the {\em
  standard basis,\/} given by the characteristic
functions~$\varphi_w$, and the {\em Casselman basis\/} $\{ f_w \}$,
defined using certain intertwiners; see~\eqref{E:deffw}. {\em
  Casselman's problem\/}~\cite{casselman1980unramified} is to find the
transition matrix between the two bases:
\begin{equation}\label{E:auvexp}  
\varphi_u = \sum a_{u,w} f_w \/. 
\end{equation}
As observed by Bump and Nakasuji~\cite{BN11}, it is better to consider
the basis $\psi_u := \sum_{w \ge u} \varphi_w$ and the expansion
\[ 
\psi_u = \sum \tilde{m}_{u,w} f_w \/. 
\] 
By M{\"o}bius inversion, the problems of finding either of the
transition matrices are equivalent. Recent solutions to the
Casselman's problem were obtained by Naruse and
Nakasuji~\cite{nakasuji2015yang}, using the Yang-Baxter basis in the
Hecke algebra introduced by Lascoux, Leclerc and
Thibon~\cite{LLT:flagYB}, and by Su, Zhao and Zhong~\cite{su2017k},
by means of the theory of stable envelopes developed
in~\cite{okounkov2015lectures, okounkov2016quantum,AO:elliptic}. The
$\Kt$-theoretic stable envelopes are certain classes in the
equivariant $\Kt$-theory of the cotangent bundle $T^*(G/B)$, indexed
by the Weyl group elements; see \S\ref{s:MCstab} below. Su, Zhao and
Zhong proved that the Hecke algebra action on the basis of stable
envelopes coincides with the Hecke algebra action on the standard
basis $\varphi_w$. Under their correspondence, the Hecke action on the
Casselman's basis fits with the Hecke action on the fixed point basis
in equivariant $\Kt$-theory.

Feh{\'e}r, Rim{\'a}nyi and Weber~\cite{feher2018motivic,FRW:char} 
observed that motivic Chern classes and
$\Kt$-theoretic stable envelopes are closely related; see also
\S\ref{s:MCstab} below. Therefore it is not a surprise that one can
recover the Hecke correspondence from~\cite{su2017k} using motivic
Chern classes. The advantage of this point of view is that motivic
Chern classes satisfy strong functoriality properties, and this will
allow us to obtain additional properties of the coefficients
$\tilde{m}_{u,v}$.

Let $\LangG$ be the Langlands dual of $\ChevG$.
It turns out that the Hecke module $I(\tau)^I$ is more naturally
related to the equivariant $\Kt$-theory $K_{\LangT}(\LangG/\LangB)$
for the Langlands dual flag manifold.
Let $\iota_w$ be the fixed point basis in $K_{\LangT}(\LangG/\LangB)$,
and let $b_w$ be the multiple of $\iota_w$ determined by the
localization 
\[
b_w|_w =\MC_{y}^\vee(Y(w)^\circ)|_w
\]
at the fixed point $e_w$. The formula for $\MC_{y}^\vee(Y(w)^\circ)|_w
\in K_{\LangT}(\pt)[y^{-1}]$ is explicit; see
Proposition~\ref{prop:chardualmot}. From this formula it follows that
the elements $b_w$ are in the localized ring
\[
K_{\LangT}(\LangG/\LangB)_{\loc}[y^{-1}]:= 
K_{\LangT}(\LangG/\LangB)[y^{-1}] \otimes_{K_{\LangT}(\pt)} 
\Frac(K_{\LangT}(\pt))\:,
\] 
where $\Frac$ denotes the fraction field.  We show 
(Theorem~\ref{thm:padic})
that for a
sufficiently general~$\tau$ there is an isomorphism of Hecke modules
$\Psi: K_{\LangT}(\LangG/\LangB)_{\loc}[y,y^{-1}]
\otimes_{K_{\LangT}(\pt)} \bbC_\tau \to I(\tau)^I$ such that
\[ 
\Psi (\MC_y^\vee(Y(w)^\circ) \otimes 1) =  \varphi_w; \quad 
\Psi(b_w \otimes 1)=  f_w; \quad \Psi(y) = -q' \/, 
\]
with $q'=|\bbF_{q'}|$ the number of elements in the finite residue
field $\bbF_{q'}$, and $\bbC_\tau$ the one-dimensional
$K_{\LangT}(\pt)$-module obtained by evaluation at $\tau$. Using this
result, we prove that $\tilde{m}_{u,w} = m_{u,w}(\tau)$, where
$m_{u,w}$ are the coefficients in the expansion
\[ 
\MC_y^\vee(Y(u)) := \sum_{w\geq u} \MC_y^\vee(Y(w)^\circ) 
= \sum m_{u,w} b_w \quad \in  
K_{\LangT}(\LangG/\LangB)_{\loc}[y^{-1}] \/.
\] 
Implicit in this is that the coefficients $m_{u,w}$ may be regarded as
complex valued functions defined on a certain Zariski open subset of
the dual torus $\LangT$.

The Hecke isomorphism $\Psi$ provides a `dictionary', translating all
statements about $\tilde{m}_{u,w}$ into statements about $m_{u,w}$,
which have geometric meaning. The key result for the
representation theoretic applications is that the coefficients
$m_{u,w}$ are given by localization
(cf.~Proposition~\ref{prop:newformula} below):

\begin{theorem}\label{thm:newformula} 
(a) For every $w\geq u\in W$, the coefficient  $m_{u,w}$ equals 
\[ 
m_{u,w} = \Bigl(\frac{\MC_{y}(Y(u))|_w}{\MC_{y}(Y(w)^\circ)|_w} 
\Bigr)^\vee \/, 
\] 
where $\vee$ is the operator mapping $e^\lambda \mapsto e^{-\lambda}$
for $e^\lambda \in K_T(\pt)$ and $y \mapsto y^{-1}$.

(b) Assume that $Y(u)$ is smooth at the fixed point $e_w$ and denote
by $(N_{Y(w)} Y(u))_w$ the normal space at $e_w$ in $Y(u)$, regarded
as a trivial (but not equivariantly trivial) vector bundle. Then
\[
m_{u,w} = \frac{ \lambda_{y^{-1}} ((N_{Y(w)} Y(u))_w)}
{\lambda_{-1} ({(N_{Y(w)} Y(u))_w})} \/. 
\]
\end{theorem}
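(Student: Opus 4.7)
The plan is to prove (a) via equivariant localization at the fixed point $e_w$, and then derive (b) by a direct local computation of the two motivic Chern classes involved, exploiting the smoothness hypothesis.

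For (a), I would localize the defining expansion $MC_y^\vee(Y(u)) = \sum_{v\geq u} m_{u,v} b_v$ at $e_w$. Each $b_v$ is by construction a scalar multiple of the fixed-point basis element $\iota_v$, so all terms with $v \neq w$ vanish under this localization. Together with the normalization $b_w|_w = MC_y^\vee(Y(w)^\circ)|_w$, this immediately yields $m_{u,w} = MC_y^\vee(Y(u))|_w/MC_y^\vee(Y(w)^\circ)|_w$. To rewrite this in terms of $MC_y$, I would invoke the identity $MC_y^\vee(\Omega)|_w = (MC_y(\Omega)|_w)^\vee$ for both $\Omega = Y(u)$ and $\Omega = Y(w)^\circ$; this is essentially the content of Proposition~\ref{prop:chardualmot} and conceptually reflects that the operators $\calT_i^\vee$ are obtained from $\calT_i$ via the $\vee$-involution after an adjustment matching the initial classes $\calO_{\id}$ and $\calO^{w_0}$.

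For (b), I would combine (a) with direct computations of $MC_y(Y(u))|_w$ and $MC_y(Y(w)^\circ)|_w$. Since $Y(u)$ is smooth at $e_w$, additivity of $MC_y$ and the observation that the singular locus of $Y(u)$ is disjoint from a neighborhood of $e_w$ reduce the calculation to the smooth closed subvariety case: the normalization property of $MC_y$ together with the self-intersection (Koszul) formula in equivariant $\Kt$-theory yields
\[
MC_y(Y(u))|_w = \lambda_y(T^*_w Y(u)) \cdot \lambda_{-1}(N^*_{Y(u)/X,\,w})\/.
\]
An analogous formula holds for the cell $Y(w)^\circ$, which coincides near $e_w$ with its smooth closure $Y(w)$ (since $Y(w)$ is smooth along $Y(w)^\circ$ and $\partial Y(w)$ does not meet $e_w$). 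The inclusion chain $Y(w)^\circ \subset Y(u) \subset X$ at $e_w$ gives short exact sequences of tangent and normal spaces, producing the splittings $T_wY(u) = T_wY(w)^\circ \oplus (N_{Y(w)}Y(u))_w$ and $N^*_{Y(w)^\circ/X,\,w} = N^*_{Y(u)/X,\,w} \oplus (N_{Y(w)}Y(u))^*_w$. Substituting, the $T^*_wY(w)^\circ$ and $N^*_{Y(u)/X,\,w}$ factors cancel, leaving
\[
\frac{MC_y(Y(u))|_w}{MC_y(Y(w)^\circ)|_w} = \frac{\lambda_y((N_{Y(w)}Y(u))^*_w)}{\lambda_{-1}((N_{Y(w)}Y(u))^*_w)}\/.
\]
The $\vee$-involution, which by direct inspection of weights sends $\lambda_y(N^*)$ to $\lambda_{y^{-1}}(N)$ and $\lambda_{-1}(N^*)$ to $\lambda_{-1}(N)$, then converts this expression into the asserted formula.

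The main obstacle is establishing the intertwining $MC_y^\vee(\Omega)|_w = (MC_y(\Omega)|_w)^\vee$ cleanly for both the closed variety $Y(u)$ (viewed as a sum over cells $Y(v)^\circ$ with $v\geq u$) and the single cell $Y(w)^\circ$, and checking that the normalizations on both sides are compatible. A secondary subtlety in (b) is justifying that the local closed-embedding formula for $MC_y$ applies at a smooth point of a globally singular variety; this is handled by appealing to motivic additivity and the Verdier-type compatibility of $MC_y$ under restriction to a smooth open neighborhood of $e_w$.
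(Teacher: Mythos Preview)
Your overall strategy for both (a) and (b) matches the paper's: localize the expansion at $e_w$ to get $m_{u,w}=MC_y^\vee(Y(u))|_w/MC_y^\vee(Y(w)^\circ)|_w$, then rewrite in terms of ordinary motivic Chern classes; for (b), compute each localization via the self-intersection formula at a smooth point and cancel via the obvious short exact sequences. Part (b) is essentially identical to the paper's argument (which packages the local computation as Theorem~\ref{thm:MCsmooth}).

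The gap is in the step you flag as ``the main obstacle.'' The identity you invoke, $MC_y^\vee(\Omega)|_w=(MC_y(\Omega)|_w)^\vee$, is \emph{false} as stated. You can check this already at the diagonal: compare Proposition~\ref{prop:chardualmot}(b) with the $\vee$ of Proposition~\ref{prop:charmotoppo}(b); the two expressions differ by a nontrivial factor. More structurally, if you apply $\vee$ to the recursion in Proposition~\ref{prop:charmotoppo}(c) and compare with Proposition~\ref{prop:chardualmot}(c), the coefficients of the $us_i$-term do not agree, so the two families cannot be equal. Your heuristic that ``$\calT_i^\vee$ is obtained from $\calT_i$ via the $\vee$-involution'' is not correct: $\calT_i^\vee$ is the \emph{adjoint} of $\calT_i$, which is a different operation.

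What is true, and what the paper uses, is the weaker statement
\[
MC_y^\vee(Y(v)^\circ)|_w \;=\; K(w)\cdot\bigl(MC_y(Y(v)^\circ)|_w\bigr)^\vee
\]
for a factor $K(w)$ depending only on the fixed point $e_w$, not on $v$. This is exactly what falls out of localizing Theorem~\ref{thm:dual1} at $e_w$: there $MC_y^\vee(Y(v)^\circ)=\prod_{\alpha>0}(1+ye^{-\alpha})\cdot \calD(MC_y(Y(v)^\circ))/\lambda_y(T^*(G/B))$, and after restricting to $e_w$ the prefactor and the contribution of $\calD$ (namely $(-1)^{\dim G/B}e^{2w\rho}$ times $\vee$) are independent of $v$. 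These common factors then cancel in the ratio, giving (a). The proof of Theorem~\ref{thm:dual1} is not elementary: it goes through the identification of motivic Chern classes with $\Kt$-theoretic stable envelopes and the stable-envelope duality on $T^*(G/B)$. So your proposal is on the right track, but the missing ingredient is precisely this ``second orthogonality,'' and Proposition~\ref{prop:chardualmot} alone does not supply it.
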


In particular, the entries 
$m_{1,w}$
are obtained from the motivic
Chern class of the full flag variety $\MC_y(Y(\id))=\MC_y(G/B) =
\lambda_y(T^*(G/B))$, and one recovers the (geometric version of the)
classical Gindikin-Karpelevich formula, proved by
Langlands~\cite{langlands}:
\[ 
m_{1,w} = \prod \frac{1 +y^{-1} e^{\alpha}}{1 - e^{\alpha}} \/, 
\] 
where the product of over positive roots $\alpha$ such that
$w^{-1}(\alpha) < 0$. Let
\[ 
S(u,w):=\{\alpha \in R^{+}\,|\,u\leq s_\alpha w < w\} \/. 
\]
Our main application is the following factorization formula for
$m_{u,v}$, see Theorem~\ref{thm:geomrefinedconj}
below:

\begin{theorem}[Geometric Bump-Nakasuji-Naruse Conjecture] 
\label{thm:introgeomrefinedconj} 
For every $u\leq w\in W$,
\[
m_{u,w}=\prod_{\alpha\in S(u,w)}\frac{1+y^{-1}e^\alpha}{1-e^\alpha},
\]
if and only if the Schubert variety $Y(u)$ is smooth at the torus
fixed point $e_{w}$.
\end{theorem}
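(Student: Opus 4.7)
The plan is to deduce both directions from Theorem~\ref{thm:newformula}. The if direction follows from part (b) after a $T$-equivariant identification of the normal space $(N_{Y(w)}Y(u))_w$; the only if direction follows from part (a) combined with a degree count in $y$ and Kumar's smoothness criterion for Schubert varieties.

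For the if direction, assume $Y(u)$ is smooth at $e_w$. The $T$-stable curves of $G/B$ through $e_w$ are parametrized by $\alpha \in R^+$: to each $\alpha$ one associates the unique $T$-stable $\mathbb{P}^1$ joining $e_w$ and $e_{s_\alpha w}$, whose $T$-weight at $e_w$ is $+\alpha$ if $s_\alpha w < w$ (equivalently $w^{-1}\alpha < 0$) and $-\alpha$ otherwise. Such a curve is contained in $Y(u)=\overline{B^-uB/B}$ if and only if $s_\alpha w \ge u$. The smoothness hypothesis, via Kumar's criterion, implies that $T_{e_w}Y(u)$ is spanned by the tangents of these $T$-stable curves. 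Splitting according to containment in $Y(w)$: the curves inside $Y(w)$ correspond to $\{\alpha\in R^+: s_\alpha w > w\}$ and span $T_{e_w}Y(w)$, while the remaining curves correspond to $\{\alpha\in R^+: u \le s_\alpha w < w\}=S(u,w)$ and carry $T$-weight $+\alpha$. Hence $(N_{Y(w)}Y(u))_w \cong \bigoplus_{\alpha\in S(u,w)}\mathbb{C}_\alpha$, and Theorem~\ref{thm:newformula}(b) yields
\[
m_{u,w} = \frac{\lambda_{y^{-1}}((N_{Y(w)}Y(u))_w)}{\lambda_{-1}((N_{Y(w)}Y(u))_w)} = \prod_{\alpha\in S(u,w)}\frac{1+y^{-1}e^\alpha}{1-e^\alpha}.
\]

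For the only if direction, assume the product formula holds. Theorem~\ref{thm:newformula}(a), after applying $\vee$, gives
\[
MC_y(Y(u))|_w = MC_y(Y(w)^\circ)|_w \cdot \prod_{\alpha\in S(u,w)}\frac{1+ye^{-\alpha}}{1-e^{-\alpha}}.
\]
Since $Y(w)^\circ$ is smooth and coincides with $Y(w)$ in a $T$-stable neighborhood of $e_w$, a local application of the smooth formula for motivic Chern classes gives
\[
MC_y(Y(w)^\circ)|_w = \prod_{\beta\in R^+,\,w^{-1}\beta>0}(1+ye^\beta)\cdot\prod_{\gamma\in R^+,\,w^{-1}\gamma<0}(1-e^{-\gamma}),
\]
once one identifies the weights of $T_{e_w}Y(w)$ and of the conormal of $Y(w)$ in $G/B$ from the $T$-stable curve description. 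Since $S(u,w)\subseteq\{\gamma\in R^+: w^{-1}\gamma<0\}$, each denominator $(1-e^{-\alpha})$ cancels a corresponding factor in $MC_y(Y(w)^\circ)|_w$, producing a polynomial in $y$ of exact degree $(\ell(w_0)-\ell(w))+|S(u,w)|$ with nonzero leading coefficient in $K_T(pt)$. By the general property that $MC_y[Z\to X]$ is a polynomial in $y$ of degree at most $\dim Z$, one has $\deg_y MC_y(Y(u))|_w \le \dim Y(u)=\ell(w_0)-\ell(u)$. Combining yields $|S(u,w)| \le \ell(w)-\ell(u)$. The reverse inequality $|S(u,w)| \ge \ell(w)-\ell(u)$ always holds, with equality if and only if $Y(u)$ is smooth at $e_w$ (Kumar's criterion in its opposite-Schubert formulation). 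Hence $Y(u)$ is smooth at $e_w$, as required.

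The main obstacle is the local computation of $MC_y(Y(w)^\circ)|_w$, which relies on the smoothness of $Y(w)^\circ$ near $e_w$ together with a base-change/excision principle for equivariant $K$-theoretic localization; once that formula is in hand, the degree comparison combined with Kumar's criterion finishes the only if direction with no further work.
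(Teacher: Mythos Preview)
Your ``if'' direction is correct and matches the paper's argument: use part~(b) of Theorem~\ref{thm:newformula} together with the weight description of $(N_{Y(w)}Y(u))_w$ coming from Kumar's criterion.

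Your ``only if'' direction, however, has a genuine gap. The degree count in $y$ gives you only the numerical equality $|S(u,w)| = \ell(w) - \ell(u)$, and you then assert that this equality is equivalent to smoothness of $Y(u)$ at $e_w$ ``by Kumar's criterion''. That is not what Kumar's criterion says. Kumar's criterion (Theorem~\ref{thm:kumar}) is a statement about the precise form of the equivariant fundamental class $[Y(u)]|_w$, not about a cardinality. The equality $|S(u,w)| = \ell(w) - \ell(u)$ is Deodhar's inequality saturated at the single point $e_w$; it is implied by rational smoothness at $e_w$ (via Carrell--Peterson), but it does \emph{not} imply smoothness. Indeed, in non-simply-laced types there exist Schubert varieties that are rationally smooth but singular at a fixed point; for such a pair $(u,w)$ one has $|S(u,w)| = \ell(w)-\ell(u)$ while $Y(u)$ is singular at $e_w$. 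This is precisely the distinction that forces the statement of Theorem~\ref{thm:introgeomrefinedconj} to be phrased in terms of smoothness rather than in terms of Kazhdan--Lusztig polynomials; cf.\ the discussion after Corollary~\ref{cor:BNconj}.

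The fix is nearby and you have essentially already carried it out. After applying $(-)^\vee$ to Theorem~\ref{thm:newformula}(a) and substituting the explicit formula for $MC_y(Y(w)^\circ)|_w$, your computation in fact produces
\[
MC_y(Y(u))|_w \;=\; \prod_{\alpha>0,\,ws_\alpha\ge u}(1+ye^{w\alpha})\;\prod_{\alpha>0,\,u\nleq ws_\alpha}(1-e^{w\alpha}),
\]
which is exactly the right-hand side of the smoothness criterion in Theorem~\ref{thm:MCsmooth}. Rather than reading off only the $y$-degree, set $y=0$ (using $MC_0(Y(u))=\calO^u$, since Schubert varieties have rational singularities) and pass to the leading term via the equivariant Chern character to obtain $[Y(u)]|_w = \prod_{\beta>0,\,u\nleq s_\beta w}\beta$. \emph{That} is Kumar's criterion, and it gives smoothness. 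This is exactly the route the paper takes, packaged as Theorem~\ref{thm:MCsmooth}.
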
 

This is the geometric analogue of a conjecture of Bump and
Nakasuji~\cite{BN11,bump2017casselman} for simply laced types,
generalized to all types by Naruse~\cite{naruse2014schubert}, and
further analyzed by Nakasuji and Naruse~\cite{nakasuji2015yang}.
While this paper was in preparation, Naruse informed us that he also
obtained an (unpublished) proof of the implication assuming
factorization. Both Naruse's and our proofs are based on Kumar's
cohomological criterion for smoothness of Schubert
varieties~\cite{kumar1996nil}; Naruse used Hecke algebra calculations;
ours relies on properties of motivic Chern classes. The original
conjecture of Bump and Nakasuji from~\cite{BN11} was stated in terms
of conditions under which certain Kazhdan-Lusztig polynomials $P_{w_0
  w^{-1}, w_0 u^{-1}}$ {equal} $1$; we explain the equivalence to the
statement above (in simply laced types) and discuss further this
conjecture in sections~\S\ref{sec:geomBNNconj} and
\S\ref{sec:BNNconj}.

A second conjecture refers to a holomorphy property. In relation to
Kazhdan-Lusztig theory, Bump and Nakasuji~\cite{bump2017casselman}
defined the coefficients
\[ 
\tilde{r}_{u,w} := \sum_{u \le x \le w} (-1)^{\ell(x) 
- \ell(u)} \overline{\tilde{m}}_{x,w} \/, 
\] 
where the bar operator replaces $q'$ by $q'^{-1}$. Using
M{\"o}bius inversion it follows that the coefficients $a_{u,w}$ 
from~\eqref{E:auvexp} satisfy $\bar{a}_{u,w}=
\tilde{r}_{u,w}$. Geometrically, 
these correspond to the coefficients
$r_{u,w}$ obtained from the expansion
\[ 
\MC_y^\vee(Y(u)^\circ) = \sum \overline{r}_{u,w} b_w \quad 
\in  K_T(G/B)_{\loc}[y^{-1}] \/, 
\] 
where $\bar{f}(y) := f(y^{-1})$. We prove the following result 
(cf.~Theorem~\ref{thm:hol}), which answers affirmatively
Conjecture~$1$ from~\cite{bump2017casselman}.
\begin{theorem}\label{thm:divintro} 
Let $u \le w$ be two Weyl group elements. Then the functions
\[ 
\prod_{\alpha \in S(u,w)}(1-e^{\alpha})r_{u,w} \quad , 
\prod_{\alpha \in S(u,w)}(1-e^{\alpha})m_{u,w} 
\]
are holomorphic on the torus $T$.
\end{theorem}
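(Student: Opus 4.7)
The plan is to reduce both statements to a GKM-type divisibility for localizations of motivic Chern classes. By Theorem~\ref{thm:newformula}(a),
\[
m_{u,w} = \Bigl(\frac{MC_y(Y(u))|_w}{MC_y(Y(w)^\circ)|_w}\Bigr)^\vee.
\]
Since $Y(w)^\circ$ is smooth with $e_w \in Y(w)^\circ$, the standard localization formula yields (cf.~Proposition~\ref{prop:chardualmot})
\[
MC_y(Y(w)^\circ)|_w = \prod_{\beta>0,\, w^{-1}\beta>0}(1 + y e^\beta) \cdot \prod_{\delta>0,\, w^{-1}\delta<0}(1 - e^{-\delta}),
\]
where the second product arises from the Koszul resolution of the normal bundle. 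The $(1 + y e^\beta)$ factors also appear as factors of $MC_y(Y(u))|_w$ for any $u \le w$, as the cotangent directions to $Y(w)^\circ$ at $e_w$ are a sub-collection of cotangent directions to any Schubert variety through $e_w$; these therefore cancel in the ratio. After applying $\vee$, the only candidate poles of $m_{u,w}$ on $T$ thus lie along the hypertori $\{e^\alpha = 1\}$ for positive roots $\alpha$ with $s_\alpha w < w$ (equivalently, $w^{-1}\alpha < 0$). Comparing with $S(u,w) = \{\alpha > 0 : s_\alpha w < w,\ u \le s_\alpha w\}$, it suffices to prove: for every $\alpha > 0$ with $s_\alpha w < w$ and $u \not\le s_\alpha w$, the factor $(1-e^{-\alpha})$ divides $MC_y(Y(u))|_w$ in $R(T)[y]$.

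This divisibility follows from the GKM description of $K_T(G/B)$. The fixed points $e_w$ and $e_{s_\alpha w}$ are joined by a $T$-invariant $\mathbb{P}^1$ with tangent character at $e_w$ proportional to $\alpha$; hence for every $T$-equivariant class $\xi \in K_T(G/B)[y]$,
\[
\xi|_{e_w} \equiv \xi|_{e_{s_\alpha w}} \pmod{(1-e^{-\alpha})}.
\]
Applied to $\xi = MC_y(Y(u))$: because $u \not\le s_\alpha w$, the point $e_{s_\alpha w}$ does not lie in $Y(u)$, and the support property of motivic Chern classes of constructible subsets gives $MC_y(Y(u))|_{e_{s_\alpha w}}=0$. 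Hence $(1-e^{-\alpha})$ divides $MC_y(Y(u))|_{e_w}$. The involution $\vee$ sends $(1-e^{-\alpha})$ to a unit multiple of $(1-e^\alpha)$, cancelling the offending denominator factor and proving the statement for $m_{u,w}$.

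For $r_{u,w}$ I reduce to the above via M\"obius inversion on the Bruhat order. From the defining relation $MC_y^\vee(Y(u)) = \sum_{v\ge u} MC_y^\vee(Y(v)^\circ)$ and the classical formula $\mu(u,v)=(-1)^{\ell(v)-\ell(u)}$ for the Bruhat M\"obius function, one obtains $MC_y^\vee(Y(u)^\circ) = \sum_{v\ge u}(-1)^{\ell(v)-\ell(u)}MC_y^\vee(Y(v))$. Expanding in the $b_w$-basis and applying the bar operator yields
\[
r_{u,w} = \sum_{u\le v\le w}(-1)^{\ell(v)-\ell(u)}\, \bar{m}_{v,w}.
\]
If $u\le v\le w$ and $v\le s_\alpha w$, then $u\le s_\alpha w$, so $S(v,w)\subseteq S(u,w)$. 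By the first part, $\prod_{\alpha\in S(v,w)}(1-e^\alpha) m_{v,w}$ is holomorphic on $T$, hence so is $\prod_{\alpha\in S(u,w)}(1-e^\alpha)\bar{m}_{v,w}$ (the bar operator only inverts $y$ and leaves the torus variables unchanged). Summing over $v$ gives the claim for $r_{u,w}$.

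The main obstacle is verifying the cancellation of the $(1+y e^\beta)$ factors in the denominator's formula: one must confirm, via Proposition~\ref{prop:chardualmot} and the support properties of $MC_y(Y(u))$, that these are absorbed by matching factors in the numerator, so that the only candidate poles on $T$ come from the $(1-e^\alpha)$ factors. Given this, and the standard GKM structure of $G/B$ in equivariant $K$-theory, the rest of the argument combines cleanly, paralleling the authors' earlier treatment of the analogous CSM statements in~\cite{AMSS:shadows}.
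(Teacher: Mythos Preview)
Your strategy is essentially the paper's, with two differences worth noting. First, you reverse the order: you handle $m_{u,w}$ directly and then deduce the claim for $r_{u,w}$ from the inclusion $S(v,w)\subseteq S(u,w)$, whereas the paper first uses M\"obius inversion to rewrite $(\overline{r}_{u,w})^\vee=\dfrac{MC_y(Y(u)^\circ)|_w}{MC_y(Y(w)^\circ)|_w}$ (the \emph{cell}, not the variety) and then deduces the $m_{u,w}$ case by citing Bump--Nakasuji. Both reductions are valid; working with the single cell makes the denominator analysis slightly cleaner. Second, the GKM/localization argument you give for the $(1-e^{\alpha})$ divisibility is exactly the argument in the paper's Theorem~\ref{thm:mcdiv}, so on that point the two proofs coincide.

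The one genuine gap is the step you yourself flag: the cancellation of the $\prod_{\beta>0,\,w^{-1}\beta>0}(1+ye^{\beta})$ factors. Your justification (``the cotangent directions to $Y(w)^\circ$ at $e_w$ are a sub-collection of cotangent directions to any Schubert variety through $e_w$'') is only a heuristic; $Y(u)$ can be singular at $e_w$, so there is no tangent/cotangent decomposition to invoke, and Proposition~\ref{prop:chardualmot} (which concerns the dual classes) is not the right reference. What is actually needed is the first half of Theorem~\ref{thm:mcdiv}: for every $u\le w$, $MC_y(Y(u)^\circ)|_w$ is divisible by $\lambda_y(T^*_wY(w)^\circ)=\prod_{\alpha>0,\,w\alpha>0}(1+ye^{w\alpha})$. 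This is a nontrivial structural fact about motivic Chern classes, imported in the paper from \cite[Theorem~5.3(ii)]{feher2018motivic}; once you have it (for cells, hence by summation for $MC_y(Y(u))$), your argument goes through and matches the paper's.
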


Both Theorems~\ref{thm:introgeomrefinedconj} and~\ref{thm:divintro}
are consequences of Theorem~\ref{thm:newformula}. The proof of the
latter requires a second orthogonality property between motivic Chern
classes and their duals, proven by means of the connection with the
the theory of $\Kt$-theoretic
stable envelopes. From this orthogonality we deduce the following key
formula, proved in Theorem~\ref{thm:dual1}:
\[ 
\MC_y^\vee(Y(u)^\circ) = \prod_{\alpha>0}(1+ye^{-\alpha}) 
\frac{\calD(\MC_y(Y(u)^\circ))}{\lambda_y(T^*(G/B))}  
\] 
as elements in the appropriate localized $\Kt$-theory ring,
where $\calD[E]= (-1)^{\dim X} [E^\vee] \otimes [\wedge^{\dim X}
  T^*(X)]$ is the (equivariant) Grothendieck-Serre duality
operator, with $X=G/B$.\begin{footnote}{The class
    $\frac{\calD(\MC_y(\Omega))}{\lambda_y(T^*(G/B))}$ may be regarded
    as the motivic analogue of the Segre-MacPherson class
    $\frac{c_*(\one_\Omega)}{c(T(G/B))}$,
cf.~Definition~\ref{def:dualmotvariety}.}\end{footnote}
The proof requires a precise relationship between the motivic Chern classes and
stable envelopes. If $\iota: X \to T^*_X$ is the zero section, then
our statement is that (roughly)
\begin{equation}\label{E:MCvsstab} 
\calD(\iota^*(\stab_+(w))) = N(q) \MC_{-q^{-1}}(X(w)^\circ) \/, 
\end{equation} 
where $\stab_+(w)$ is a stable envelope, $N(q)$ is a normalization
parameter, and $q=-y^{-1}$ is determined from the dilation action of
$\bbC^*$ on the fibers of the cotangent bundle. The precise statement is
given in Theorem~\ref{thm:poshriek}.

Formula \eqref{E:MCvsstab} is part of a more general paradigm,
stemming from the classical works of Sabbah~\cite{sabbah:quelques} and
Ginzburg~\cite{ginzburg:characteristic}, relating intersection theory
on the cotangent bundle to that of characteristic classes of singular
varieties. For instance, the (co)homological analogues of the motivic
Chern classes of Schubert cells---the CSM classes---are equivalent to
Maulik and Okounkov's {\em cohomological stable
  envelopes}~\cite{maulik.okounkov:quantum}. This statement, observed
by Rim{\'a}nyi and Varchenko~\cite{rimanyi.varchenko:csm}, and by the
authors in~\cite{AMSS:shadows}, is a consequence of the fact that both
the stable envelopes and the CSM classes are determined by certain
interpolation conditions obtained from equivariant localization;
cf.~Weber's article~\cite{Weber}. The relation to stable envelopes was
recently extended to $\Kt$-theory by Feh{\'e}r, Rim{\'a}nyi and
Weber~\cite{feher2018motivic} (see also~\cite{FRW:char}). They showed
that the motivic Chern classes of the Schubert cells satisfy the same
localization conditions as the $\Kt$-theoretic stable envelopes
appearing in papers by Okounkov and
Smirnov~\cite{okounkov2015lectures, okounkov2016quantum} for a
particular choice of parameters. (The result
from~\cite{feher2018motivic} is more general, involving the motivic
Chern classes for orbits in a space with finitely many orbits under a
group action.) We reprove this result by comparing the
Demazure-Lusztig type recursions for motivic Chern classes to the
recursions for the stable envelopes found by Su, Zhao and Zhong
in~\cite{su2017k}. We also discuss the relation between the motivic
Chern class and various choices of parameters for the $\Kt$-theoretic
stable envelopes, which might be of independent interest;
see~\S\ref{s:MCstab} below.\smallskip

{\em Acknowledgments.} 
Part of this work was performed at MSRI during
the special semester in `Enumerative geometry beyond numbers'; LM and
CS would like to thank the organizers of the workshops of the special
MSRI semester for providing support to participate to this program,
and to MSRI for providing a stimulating environment.~LM would like to
thank Anders Buch, Daniel Orr, Richard Rim{\'a}nyi, Mark Shimozono and
Andrzej Weber for stimulating discussions.~CS thanks IHES for
providing excellent research environment and Joel Kamnitzer, Shrawan
Kumar, Andrei Okounkov and Eric Vasserot for useful
discussions. 
Special thanks are due to Hiroshi Naruse for informing us
of the refined Bump-Nakasuji-Naruse conjecture from
Theorem~\ref{thm:refinedconj}. This happened while CS and LM were
attending the `International Festival in Schubert Calculus',
Guangzhou, China, November 2017; we wish to thank the organizers of
this conference for providing support and an environment conducive to
research. PA also thanks the University of Toronto for the hospitality.
 Finally, all authors wish to thank the anonymous referees, for the careful 
reading and for many suggestions which improved the exposition and helped streamline 
some of the arguments.

\section{Schubert varieties and their Bott-Samelson resolutions}\label{s:BS}
In this section we recall the basic definitions and facts about the
Bott-Samelson resolution of Schubert varieties. These will be used in
the next section to perform the calculation of the motivic Chern class
of a Schubert cell. Our main references
are~\cite{aluffi.mihalcea:eqcsm} and~\cite{brion.kumar:frobenius}.

Let $G$ be a complex semisimple linear algebraic group. Fix a Borel 
subgroup $B$
and let $T := B \cap B^-$ be the maximal torus, where $B^-$ denotes
the opposite Borel subgroup. Let $W:=N_G(T)/T$ be the Weyl group, and
$\ell:W \to \bbN$ the associated length function. Denote by
$w_0$ the longest element in $W$; then $B^- = w_0 B w_0$. Let also
$\Delta := \{ \alpha_1, \ldots , \alpha_r \} \subseteq R^+$ denote the
set of simple roots included in the set of positive roots for
$(G,B)$. The simple reflection for the root $\alpha_i \in \Delta$ is
denoted by $s_i$, and $P_i$ denotes the corresponding {\em minimal\/}
parabolic subgroup.

Let $X:=G/B$ be the flag variety. It has stratifications by Schubert
cells $X(w)^\circ:= BwB/B$ and by opposite Schubert cells
$Y(w)^\circ:=B^- w B/B$.  The closures $X(w):= \overline{X(w)^\circ}$
and $Y(w):=\overline{Y(w)^\circ}$ are the Schubert varieties.~Note
that $Y(w)=w_0 X(w_0w)$.~With these definitions, $\dim_{\bbC} X(w) =
\codim_{\bbC} Y(w) = \ell(w)$. The Weyl group $W$ admits a partial
ordering, called the Bruhat ordering, defined by $u \le v$ if and only
if $X(u) \subseteq X(v)$.

We recall next the definition of the Bott-Samelson resolution of a
Schubert variety, following~\cite[\S2.3]{aluffi.mihalcea:eqcsm}
and~\cite[\S2.2]{brion.kumar:frobenius}.  Fix $w \in W$ and a 
decomposition of $w$, i.e., a sequence $(i_1, \ldots , i_k)$ such that
$w=s_{i_1} \cdot \ldots \cdot s_{i_k}$.
If $\ell(w) = k$, then this decomposition is called {\em reduced.\/}
This data determines a tower $Z$ of $\bbP^1$-bundles and a
birational map $\theta: Z \to X(w)$ as follows.

If the word is empty, then define $Z:= \pt=B/B\hookrightarrow G/B$. 
In general assume we have
constructed $Z':= Z_{i_1, \dots, i_{k-1}}$ and the map $\theta': Z'
\to X(w')\to G/B$, for $w'= s_{i_1} \cdots s_{i_{k-1}}$.

Define $Z=Z_{i_1,\dots, i_k}$ so that the diagram
\[
\xymatrix@R=10pt@C=10pt{
Z \ar[rr]^-\theta \ar[dd]_-\pi & & G/B \ar[dd]^-{p_{i_k}} \\
& \square
\\
Z' \ar[rr]^-{p_{i_k}\circ\, \theta'} & & G/P_{i_k}
}
\]
is a fiber square; the morphism $p_{i_k}$ is the natural projection.
In fact, $p_{i_k}: G/B \to G/P_{i_k}$ is the projectivization of a 
homogeneous rank-$2$ vector bundle, hence so is $\pi: Z\to Z'$.
From this construction it follows that $Z$ is a smooth
projective variety of dimension~$k$.

The Bott-Samelson variety $Z$ is equipped with a simple normal
crossing (SNC) divisor~$\partial Z $, constructed inductively as follows.  If
$Z=\pt$, then $\partial Z=\emptyset$. In general,
the map $\pi$ admits a section $\sigma$, defined as
the unique map $Z'\to Z$ making the following diagram commute:
\[
\xymatrix@C=30pt{
Z' \ar@{.>}[dr]|-{\exists!\sigma} \ar@/^1pc/[drr]^{\theta'} \ar@/_1pc/[ddr]_{\id_{Z'}} \\
& Z \ar[r]^-\theta \ar[d]_-\pi & G/B \ar[d]^-{p_{i_k}} \\
& Z' \ar[r]^-{p_{i_k}\circ\, \theta'} & G/P_{i_k}
}
\]
In particular, $\theta'=\theta\circ \sigma$.
We let $D_k:=\sigma(Z')$, and then
the SNC divisor on $Z$ is defined by
\[ 
\partial Z = \pi^{-1}(\partial Z') \cup D_k 
\] 
where $\partial Z'$ is the inductively constructed SNC divisor on $Z'$.
The following result is well known, see e.g.,
\cite[\S2.2]{brion.kumar:frobenius}. 

\begin{prop}\label{prop:resolution} 
If $s_{i_1} \ldots s_{i_k}$ is a {\em reduced\/} word for $w$, then
the image of the composition $\theta= \pr_1 \circ\, \theta_1:
Z_{i_1,\dots,i_k} \to G/B$ is the Schubert variety $X(w)$. Moreover,
$\theta^{-1}(X(w) \smallsetminus X(w)^\circ) = \partial Z_{i_1,\dots,i_k}$
and the restriction map
\[ 
\theta: Z_{i_1, \dots,i_k} \smallsetminus \partial Z_{i_1,\dots,i_k}
\to X(w)^\circ 
\] 
is an isomorphism. 
\end{prop}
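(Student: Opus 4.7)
I would proceed by induction on $k$, proving all three assertions of the proposition simultaneously: (i) $\theta(Z)=X(w)$; (ii) the restriction $\theta|_{Z\setminus D_Z}\colon Z\setminus D_Z \to X(w)^\circ$ is an isomorphism; and (iii) $\theta^{-1}(X(w)\setminus X(w)^\circ)=D_Z$ as sets. The base case $k=0$ is immediate: $Z=\mathrm{pt}$, $D_Z=\emptyset$, and $\theta$ sends the point to $eB=X(\id)^\circ$.

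For the inductive step, the key geometric input comes from the hypothesis that the word is reduced, giving $\ell(w's_{i_k})>\ell(w')$ and thus that $w'$ is minimal in its coset modulo $W_{P_{i_k}}$. This yields two facts I will use repeatedly: (a) $X(w)=p_{i_k}^{-1}(p_{i_k}(X(w')))$, and (b) $p_{i_k}|_{X(w')^\circ}$ is an isomorphism onto its image, so that $p_{i_k}^{-1}(p_{i_k}(X(w')^\circ))=X(w')^\circ\sqcup X(w)^\circ$ decomposes into two Schubert cells. Combining (b) with the inductive isomorphism $\theta'\colon Z'\setminus D_{Z'}\xrightarrow{\sim} X(w')^\circ$ and the fiber square \eqref{E:BSconst}, I obtain an isomorphism of schemes
\[
\pi^{-1}(Z'\setminus D_{Z'})\;\xrightarrow{\sim}\;X(w')^\circ\sqcup X(w)^\circ\;\subseteq\; G/B,
\]
under which $\theta$ corresponds to the inclusion into $G/B$.

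The conceptual crux of the inductive step is the identification of the section $\sigma\colon Z'\to Z$ determined by $\calO_{Z'}\hookrightarrow\calE'$ with the graph $z'\mapsto(z',\theta'(z'))$ inside $Z'\times_{G/P_{i_k}}G/B$. Unwinding the Euler sequence on $G/B=\Pbb(E)$, the subbundle $\calO\hookrightarrow\calE$ picks out the tautological line at each point, so the resulting section of $\Pbb(\calE)=G/B\times_{G/P_{i_k}}G/B\to G/B$ is the diagonal; pulling back by $\theta'$ produces the graph. Under the identification above this means $D_k\cap\pi^{-1}(Z'\setminus D_{Z'})$ corresponds precisely to $X(w')^\circ$, and therefore $\theta$ restricts to an isomorphism $Z\setminus D_Z=\pi^{-1}(Z'\setminus D_{Z'})\setminus D_k\xrightarrow{\sim}X(w)^\circ$. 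This proves (ii); properness of $\theta$ together with density of $X(w)^\circ$ in $X(w)$ then gives (i).

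For (iii), it suffices to show $\theta(D_Z)\subseteq X(w)\setminus X(w)^\circ$. On $D_k$ the image equals $\theta'(Z')=X(w')$, which lies in $X(w)\setminus X(w)^\circ$. On $\pi^{-1}(D_{Z'})$, the inductive hypothesis places $\theta(\pi^{-1}(D_{Z'}))$ inside $\bigcup_{u<w'}(X(u)^\circ\cup X(us_{i_k})^\circ)$; the Bruhat lifting property applied to $u\le w'=ws_{i_k}$ forces $us_{i_k}\le w$, with strict inequality when $u<w'$, so every such cell lies in $X(w)\setminus X(w)^\circ$. The main obstacle I expect is the bundle-theoretic identification of $\sigma$ with the graph of $\theta'$; once this is in hand, everything else reduces to standard facts about the $\Pbb^1$-fibration $G/B\to G/P_{i_k}$ and bookkeeping with the Bruhat order.
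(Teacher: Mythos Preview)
Your argument is correct. The paper does not actually supply a proof of this proposition: it simply records it as ``well known'' and points to \cite[\S 2.2]{brion.kumar:frobenius}. Your inductive proof is the standard one and is essentially what one finds in that reference. The key identification you single out---that the section $\sigma$ of $\pi$ induced by $\calO_{Z'}\hookrightarrow \calE'$ is the pullback of the diagonal section of $G/B\times_{G/P_{i_k}}G/B\to G/B$, hence the graph of $\theta'$---is exactly the point that needs care, and you handle it correctly. One small remark on the deduction of (i): ``properness plus density'' only gives $\theta(Z)\supseteq X(w)$; the reverse containment follows either from your later argument for (iii), or more directly by noting that $Z$ is irreducible of dimension $k=\ell(w)$, so its image is an irreducible closed subvariety of dimension at most $\ell(w)$ containing $X(w)$.
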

The proposition implies that the Bott-Samelson variety
$Z_{i_1,\dots,i_k}$ is a log-resolution of the Schubert variety
$X(w)$.

\section{Equivariant $\Kt$-theory of flag manifolds and Demazure-Lusztig operators} 
In this section we recall the definition and basic properties of
equivariant $\Kt$-theory of flag manifolds, and of certain
Demazure-Lusztig operators acting on equivariant $\Kt$-theory. This
setup is well-known from the theory of Hecke algebras, see
e.g.,~\cite{lusztig:eqK} and~\cite{ginzburg:methods}.

\subsection{Equivariant $\Kt$-theory} 
Let $X$ be a smooth, quasi-projective algebraic variety endowed with a
$T$-action. The (algebraic) equivariant $\Kt$-theory ring $K_T(X)$ is
the ring generated by symbols $[E]$, where $E \to X$ is an equivariant
vector bundle, modulo the relations $[E]=[E_1]+[E_2]$ for all short
exact sequences $0 \to E_1 \to E \to E_2 \to 0$ of equivariant vector
bundles. The ring addition is given by direct sums, and multiplication
by tensor products. Since $X$ is smooth, every (equivariant) coherent
sheaf has a finite resolution by (equivariant) vector bundles
\cite[Proposition~5.1.28]{chriss2009representation}, and $K_T(X)$
coincides with the Grothendieck group of (equivariant) coherent
sheaves on $X$. The ring $K_T(X)$ is an algebra over the Laurent
polynomial ring $K_T(\pt) =\bbZ[e^{\pm t_1}, \ldots , e^{\pm t_r}]$ where
$e^{t_i}$ are characters corresponding to a basis of the Lie algebra
of $T$; alternatively $K_T(\pt)$ may be viewed as the representation
ring $R(T)$ of $T$.

In our situation $X=G/B$ and $T$ acts on $X$ by left
multiplication. Since $X$ is smooth, the ring $K_T(X)$ coincides with
the Grothendieck group of $T$-linearized coherent sheaves on~$X$.
There is a pairing, called the $\Kt$-theoretic intersection pairing,
\[ 
\langle - , - \rangle :K_T(X) \otimes
K_T(X) \to K_T(\pt) = R(T)
\] 
defined on classes $[E]$, $[F]$ of vector bundles by
\[ 
\langle [E], [F] \rangle := \int_X E \otimes F = \chi(X; E \otimes F)
\/.
\] 
Here $\chi(X; -)$ is the (equivariant) Euler characteristic, i.e.,
the virtual character
\[
\chi(X;-) = \sum (-1)^i \: \ch_T (H^i(X; -)) \/. 
\]

Let $\calO_w:=[\calO_{X(w)}]$ be the Grothendieck class determined by
the structure sheaf of $X(w)$ (a coherent sheaf), and similarly
$\calO^w:= [\calO_{Y(w)}]$. The equivariant $\Kt$-theory ring has
$K_T(\pt)$-bases $\{\calO_w \}_{w \in W}$ and $\{ \calO^w \}_{w \in
  W}$. Let $\partial X(w) :=X(w) \smallsetminus X(w)^\circ$ be the boundary
of the Schubert variety $X(w)$, and similarly $\partial Y(w)$ the
boundary of $Y(w)$.~It is known that the dual bases of $\{ \calO_w \}
$ and $\{\calO^w \}$ are given by the classes of the ideal sheaves
$\calI^w:= [\calO_{Y(w)}(-\partial Y(w))]$, respectively $ \calI_w:=
[\calO_{X(w)}(-\partial X(w))]$, i.e.,
\[ 
\langle \calO_u , \calI^v \rangle =
\langle \calO^u, \calI_v \rangle = \delta_{u,v} \/. 
\] 
See e.g.,~\cite[Proposition~3.4.1]{brion:flagv} for the
non-equivariant case; the same proof works equivariantly. See
also~\cite[\S2]{graham.kumar:positivity}.  In fact,
\begin{equation}\label{equ:idealstru}
 \calO_w= \sum_{v \le w} \calI_v 
\quad\textrm{ and }  \quad \calI_w
 = \sum_{v \le w} (-1)^{\ell(w) - \ell(v)} \calO_v
\end{equation}
(\cite[Proposition~4.3.2]{brion:flagv}).
We will also need that
\begin{equation}\label{eq:Opair}
\langle \calO_u, \calO^v \rangle =
\begin{cases}
0 & \text{if } u< v \\
1 & \text{if } u\ge v 
\end{cases} \/;
\end{equation}
this is proved in e.g.,~\cite[Theorem~4.2.1]{brion:flagv}.

\subsection{Demazure-Lusztig (DL) operators}\label{ss:DL} 
Fix a simple root $\alpha_i \in \Delta$ and the corresponding minimal
parabolic subgroup $P_i \subseteq G$. 
Consider the diagram
\[
\xymatrix@C=50pt{
G/B \times_{G/P_{i}} G/B
\ar[r]^-{\pr_1}\ar[d]^{\pr_2} & G/B \ar[d]^{p_{i}} \\ 
G/B \ar[r]^{p_{i}} & G/P_{i}
} 
\]
The Demazure operator $\partial_i: K_T(X) \to K_T(X)$
\cite{demazure:desingularisations} is defined by
\[
\partial_i:= (p_i)^* (p_i)_* = (\pr_1)_* \pr_2^*\:.
\]
Since $G/B\to G/P_i$ is a projective bundle, $(p_i)_*(p_i)^*$ is the identity,
and it follows that $\partial_i^2 = \partial_i$.
The operator $\partial_i$ satisfies (e.g., from \cite[Lemma 4.12]{kostant.kumar:KT})
\begin{equation}\label{equ:BGGonstru}
 \partial_i(\calO_w) = 
\begin{cases} 
\calO_{ws_i} & \textrm{ if } ws_i>w \/; \\ 
\calO_w & \textrm{ otherwise } \/. 
\end{cases} 
\end{equation}
One can verify that for $v\in W$, represented by a reduced word $s_{i_1}\cdots
s_{i_k}$, the operator
\[
\partial_v:=\partial_{i_1}\circ\cdots\circ \partial_{i_k}
\]
is independent of the chosen reduced word;~cf.~\cite[\S3]
{kostant.kumar:KT}.  With this definition we have that if
$\ell(uv^{-1})=\ell(u)+\ell(v^{-1})$, then
$\partial_v(\calO_u)=\calO_{uv^{-1}}$.  Since $p_i$ is $G$-equivariant
and $Y(w)=w_0 X(w_0w)$, it follows easily that
$\partial_i(\calO^w)=\calO^{ws_i}$ if $ws_i<w$ and
$\partial_i(\calO^w) = \calO^w$ otherwise.

Fix an indeterminate $y$. The $\lambda_y$-class of a vector bundle $E$
is the class
\[ 
\lambda_y(E):= \sum_k [\wedge^k E] y^k \in K_T(X)[y] \/.
\] 
The $\lambda_y$-class is multiplicative, i.e., if $0 \to E_1 \to E \to
E_2 \to 0$ is a short exact sequence of equivariant vector bundles,
then $\lambda_y(E) = \lambda_y(E_1) \lambda_y(E_2)$ as elements in
$K_T(X)[y]$. We refer to the
books~\cite{fulton.lang:riemann-roch,hirzebruch:topological} for
details in the non-equivariant case. The equivariant case involves no
additional subtleties.

We define next the main operators used in this paper. 
\begin{defin}\label{def:hecke} 
Let $\alpha_i \in \Delta$ be a simple root. Define the operators 
\[
\opT_i: = \lambda_y(T^*_{p_i})\, \partial_i - \id; \quad
\opT^\vee_i: = \partial_i\, \lambda_y(T^*_{p_i}) - \id \/. 
\] 
\end{defin} 
The operators $\opT_i$ and $\opT^\vee_i$ are $K_T(\pt)[y]$-module
endomorphisms of $K_T(X)[y]$. We will occasionally work in
$K_T(X)[y^{\pm 1}]$ and regard these as $K_T(\pt)[y^{\pm 1}]$-module
endomorphisms.

\begin{remark}\label{rmk:convos} 
The operator $\opT^\vee_i$ was defined by Lusztig
\cite[(4.2)]{lusztig:eqK} in relation to affine Hecke algebras and
equivariant $\Kt$-theory of flag varieties. (Lusztig worked in
topological equivariant $\Kt$-theory, but since $X=G/B$ has a
$T$-invariant algebraic cell-decomposition by Schubert cells, the
algebraic and topological equivariant $\Kt$-theories of $X$ are
naturally
isomorphic~\cite{chriss2009representation}[Proposition~5.5.6,
  p.~272].)~As we shall see below, the `dual' operators~$\opT_i$
arises naturally in the study of motivic Chern classes of Schubert
cells. In an algebraic form, the operators $\opT_i$ appeared recently in
\cite{brubaker.bump.licata,lee.lenart.liu:whittaker} and 
 \cite{MSA:whittaker}, in relation to Whittaker functions.
\qede\end{remark}

\begin{lemma}\label{lemma:adjoint} 
The operators $\opT_i$ and $\opT^\vee_i$ are 
adjoint to each other. That is, for every $a, b \in K_T(X)$, 
\[ 
\langle \opT_i(a), b \rangle = \langle a , 
\opT^\vee_i(b) \rangle \/. 
\] 
The same equality holds for $a, b \in K_T(X)[y^{\pm 1}]$, if one
extends the pairing bilinearly in $y$.
\end{lemma}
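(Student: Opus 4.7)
The plan is to reduce the adjointness of $\calT_i$ and $\calT_i^\vee$ to the self-adjointness of the Demazure operator $\partial_i$ with respect to the Poincar\'e pairing, since multiplication by any class (in particular by $\lambda_y(T^*_{p_i}) \in K_T(X)[y]$) and the identity are manifestly self-adjoint.

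First I would establish the lemma that $\partial_i$ is self-adjoint, i.e., $\langle \partial_i a, b\rangle = \langle a, \partial_i b\rangle$ for all $a,b \in K_T(X)$. Using the factorization $\partial_i = p_i^* (p_i)_*$ from the Cartesian square in the definition of $\partial_i$, the projection formula gives
\[
\langle \partial_i a, b\rangle = \chi(X; p_i^*(p_i)_*(a) \cdot b) = \chi(G/P_i; (p_i)_*(a) \cdot (p_i)_*(b)),
\]
where the second equality uses $(p_i)_*(p_i^* c \cdot b) = c \cdot (p_i)_* b$ and the fact that the Euler characteristic is preserved by proper pushforward. The right-hand side is symmetric in $a$ and $b$, which yields the claim.

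Next, since multiplication by the element $\lambda_y(T^*_{p_i}) \in K_T(X)[y]$ is self-adjoint for the pairing (the pairing is defined by multiplication followed by $\chi$), I compute directly:
\[
\langle \calT_i(a), b\rangle = \langle \lambda_y(T^*_{p_i})\,\partial_i(a), b\rangle - \langle a, b\rangle = \langle \partial_i(a), \lambda_y(T^*_{p_i})\,b\rangle - \langle a, b\rangle.
\]
Applying self-adjointness of $\partial_i$ to the first term converts this into
\[
\langle a, \partial_i(\lambda_y(T^*_{p_i})\,b)\rangle - \langle a, b\rangle = \langle a, \calT_i^\vee(b)\rangle,
\]
which is the desired identity. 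The extension to $K_T(X)[y^{\pm 1}]$ is immediate by $K_T(pt)[y^{\pm 1}]$-bilinearity of the pairing, since the identity is polynomial in $y$ on each side.

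The only mildly delicate step is the self-adjointness of $\partial_i$; everything else is a direct manipulation. Since $p_i\colon G/B \to G/P_i$ is a smooth proper $\mathbb{P}^1$-bundle, the projection formula and the invariance of $\chi$ under proper pushforward are standard, so this step presents no real obstacle. I do not anticipate a technical hurdle beyond carefully applying the projection formula to the decomposition $\partial_i = p_i^*(p_i)_*$.
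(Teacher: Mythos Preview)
Your proof is correct and follows essentially the same approach as the paper: both establish self-adjointness of $\partial_i$ via the projection formula applied to $\partial_i = p_i^*(p_i)_*$, then use that multiplication by $\lambda_y(T^*_{p_i})$ moves freely across the pairing. The only cosmetic difference is that the paper splits $\lambda_y(T^*_{p_i}) = 1 + y\,[T^*_{p_i}]$ and checks the constant and $y$-coefficients separately, whereas you treat the whole $\lambda_y$-class at once; your version is slightly more streamlined but the underlying argument is identical.
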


\begin{proof}  
The identity is self adjoint and $\partial_i$ is also self adjoint.
Indeed, by the projection formula
\[ 
\langle \partial_i (a), b \rangle = \int_{G/B} p_i^*(p_i)_*(a) \cdot b
= \int_{G/P_i} (p_i)_*(a)\cdot (p_i)_*(b) \/,
\] 
and the last expression is symmetric in $a,b$. It remains to show that
coefficient of $y$ in both sides is the same, i.e., $\langle T^*_{p_i}
\partial_i (a), b \rangle = \langle a, \partial_i T^*_{p_i} (b)
\rangle$.  We calculate
\[ 
\begin{split} 
\langle T^*_{p_i} \partial_i (a) , b \rangle &= 
\int_{G/B} T^*_{p_i}  p_i^*((p_i)_* a) \cdot b = 
\int_{G/P_i} (p_i)_*(a) \cdot (p_i)_*( T^*_{p_i} \cdot b)  \\ 
&= \int_{G/B} a \cdot p_i^*(p_i)_*( T^*_{p_i} \cdot b) = 
\langle a, \partial_i(T^*_{p_i} \cdot b) \rangle \/.\qedhere
\end{split}
\] 
\end{proof}

According to Lusztig's result (\cite[Theorem in~\S5]{lusztig:eqK}),
the operators $\opT^\vee_i$ satisfy the braid relations and the
quadratic relations defining the Hecke algebra $H_W(-y)$ of the Weyl
group~$W$ with parameter $-y$. (In the language of this paper, the
variable $q$ from~\cite{lusztig:eqK} satisfies $q=-y$.)  Since the
 $\Kt$-theoretic pairing is non-degenerate,
Lemma~\ref{lemma:adjoint} implies that both sets of operators
$\opT_i$ and $\opT^\vee_i$ satisfy the same identities. We
record this next.

\begin{prop}[Lusztig]\label{prop:hecke-relations} 
The operators $\opT_i$ and $\opT^\vee_i$ satisfy the braid relations
for the Weyl group $W$. For each simple root $\alpha_i \in \Delta$ the
following quadratic relations hold:
\begin{equation}\label{eq:quad}
(\opT_i + \id) (\opT_i + y) = (\opT^\vee_i + \id) 
(\opT^\vee_i + y) = 0 \/. 
\end{equation}
\end{prop}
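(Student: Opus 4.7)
The plan is to leverage Lusztig's classical result~\cite[\S 5]{lusztig:eqK} (with the substitution $q = -y$), which establishes the braid and quadratic relations for $\calT_i^\vee$, and then transfer these identities to $\calT_i$ via the adjointness of Lemma~\ref{lemma:adjoint}. The key ingredient in this transfer is that the equivariant $\Kt$-theoretic Poincar\'e pairing on $K_T(G/B)$ is non-degenerate---this follows from the existence of the dual Schubert bases $\{\calO_w\}$ and $\{\calI^w\}$ recalled earlier---so an endomorphism $P$ of $K_T(G/B)[y^{\pm 1}]$ vanishes identically if and only if $\langle a, P(b)\rangle = 0$ for all $a,b$.

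For the quadratic relation, two applications of Lemma~\ref{lemma:adjoint} give
\[
\langle a, (\calT_i+id)(\calT_i+y)(b)\rangle
= \langle (\calT_i^\vee+y)(\calT_i^\vee+id)(a), b\rangle.
\]
The two factors on the right are polynomials in $\calT_i^\vee$ and hence commute, so the right side equals $\langle (\calT_i^\vee+id)(\calT_i^\vee+y)(a), b\rangle$, which vanishes by Lusztig's quadratic relation. Non-degeneracy yields $(\calT_i+id)(\calT_i+y) = 0$. For the braid relation of length $m = m_{ij}$ (the order of $s_is_j$), the identity to prove is that the two alternating words of length $m$ in $\{\calT_i, \calT_j\}$ coincide. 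Iterating Lemma~\ref{lemma:adjoint} $m$ times converts this into a pairing identity whose right-hand side involves the \emph{reverse} alternating words in $\{\calT_i^\vee, \calT_j^\vee\}$, still of length $m$. Since the reverse of an alternating word of length $m$ in $\{i,j\}$ is again an alternating word of length $m$ in $\{i,j\}$, and since there are exactly two such words (starting with $i$ or with $j$), Lusztig's braid relation for $\calT_i^\vee$ asserts precisely that these two products agree as endomorphisms. Thus both sides of the target identity for $\calT_i$ pair to the same value against arbitrary $a, b$, and non-degeneracy finishes the argument.

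The substantive content is entirely supplied by the quotation of Lusztig's theorem; the rest is linear algebra with one careful order-reversal under adjoints. The main (minor) obstacle is therefore purely bookkeeping: checking that the adjoint calculation interchanges reversed alternating words in a way that is absorbed by Lusztig's identity, a verification that is uniform in the parity of $m_{ij}$ (when $m$ is odd the two alternating words are palindromes of each other, when $m$ is even the reverse of each is the other).
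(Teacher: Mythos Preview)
Your proposal is correct and follows essentially the same approach as the paper: quote Lusztig's result for the $\calT_i^\vee$ operators, then use the adjointness from Lemma~\ref{lemma:adjoint} together with non-degeneracy of the Poincar\'e pairing to transfer the braid and quadratic relations to the $\calT_i$. The paper states this transfer in one sentence without spelling out the order-reversal bookkeeping that you have (correctly) supplied.
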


{}From these relations it follows that the operators $\opT_i$ and
$\opT^\vee_i$ are invertible in~$K_T(X)[y^{\pm 1}]$:
\begin{equation}\label{E:invTi} 
\opT_i^{-1} = - \frac{1}{y} \opT_i - \frac{1+y}{y} \id \quad \/; \quad (\opT^\vee_i)^{-1} 
= -\frac{1}{y} \opT^\vee_i - \frac{1+y}{y} \id \/.
\end{equation}
Since the operators $\opT_i$, $\opT^\vee_i$ satisfy the braid
relations, we may define
\begin{equation}\label{eq:Tv}
\opT_v := \opT_{i_1}\cdots \opT_{i_k}\quad,\quad
\opT^\vee_v := \opT^\vee_{i_1}\cdots\opT^\vee_{i_k}
\end{equation}
for $v\in W$ represented by a reduced word $s_{i_1}\cdots s_{i_k}$.

The cohomological versions of the Demazure-Lusztig operators, which
appear in the study of degenerate Hecke algebras
\cite{ginzburg:methods,aluffi.mihalcea:eqcsm} are self inverse (i.e.,
$(\opT_i^{coh})^2 = \id$) and therefore satisfy the relations of the
group algebra $\bbZ[W]$ of the Weyl group. Obviously, this is not true
in $\Kt$-theory, due to~\eqref{E:invTi}.  However, the multiplication
of Demazure-Lusztig operators behaves rather nicely, as shown in the
following proposition.

\begin{prop}\label{prop:Heckemult} 
Let $u, v \in W$ be two Weyl group elements. Then 
\begin{equation}\label{eq:TuTvm1}
\opT_u \cdot \opT_v^{-1} = c_{uv^{-1}}(y)
\opT_{uv^{-1}} + \sum_{w < uv^{-1}} c_w(y) \opT_w \/, 
\end{equation}
where $c_{uv^{-1}}$ and $c_w(y)$ are rational functions in $y$
determined by $u,v$. Further, if $\ell(uv^{-1}) = \ell(u)
+\ell(v^{-1})$, then $c_{uv^{-1}}(y) = (-y)^{-\ell(v)}$.  The same
statements hold for the multiplication $\opT^\vee_u \cdot
(\opT^\vee_v)^{-1}$ of the dual operators.
\end{prop}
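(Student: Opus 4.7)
The plan is to work entirely in the abstract Iwahori--Hecke algebra $H_W(-y)$: by Proposition~\ref{prop:hecke-relations}, the operators $\calT_i$ satisfy the braid relations and the quadratic relation $(\calT_i+\id)(\calT_i+y)=0$, so any product and inverse can be computed from these relations alone and then transported to the operator algebra. The dual statement for $\calT_u^\vee(\calT_v^\vee)^{-1}$ is handled identically, since the $\calT_i^\vee$ satisfy the same relations. From the quadratic relation I will repeatedly use $\calT_i^{-1} = -y^{-1}\calT_i - y^{-1}(1+y)\id$ and $\calT_i^2 = -(1+y)\calT_i - y$.

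The heart of the argument is a single-step formula for $\calT_w\cdot\calT_i^{-1}$, split by the length of $ws_i$. If $\ell(ws_i)>\ell(w)$, then $\calT_w\calT_i = \calT_{ws_i}$ and substituting the formula for $\calT_i^{-1}$ gives
\[
\calT_w\calT_i^{-1} = -y^{-1}\calT_{ws_i} - y^{-1}(1+y)\calT_w,
\]
while if $\ell(ws_i)<\ell(w)$, then writing $w = w's_i$ reduced and simplifying via $\calT_i^2 = -(1+y)\calT_i - y$ collapses this to $\calT_w\calT_i^{-1} = \calT_{w'} = \calT_{ws_i}$. The full statement is then proved by induction on $\ell(v)$: the base $v=e$ is trivial; for the step, choose a reduced factorization $v = s_iv'$, note via \eqref{eq:Tv} that $\calT_v^{-1} = \calT_{v'}^{-1}\calT_i^{-1}$, apply the induction hypothesis to $\calT_u\calT_{v'}^{-1}$, and then apply the two-case formula termwise.

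For the leading-coefficient claim, I assume $\ell(uv^{-1}) = \ell(u) + \ell(v^{-1})$. Concatenating reduced expressions for $u$ and for $v^{-1} = (v')^{-1}s_i$ yields a reduced expression for $uv^{-1}$; inspecting it gives both $\ell(u(v')^{-1}) = \ell(u)+\ell(v')$ and $u(v')^{-1}s_i > u(v')^{-1}$ in Bruhat order. By induction the leading term of $\calT_u\calT_{v'}^{-1}$ is $(-y)^{-\ell(v')}\calT_{u(v')^{-1}}$, and multiplying by $\calT_i^{-1}$ falls into the first case above with $w = u(v')^{-1}$, contributing $-y^{-1}\calT_{uv^{-1}}$ at the top. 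The leading coefficient of $\calT_u\calT_v^{-1}$ is therefore $(-y)^{-\ell(v')}\cdot(-y^{-1}) = (-y)^{-\ell(v)}$, as asserted.

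The hard part, and the main bookkeeping obstacle I foresee, is confirming that every remaining term in the expansion is indexed by some Weyl element strictly below $uv^{-1}$. The error terms from $\calT_u\calT_{v'}^{-1}$ are supported on $\{w : w < u(v')^{-1}\}$; after multiplying by $\calT_i^{-1}$, each such $\calT_w$ produces contributions at $\calT_w$ and possibly $\calT_{ws_i}$. Since $u(v')^{-1}s_i = uv^{-1} > u(v')^{-1}$, the Bruhat lifting property yields $w,\, ws_i \leq uv^{-1}$; strictness holds because $w < u(v')^{-1}$ precludes $w = u(v')^{-1} = uv^{-1}s_i$ and hence $ws_i = uv^{-1}$. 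The subleading $-y^{-1}(1+y)\calT_{u(v')^{-1}}$ from the leading term is indexed by $u(v')^{-1} < uv^{-1}$ already by length, and the rational functions $c_w(y)$ emerge naturally from the iterated substitutions. This closes the induction and, applied verbatim to $\calT_i^\vee$, gives the dual statement.
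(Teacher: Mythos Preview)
Your argument has a genuine gap. In the last paragraph you control the error terms by asserting that $u(v')^{-1}s_i = uv^{-1} > u(v')^{-1}$, but this inequality was only established in the previous paragraph \emph{under the extra hypothesis} $\ell(uv^{-1}) = \ell(u) + \ell(v^{-1})$. For general $u,v$ it can fail: e.g.\ in $S_3$ with $u = s_1s_2$, $v' = s_1$, $s_i = s_2$ (so $v = s_2s_1$) one has $u(v')^{-1} = s_1s_2s_1$ while $uv^{-1} = s_2s_1 < s_1s_2s_1$. In this ``Case B'' your Bruhat-lifting argument no longer applies: right-multiplying the inductive expansion by $\calT_i^{-1}$ can, a priori, produce terms $\calT_{ws_i}$ or $\calT_w$ with index not bounded by $uv^{-1}$. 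Those contributions must eventually cancel (since the proposition is true), but your proof does not show this, so the induction for the general expansion~\eqref{eq:TuTvm1} is not closed.

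The remedy is exactly the organizational choice the paper makes: factor from the \emph{right}, $v = v's_k$, so that $\calT_u\calT_v^{-1} = (\calT_u\calT_k^{-1})\,\calT_{v'}^{-1}$, apply your single-step formula to $\calT_u\calT_k^{-1}$ first (two cases on $us_k$), and only then invoke the induction hypothesis on each resulting product $\calT_x\calT_{v'}^{-1}$. Now every term is of the shape to which induction applies directly, with leading index $x(v')^{-1}$; the only remaining comparison is between $us_k(v')^{-1} = uv^{-1}$ and $u(v')^{-1}$, which is handled by the elementary Bruhat lemma (the paper's Lemma~\ref{lemma:bruhat}). In short: do the case split before the induction, not after.
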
 
To prove Proposition~\ref{prop:Heckemult} we need the following lemma.

\begin{lemma}\label{lemma:bruhat} 
Let $u,v \in W$ be two Weyl group elements, and let $s$ be a root
reflection such that $us>u$ and $vs < v$. Then $usv^{-1} < uv^{-1}$. 
\end{lemma}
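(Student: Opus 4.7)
The plan is to reduce this to a computation with the standard length/Bruhat-order criterion for root reflections: for any positive root $\beta$ and any $w \in W$, one has $wt_\beta < w$ in Bruhat order if and only if $w(\beta) < 0$ (equivalently $\ell(wt_\beta) < \ell(w)$). Using this, inequalities in the Bruhat order become sign conditions on the action of Weyl group elements on roots, which are then purely combinatorial to check.

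First I would write the given root reflection as $s = t_\alpha$ for a unique positive root $\alpha$. The two hypotheses $us > u$ and $vs < v$ translate into $u(\alpha) > 0$ and $v(\alpha) < 0$, respectively. Next, the key algebraic observation is that one can shift $s$ from the middle to the right by conjugating with $v$:
\[
us v^{-1} = (uv^{-1})\,(v s v^{-1}) = (uv^{-1})\, t_{v(\alpha)} \/.
\]
Since $v(\alpha) < 0$, the positive root corresponding to this reflection is $\gamma := -v(\alpha)$, and $t_{v(\alpha)} = t_\gamma$.

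It then remains to apply the criterion to $w := uv^{-1}$ and $\gamma$: we need $w(\gamma) < 0$. But
\[
w(\gamma) = uv^{-1}(-v(\alpha)) = -u(\alpha) \/,
\]
and $u(\alpha) > 0$ by hypothesis, so $w(\gamma) < 0$. Therefore $wt_\gamma < w$, i.e., $usv^{-1} < uv^{-1}$, as claimed.

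There is no real obstacle here; the only place where one must be careful is the bookkeeping of signs when converting the right-multiplication by $s$ on $v$ into a conjugation, and in quoting the precise form of the Bruhat-order criterion for reflections (which is standard and can be found, e.g., in Humphreys' Coxeter groups book). The entire argument is a one-line identity $usv^{-1} = uv^{-1} t_{v(\alpha)}$ followed by checking the sign of $uv^{-1}$ applied to the appropriate positive root; no induction on length or subword argument is required.
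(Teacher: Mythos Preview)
Your proof is correct and is essentially identical to the paper's own argument: both translate the hypotheses into the sign conditions $u(\alpha)>0$ and $v(\alpha)<0$, use the conjugation identity $usv^{-1}=(uv^{-1})s_{v(\alpha)}$, and then apply the criterion $w s_\gamma < w \iff w(\gamma)<0$ to the positive root $\gamma=-v(\alpha)$.
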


\begin{proof} 
(Cf.~\cite[\S5.7]{humphreys:reflection}.)
Let $s = s_\alpha$ for some positive root $\alpha$.  By hypothesis
$us_\alpha>u$ and $vs_\alpha<v$, hence $u(\alpha)>0$ and
$v(\alpha)<0$. Since $uv^{-1}(-v(\alpha))<0$, it follows that
$uv^{-1}s_{v(\alpha)}<uv^{-1}$.  On the other hand we have
$s_{v(\alpha)} = v s_\alpha v^{-1}$; therefore
$uv^{-1}s_{v(\alpha)}=us_\alpha v^{-1}$, and we are done.
\end{proof}

\begin{proof}[Proof of Proposition~\ref{prop:Heckemult}] 
We use ascending induction on $\ell(v) \ge 0$. The statement is clear
if $\ell(v) =0$. For $\ell(v) >0 $ write $v = v' s_k$ where $\ell(v')
< \ell(v)$. By definition, $\opT_u \cdot \opT_v^{-1} = \opT_u \cdot
\opT_{k}^{-1} \cdot \opT_{v'}^{-1}$. We have two cases: $u s_k < u$
and $us_k>u$. Consider first the situation $us_k <u$. Then $u$ has a
reduced decomposition ending in $s_k$, i.e., $u = u' s_k$ and
$\ell(u') < \ell(u)$. Then
\[ 
\opT_u \cdot \opT_{k}^{-1} \cdot \opT_{v'}^{-1} =
\opT_{u'} \cdot \opT_k \cdot \opT_{k}^{-1} \cdot
\opT_{v'}^{-1} = \opT_{u'} \cdot \opT_{v'}^{-1}
\/, 
\] 
and since $v' < v$ the result is known by induction. Assume next that
$us_k >u$. Using equation~\eqref{E:invTi} we obtain
\[ 
\opT_u \cdot \opT_v^{-1} = \opT_u \cdot
\opT_k^{-1} \cdot \opT_{v'}^{-1} = - \frac{1}{y}
\opT_u \cdot (\opT_k + y+1) \cdot \opT_{v'}^{-1}
= - \frac{1}{y} \opT_{us_k} \cdot \opT_{v'}^{-1} -
\frac{1+y}{y} \opT_u \cdot \opT_{v'}^{-1} \/. 
\] 
By induction, the leading term of $\opT_{us_k} \cdot \opT_{v'}^{-1}$
is $\opT_{us_k v'^{-1}}{=\opT_{uv^{-1}}}$ and the leading term of $
\opT_u \cdot \opT_{v'}^{-1} $ is $\opT_{u v'^{-1}}$. Observe that
$vs_k = v'<v$.  From Lemma~\ref{lemma:bruhat} we obtain that
$uv'^{-1}=us_kv^{-1}<uv^{-1}$, and this concludes the proof of the
existence of an expression~\eqref{eq:TuTvm1} as stated. If
$\ell(u)+\ell(v^{-1})=\ell(uv^{-1})$, then arguing again by ascending
induction on $\ell(v)$ gives, with notation as above,
\[
\opT_u \cdot \opT_v^{-1} 
= - \frac{1}{y} \opT_{us_k} \cdot \opT_{v'}^{-1} + \cdots
= - \frac 1y \frac 1{(-y)^{\ell(v')}} \opT_{uv^{-1}} + \cdots\,,
\]
proving that $c(uv^{-1})=(-y)^{-\ell(v)}$ as claimed.

The statements for the dual operators are proved in the same way.
\end{proof}

\subsection{Actions on Schubert and fixed point bases} 
We will need several formulas concerning the action of the
Demazure-Lusztig operators on Schubert classes and on the classes
determined by the torus fixed points. For instance, the definition and
a standard localization argument
(cf.~Lemma~\ref{lem:actiononfixedpoint}(a) below) imply that, for
every $w \in W$,
\[
\opT_i( \calO_w) = 
\begin{cases}
(1+y e^{-w\alpha_i}) \calO_{ws_i} + l.o.t. & \textrm{ if } ws_i>w;\\ 
y e^{w\alpha_i} \calO_{w} + l.o.t.& \textrm{ if } ws_i<w\/. 
\end{cases}
\]
where l.o.t.~(lower order terms) stands for a sum of terms $P(y, e^t)
\calO_u$ with $u<ws_i$ on the first branch and $u <w$ on the second
branch. A similar formula holds for the dual operator:
\begin{equation}\label{E:Tiveeact} 
\opT^\vee_i (\calO^w) = (1+ y e^{w_0w(\alpha_i)}) \calO^{ws_i} +
l.o.t.\quad \textrm{ if } ws_i < w \/,
\end{equation} 
where now the l.o.t.~consist of multiples of $\calO^v$ for $v > ws_i$.
Consider next the localized equivariant $\Kt$-theory ring
\[
K_T(G/B) \hookrightarrow\: K_T(G/B)_{\loc} 
:= K_T(G/B) \otimes_{K_T(\pt)} \Frac( K_T(\pt)) \:,
\]
where $\Frac$ denotes the fraction field. The Weyl group elements
$w\in W$ are in bijection with the torus fixed points $e_w:=wB \in
G/B$. Let $\iota_w:=[\calO_{e_w}] \in K_T(G/B)_{\loc}$ be the class of
the structure sheaf of $e_w$. By the localization theorem, the classes
$\iota_w$ form a basis for the localized equivariant $\Kt$-theory
ring; we call this the {\em fixed point basis}. For a weight $\lambda$
consider the $G$-equivariant line bundle $\calL_\lambda := G \times^B
\bbC_\lambda$ with character $\lambda$ in the fiber over $1.B$.\footnote{
Observe that our notation is opposite to the one 
in~\cite[p.~63, formula~(7)]{brion.kumar:frobenius}.}
For example, the relative cotangent bundle $T^*_{p_i}$ for the projection
$p_i$ is isomorphic to $\calL_{\alpha_i}$.  We need the following
lemma.

\begin{lemma}\label{lem:actiononfixedpoint} 
The following formulas hold in $K_T(G/B)_{\loc}[y^{\pm 1}]$:

(a) For every weight $\lambda$, $\calL_\lambda \cdot \iota_w =
e^{w \lambda} \iota_w$;

(b) For every simple root $\alpha_i$, 
\[ 
\partial_i (\iota_w) =\frac{1}{1-e^{w\alpha_i}}\iota_{w}
+\frac{1}{1-e^{-w\alpha_i}}\iota_{ws_{i}} \/; 
\]

(c) The action of the operator $\opT_i$ on the fixed point basis is
given by the following formula
\[
\opT_i(\iota_{w})=-\frac{1+y}{1-e^{-w\alpha_i}}\iota_{w}
+\frac{1+ye^{-w\alpha_i}}{1-e^{-w\alpha_i}}\iota_{ws_{i}}.
\]

(d) The action of the adjoint operator $\opT^\vee_i$ is given by

\[
\opT^\vee_i(\iota_{w})=-\frac{1+y}{1-e^{-w\alpha_i}}\iota_{w}
+\frac{1+ye^{w\alpha_i}}{1-e^{-w\alpha_i}}\iota_{ws_{i}} \/.
\]

(e) The action of the inverse operator $(\opT^\vee_i)^{-1}$is given
by
\[
(\opT^\vee_i)^{-1}(\iota_{w})=- \frac{1+y^{-1}}{1- e^{w\alpha_i}}
\iota_{w}-\frac{y^{-1}+e^{w\alpha_i}}{1-e^{-w\alpha_i}}\iota_{ws_{i}}\/.
\]
\end{lemma}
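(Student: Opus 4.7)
The plan is to handle the five parts in order: (a) is immediate from the construction of $\calL_\lambda$, (b) is the crux and follows from equivariant $\Kt$-theoretic localization on the $\Pbb^1$-fiber of $p_i$, and then (c)--(e) are routine algebraic manipulations assembling (a) and (b) with the definitions of $\calT_i$, $\calT_i^\vee$, and the inversion formula~\eqref{E:invTi}. For (a): since the fiber of $\calL_\lambda = G \times^B \C_\lambda$ over $1.B$ carries the $T$-weight $\lambda$, its fiber over $e_w = w.B$ carries the weight $w\lambda$; as $\iota_w = [\cO_{e_w}]$ is a skyscraper, tensoring with $\calL_\lambda$ acts by the scalar $e^{w\lambda}$.

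For (b), the main step, recall $\partial_i = p_i^*(p_i)_*$, where $p_i:G/B \to G/P_i$ is a $\Pbb^1$-bundle. The pushforward of the structure sheaf of a closed point equals the structure sheaf of its image, so $(p_i)_*(\iota_w) = \iota_{p_i(e_w)}$, and therefore $\partial_i(\iota_w) = [\cO_{F_w}]$ where $F_w = p_i^{-1}(p_i(e_w)) \cong \Pbb^1$ contains exactly the two $T$-fixed points $e_w$ and $e_{ws_i}$. Using the identification $T^*_{p_i} \cong \calL_{\alpha_i}$ and (a), the cotangent weight of $F_w$ at $e_w$ is $w\alpha_i$ and at $e_{ws_i}$ is $-w\alpha_i$. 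Localizing $[\cO_{F_w}]$ at the two fixed points and dividing by the $\lambda_{-1}$-class of the full cotangent space at each point (the common factors from the conormal cancel) produces the two coefficients $1/(1-e^{w\alpha_i})$ and $1/(1-e^{-w\alpha_i})$, giving the stated formula. This sign/convention check is the only place where care is needed; everything else is formal.

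For (c): from the definition $\calT_i(\iota_w) = \lambda_y(\calL_{\alpha_i})\partial_i(\iota_w) - \iota_w$, apply (a) to obtain $\lambda_y(\calL_{\alpha_i})\cdot \iota_w = (1+ye^{w\alpha_i})\iota_w$ and $\lambda_y(\calL_{\alpha_i})\cdot\iota_{ws_i} = (1+ye^{-w\alpha_i})\iota_{ws_i}$, substitute (b), and simplify the $\iota_w$-coefficient via $\tfrac{1+ye^{w\alpha_i}}{1-e^{w\alpha_i}}-1 = \tfrac{(1+y)e^{w\alpha_i}}{1-e^{w\alpha_i}} = -\tfrac{1+y}{1-e^{-w\alpha_i}}$. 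For (d): similarly, $\calT_i^\vee(\iota_w) = \partial_i(\lambda_y(\calL_{\alpha_i})\iota_w)-\iota_w = (1+ye^{w\alpha_i})\partial_i(\iota_w)-\iota_w$, using $K_T(pt)[y]$-linearity of $\partial_i$ to pull out the scalar produced by (a); then substitute (b) and simplify. For (e): apply the identity $(\calT_i^\vee)^{-1} = -\tfrac{1}{y}\calT_i^\vee - \tfrac{1+y}{y}\id$ from~\eqref{E:invTi} to the formula from (d), collect the coefficients of $\iota_w$ and $\iota_{ws_i}$, and simplify. The only real obstacle in the whole argument is getting the tangent-weight conventions right in (b); once that is fixed by the identification $T^*_{p_i}|_{e_w} = e^{w\alpha_i}$ from (a), the remaining parts are bookkeeping.
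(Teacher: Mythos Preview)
Your proof is correct and follows essentially the same approach as the paper: part~(a) via the weight of the fiber, part~(b) by identifying $\partial_i(\iota_w)$ with $[\cO_{p_i^{-1}(p_i(e_w))}]$ and localizing at the two fixed points of the $\Pbb^1$-fiber, and parts~(c)--(e) by direct substitution using the definitions and~\eqref{E:invTi}. The only cosmetic difference is that the paper restricts to the fiber and uses that $[\cO_{F_w}]$ is the identity in $K_T(F_w)$, whereas you localize in the ambient $G/B$ and cancel the conormal factors; these are equivalent computations.
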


\begin{proof} 
Part~(a) is a standard localization calculation, based on the fact
that the fiber of~$\mathcal{L}_\lambda$ over the fixed point $e_w$ is
a one-dimensional $T$-module of weight $w(\lambda)$.  For part~(b),
notice that $\partial_i (\iota_w) = [\calO_{p_i^{-1}(e_w)}]$, where
$p_i:G/B \to G/P_i$ is the projection, and (abusing notation) $e_w$
also denotes the corresponding fixed point in $G/P_i$. The fiber
$p_i^{-1}(e_w)\cong \bbP^1$ equals $w.[X(s_i)]$, the $w$-translate of
the Schubert curve.  It contains only two $T$-fixed points:
$e_w$~and~$e_{ws_i}$.  It follows that $[\calO_{p_i^{-1}(e_w)}] = a
\iota_w + b \iota_{ws_i}$ for suitable $a, b$.  By the projection
formula we can regard this as an expansion in the localized
equivariant $\Kt$-theory of the fiber itself. By the localization
theorem (see e.g.,~\cite{nielsen:diag}, and also \cite[\S5.10]{chriss2009representation}), 
\[ 
1 = a \cdot (\iota_w)|_{e_w} =a (1 - e^{w \alpha_i}) \/; \quad 
1 =b\cdot  (\iota_{ws_i})|_{e_{ws_i}} = b (1 - e^{- w \alpha_i}) \/. 
\]   
Then the statement in (b)
follows.  Part (c) follows from (a) and~(b), applied to
$\opT_i(\iota_{w})=(1+y\calL_{\alpha_i})\partial_i(\iota_w)-\iota_w$.
Formula (d) for the adjoint operator follows similarly.  Finally, part
(e) follows because $(\opT^\vee_i)^{-1}=-y^{-1}(\opT^\vee_i+y+1)$
(cf.~equation~\eqref{E:invTi}).
\end{proof}

We also record the action of several specializations of the
Demazure-Lusztig operators.

\begin{lemma}\label{lemma:yspec} 
(a) The specializations at $y=0$ satisfy
\[ 
(\opT_i)_{y=0} = (\opT^\vee_i)_{y=0} = \partial_i-\id \/.
\] 
Further, the following holds for every $w \in W$:
\[ 
(\partial_i - \id) (\calI_w )= 
\begin{cases} 
\calI_{ws_i} & \textrm{ if } ws_i > w \\ 
- \calI_{w} & \textrm{ if } ws_i < w  \/.
\end{cases} 
\]
(b) Let $w \in W$. Then the specializations at $y=-1$ satisfy
\[  
(\opT_i)_{y=-1} (\iota_w) = \iota_{ws_i} ; \quad 
(\opT^\vee_i)_{y=-1} (\iota_w)= \frac{1- e^{w \alpha_i}}
{1- e^{-w\alpha_i}} \iota_{ws_i} = -e^{w \alpha_i} \iota_{ws_i} \/. 
\]
In other words, this specialization is compatible with the right Weyl
group multiplication.
\end{lemma}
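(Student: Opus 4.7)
My plan is to treat the two parts separately. Part (a) follows from the definitions together with the self-adjointness of $\partial_i$, while part (b) is essentially a direct substitution into the formulas of Lemma~\ref{lem:actiononfixedpoint}.

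For part (a), I would first note that $T^*_{p_i} \cong \calL_{\alpha_i}$ is a line bundle, so $\lambda_y(T^*_{p_i}) = 1 + y[T^*_{p_i}]$. Substituting $y = 0$ kills the linear term, and both $\calT_i = \lambda_y(T^*_{p_i})\partial_i - \id$ and $\calT_i^\vee = \partial_i\lambda_y(T^*_{p_i}) - \id$ collapse to $\partial_i - \id$, giving the first equality. For the action on $\calI_w$, the plan is to expand $\partial_i(\calI_w) = \sum_v c_v\,\calI_v$ and extract the coefficients via the duality $\langle \calO^u, \calI_v\rangle = \delta_{u,v}$. Self-adjointness of $\partial_i$ (established inside the proof of Lemma~\ref{lemma:adjoint} via the projection formula) yields $c_u = \langle \partial_i(\calO^u), \calI_w\rangle$. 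Combining this with the already-recorded formula $\partial_i(\calO^u) = \calO^{us_i}$ when $us_i<u$ and $\calO^u$ otherwise, a short case analysis on the position of $s_i$ relative to $u$ and $w$ produces $\partial_i(\calI_w) = \calI_w + \calI_{ws_i}$ when $ws_i > w$ and $\partial_i(\calI_w) = 0$ when $ws_i < w$. Subtracting $\id$ gives the two cases in the statement.

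For part (b), I would simply substitute $y = -1$ into the formulas of Lemma~\ref{lem:actiononfixedpoint}(c),(d). In both cases the coefficient of $\iota_w$ carries the factor $-(1+y)/(1-e^{-w\alpha_i})$, which vanishes at $y=-1$. The coefficient of $\iota_{ws_i}$ in $\calT_i(\iota_w)$ becomes $(1-e^{-w\alpha_i})/(1-e^{-w\alpha_i}) = 1$, while the coefficient of $\iota_{ws_i}$ in $\calT_i^\vee(\iota_w)$ becomes $(1-e^{w\alpha_i})/(1-e^{-w\alpha_i})$, which equals $-e^{w\alpha_i}$ by the elementary identity $1-e^{w\alpha_i} = -e^{w\alpha_i}(1-e^{-w\alpha_i})$.

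The only mildly delicate step is the case analysis in part~(a): when $ws_i > w$ one must verify that the two nonzero contributions come from $u=w$ (falling in the case $us_i>u$, since $ws_i>w$) and $u=ws_i$ (falling in the case $us_i<u$, since $(ws_i)s_i = w < ws_i$), each giving coefficient $1$; and when $ws_i < w$ neither of these indices satisfies the required length condition, so the corresponding $\delta$-values all vanish. Beyond this bookkeeping, the whole lemma reduces to substitution into formulas already recorded in Lemmas~\ref{lemma:adjoint} and~\ref{lem:actiononfixedpoint}.
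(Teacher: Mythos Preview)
Your proof is correct. Part~(b) and the first assertion of part~(a) coincide with the paper's approach. For the action of $\partial_i - \id$ on $\calI_w$, the paper's hint is to use the inclusion--exclusion identities~\eqref{equ:idealstru} expressing $\calI_w$ in terms of the $\calO_v$ together with~\eqref{equ:BGGonstru}, whereas you instead extract the coefficients via the pairing $\langle \calO^u, \calI_v\rangle = \delta_{u,v}$ and the self-adjointness of $\partial_i$. Your route avoids the M\"obius-inversion bookkeeping and is arguably cleaner; the paper's route stays entirely on the $B$-side (no opposite classes needed). Both are short and standard.
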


\begin{proof} 
Part~(a) is an easy exercise using
the definition of $\opT_i$ and identities~\eqref{equ:idealstru} 
and~\eqref{equ:BGGonstru}.
Part~(b) is immediate 
from Lemma~\ref{lem:actiononfixedpoint}. 
\end{proof}

\section{Motivic Chern classes}\label{s:MC} We recall the definition
of the motivic Chern classes,
following~\cite{brasselet.schurmann.yokura:hirzebruch}.  For now let
$X$ be a quasi-projective, complex algebraic variety, with an action
of {the complex torus} $T$. First we recall the definition of the
(relative) motivic Grothendieck group $\Groth^T(\var/X)$ of varieties
over $X$, mostly following Looijenga's notes~\cite{looijenga:motivic}
{and Bittner's papers~\cite{bittner:universal,bittner:zeta}}. For
simplicity, we only consider the $T$-equivariant quasi-projective
context, in~\cite{bittner:universal}), which is enough for all applications in
this paper.  The group $\Groth^T(\var/X)$ is the free abelian group
generated by symbols $[f: Z \to X]$ {for isomorphism classes of
$T$-equivariant morphisms $f : Z \to X$, where $Z$ is a
quasi-projective $T$-variety,} modulo the usual additivity relations
\[ [f: Z \to X] = [f: U \to X] + [f:Z  \smallsetminus U \to X] \] for $U
\subseteq Z$ an open invariant subvariety.  If $X=\pt$ then
$\Groth^T(\var/\pt)$ is a ring with the product given by the external
product of morphisms, and the groups $\Groth^T(\var/X)$ have a module
structure over $\Groth^T(\var/\pt)$ also given by the external product.
For every equivariant morphism $g:X \to Y$ of quasi-projective
$T$-varieties there is a {functorial} push-forward $g_!: \Groth^T(\var/X)
\to \Groth^T(\var/Y)$ (given by composition) and pull-back
$g^*:\Groth^T(\var/Y) \to \Groth^T(\var/X)$ (given by fiber product).
Finally there are external products $$\times: \Groth^T(\var/X)\times
\Groth^T(\var/X') \to \Groth^T(\var/X\times X'); \quad [f]\times [f']\mapsto
[f\times f'],$$ which are $\Groth^T(\var/\pt)$-bilinear and commute with
push-forward and pull-back.  In particular push-forward $g_!$ and
pull-back $g^*$ are $\Groth^T(\var/\pt)$-linear.

\begin{remark} 
(Cf.~\cite[\S0]{brasselet.schurmann.yokura:hirzebruch}.)
For every variety $X$, similar functors can be defined on the ring of
constructible functions $\calF(X)$, and the Grothendieck ring
$\Groth(\var/X)$ may be regarded as a motivic version of $\calF(X)$. In
fact, there is a map $e: \Groth(\var/X) \to \calF(X)$ sending $[f:Y \to
  X]$ to $f_!(\one_Y)$, where $f_!(\one_Y)$ is defined using compactly
supported Euler characteristic of the fibers. The map $e$ is a group
homomorphism, and if $X=\pt$ then $e$ is a ring homomorphism. The
constructions extend equivariantly 
{to $\Groth^T(\var/X) \to \calF^T(X)$, with}
$\calF^T(X)\subseteq
\calF(X)$ the subgroup of $T$-invariant constructible functions.
\qede\end{remark}
 
\begin{theorem}\label{thm:existence} 
Let $X$ be a quasi-projective, non-singular, complex algebraic variety
with an action of the torus $T$. There exists a unique natural
transformation $\MC_y: \Groth^T(\var/X) \to K_T(X)[y]$ satisfying the
following properties:
\begin{enumerate} 
\item[(1)] It is functorial with respect to $T$-equivariant proper
  morphisms of non-singular, quasi-projective varieties.
\item[(2)] 
It satisfies the normalization condition 
\[ 
\MC_y[\id_X: X \to X] = \lambda_y(T^*_X) = \sum y^i [\wedge^i T^*_X]_T
\in K_T(X)[y]\/.
\]
\end{enumerate}
The transformation $\MC_y$ satisfies the following properties:
\begin{enumerate}
\item[(3)] It commutes with external products:
\[ 
\MC_y[f\times f': Z\times Z'\to X\times X']=\MC_y[f: Z\to X] \boxtimes
\MC_y[f': Z'\to X'] \:.
\]

\item[(4)] For every smooth, $T$-equivariant morphism $\pi: X \to Y$ of
  quasi-projective and non-singular algebraic varieties, and any $[f:
    Z \to Y] \in \Groth^T(\var/Y)$, the following Verdier-Riemann-Roch
  (VRR) formula holds:
\[ 
\lambda_y(T^*_\pi) \cdot \pi^* \MC_y[f:Z \to Y] = \MC_y[\pi^* f:Z
  \times_Y X \to X] \/.
\]
\end{enumerate} 
\end{theorem}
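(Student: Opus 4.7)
My plan is to follow the strategy of Brasselet--Schürmann--Yokura, adapted to the equivariant setting, which rests on presenting $G_0^T(var/X)$ by smooth projective generators modulo Bittner-type blow-up relations. The essential input is an equivariant version of Bittner's theorem: $G_0^T(var/X)$ is generated as an abelian group by classes $[f: Z \to X]$ with $Z$ smooth quasi-projective and $f$ a proper $T$-equivariant morphism, modulo the blow-up relation
\[
[\mathrm{Bl}_Y Z \to X] - [E \to X] = [Z \to X] - [Y \to X]
\]
whenever $Y \hookrightarrow Z$ is a smooth equivariant closed embedding with exceptional divisor $E$. This presentation follows from equivariant resolution of singularities (available in the torus-equivariant quasi-projective setting) combined with the non-equivariant argument of Bittner; the scissor relations reduce any class to a sum of such smooth proper generators.

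Given this presentation, uniqueness is automatic: if $MC_y$ satisfies (1) and (2), then for any smooth proper generator $[f: Z \to X]$ one must have
\[
MC_y[f: Z \to X] = f_*\bigl(\lambda_y(T^*_Z)\bigr),
\]
by applying functoriality to the factorization $[f] = f_![\mathrm{id}_Z]$ and the normalization on $Z$. This formula then forces the value on every class in $G_0^T(var/X)$.

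For existence, I would \emph{define} $MC_y$ on smooth proper generators by the above formula and check well-definedness against Bittner's blow-up relation. Writing $\pi: \mathrm{Bl}_Y Z \to Z$ for the blowup and $j: E \hookrightarrow \mathrm{Bl}_Y Z$, $i: Y \hookrightarrow Z$ for the embeddings, the verification reduces to the identity
\[
\pi_*\bigl(\lambda_y(T^*_{\mathrm{Bl}_Y Z})\bigr) - (i \circ q)_*\bigl(\lambda_y(T^*_E)\bigr) = \lambda_y(T^*_Z) - i_*\bigl(\lambda_y(T^*_Y)\bigr)
\]
in $K_T(Z)[y]$, where $q: E \to Y$ is the projective bundle projection. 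This is the key computational step and the main obstacle: it requires equivariant analogues of the standard blow-up formulas for $\lambda_y$-classes, which follow from the short exact sequence relating $T^*_{\mathrm{Bl}_Y Z}$ to $\pi^* T^*_Z$ twisted by $\calO(-E)$, together with a $\Kt$-theoretic residue computation along $E \cong \Pbb(N_{Y/Z})$. This calculation is carried out non-equivariantly in \cite{brasselet.schurmann.yokura:hirzebruch}; the equivariant version goes through verbatim because all bundles and maps involved are $T$-equivariant and the projection formula holds equivariantly.

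Once well-definedness is established, properties (1) and (2) hold by construction. Property (3), compatibility with external products, follows from the multiplicativity $\lambda_y(T^*_{Z \times Z'}) = \lambda_y(T^*_Z) \boxtimes \lambda_y(T^*_{Z'})$ on generators, combined with the $G_0^T(var/pt)$-bilinearity of external products. For property (4), the VRR formula, I verify it on a smooth proper generator $[f: Z \to Y]$: forming the equivariant fiber square
\[
\xymatrix{
Z \times_Y X \ar[r]^-{\tilde f} \ar[d]_{\tilde \pi} & X \ar[d]^\pi \\
Z \ar[r]^f & Y
}
\]
one has $T^*_{\tilde\pi} = \tilde f^* T^*_\pi$ since $\pi$ is smooth, and flat base change gives $\pi^* f_* = \tilde f_* \tilde \pi^*$; multiplicativity of $\lambda_y$ and the projection formula then yield
\[
\lambda_y(T^*_\pi) \cdot \pi^* f_*\bigl(\lambda_y(T^*_Z)\bigr) = \tilde f_*\bigl(\lambda_y(T^*_{Z \times_Y X})\bigr),
\]
which is the desired VRR identity on generators. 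The general case follows by $\Z[y]$-linearity.
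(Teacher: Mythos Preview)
Your approach is correct and valid, but organizes the argument differently from the paper. Both ultimately rest on the same computational core---the blow-up identity for $\lambda_y$-classes, traced back to the vanishing result of~\cite[Proposition~3.3]{GNA}---but they use different presentations of $G_0^T(var/X)$.

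The paper works with the presentation by \emph{smooth} (not necessarily proper) generators modulo smooth scissor relations, i.e., the isomorphism $G_0^T(sm/X)\simeq G_0^T(var/X)$. For a smooth $f:U\to X$ it chooses an equivariant SNC compactification $\overline{U}\supseteq U$ with a proper extension $\overline{f}:\overline{U}\to X$, and \emph{defines} $MC_y[U\to X]$ as an alternating sum of $f_{I*}\lambda_y(T^*D_I)$ over the strata $D_I$ of the boundary divisor. Well-definedness (independence of the compactification) is cited from~\cite{feher2018motivic}; the bulk of the paper's argument is then a direct verification of the smooth additivity relation, first for a smooth hypersurface $Y\subseteq U$ and then in general via blow-up along $\overline{Y}$.

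You instead invoke Bittner's stronger presentation by \emph{smooth proper} generators modulo blow-up relations, which lets you define $MC_y$ by the single formula $f_*(\lambda_y(T^*_Z))$ and reduces everything to one blow-up identity. This is cleaner once the presentation is available, but it shifts more weight onto establishing that presentation: Bittner's argument needs not only equivariant resolution of singularities but also the \emph{equivariant weak factorization theorem} (cf.\ the remark following the paper's proof, citing~\cite{abramovich:weak,Bergh:eq-weak}). Your phrase ``combined with the non-equivariant argument of Bittner'' elides this, which is a minor but real gap. Your treatments of (3) and (4) are essentially the same as what the paper sketches.
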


\begin{proof} 
In the non-equivariant case all these statements are proved
in~\cite[Theorem~2.1]{brasselet.schurmann.yokura:hirzebruch}. Feh{\'e}r,
Rim{\'a}nyi and Weber used similar ideas to extend the motivic Chern
classes to the equivariant situation in~\cite{feher2018motivic}. They
proved that there is a well defined class
\[
\MC_y[f:Z \to X] \in K_T(X)[y]
\]
for $Z$ and $X$ smooth, but omitted the details showing functoriality
properties, referring instead back
to~\cite{brasselet.schurmann.yokura:hirzebruch}. For completeness, we
prove this theorem, using the definition (and well-definedness)
from~\cite{feher2018motivic}.  First one gets as
in~\cite{bittner:universal} a tautological isomorphism
\[
\Groth^T(\sm/X)\to \Groth^T(\var/X): [f:Z \to X]\mapsto [f:Z \to X]\:,
\]
where $\Groth^T(\sm/X)$ is the corresponding (relative) motivic
Grothendieck group of {\em smooth\/} quasi-projective $T$-varieties
mapping to $X$, with the corresponding `additivity' relation only
asked for $T$-invariant open subsets $U\subseteq Z$ with $Z\smallsetminus
U$ a smooth closed subvariety of $Z$.  Here the inverse  image of $[f:Z
  \to X] \in \Groth^T(\var/X)$ in $\Groth^T(\sm/X)$ for $Z$ possibly singular
is defined as
\[
[f:Z \to X] \mapsto \sum_i \: [f:Z_i \to X]
\]
where $Z= \bigsqcup Z_i$ is a decomposition into a finite disjoint
union of $T$-invariant (locally closed) smooth subvarieties
$Z_i\subseteq Z$.  Using this presentation, $\MC_y$ is determined by
its value on $\Groth^T(\sm/X)$: i.e., we have $\MC_y[f:Z \to X] := \sum_i
\: \MC_y[f:Z_i \to X]$ with notation as above. If $U$ is a smooth
quasi-projective $T$-variety we consider the definition of $\MC_y[f: U
  \to X]$ from~\cite{feher2018motivic} and show that it satisfies the
corresponding `additivity' relation. Let $f: U \to X$ be an
equivariant morphism with $U$ quasi-projective and non-singular. Then
there exists a non-singular quasi-projective algebraic variety $\ovU$
containing~$U$ such that the following are satisfied:
\begin{itemize} 
\item $\ovU$ admits an action of $T$ and the inclusion $i:U
  \to \ovU$ is an open, $T$-equivariant embedding,

\item the boundary $D:=\ovU \smallsetminus U =
  \bigcup_{i=1,\dots,s} D_i$ is a $T$-invariant simple normal crossing
  divisor with smooth irreducible components $D_i$,

\item there exists a {\em proper\/} $T$-equivariant morphism $\ovf:
  \ovU \to X$ such that $f = \ovf \circ i$
(such a morphism $\ovf$ is then also projective, by 
e.g.,~\cite[Lemma 28.41.13]{StacksProj}).
\end{itemize}
This follows as in~\cite[p.~544]{weber:HBB} from the
existence of a $T$-equivariant projective completion~\cite{sumihiro:equivariant},
\cite[Theorem~5.1.25]{chriss2009representation} and  from 
equivariant resolution of singularities 
\cite{bierstone:resolution}.
 Then `additivity and normalization' forces
\begin{equation}\label{def:MC}
\MC_y[f: U \to X]:=\sum_{I\subseteq \{1,\dots,s\}}\:
(-1)^{|I|}f_{I*}\lambda_y(T^*D_I)\:,
\end{equation}
with $D_I:=\cap_{i\in I} D_i$ (and $D_{\emptyset}:=\ovU$) and $f_I:=
\ovf|_{D_I}$. By~\cite[\S2.4]{feher2018motivic} the right-hand side is
independent of all choices. In particular $\MC_y$ satisfies the
normalization property from part~(2) of the statement.  We show next
that the transformation $ \MC_y$ satisfies the corresponding
`additivity' property. Let $Y\subseteq U$ be a closed $T$-invariant
subvariety.  By induction on the number of connected components of
$Y$, we can assume $Y$ is connected. Then one can find as
in~\cite{bittner:universal} a partial compactification of $U$ and $f$
as before in such a way that the closure $\overline{Y}$ of $Y$ in
$\ovU$ is smooth and $\overline{Y}$ has normal crossing with $D$. Then
$ \overline{Y}$ and $ \ovf|_{\overline{Y}}$ is such a partial
compactification of $Y$ and $f|_Y$ with the corresponding simple normal
crossing divisor $\overline{Y} \cap D$. Let us now first assume that
$Y$ is hypersurface in $U$, so that $\overline{Y} \cup D =
\overline{Y} \cup \bigcup_{i=1,\dots,s} D_i$ is a simple normal
crossing divisor in $\ovU$.  Then one can use $\ovU$ and $\ovf$ as a
partial compactification of $f: U\smallsetminus Y\to X$, with the
corresponding simple normal crossing divisor $\overline{Y} \cup D$.
 Using in this context the definition given in~\eqref{def:MC}, one gets
 precisely the sought-for `additivity' property in the case of
a hypersurface $Y$ in~$U$.
 In general, let $\widetilde{U}$ be the
blow-up of $\ovU$ along $\overline{Y}$, with exceptional divisor $E$
and $\widetilde{D}_i$ the strict transform of $D_i$ ($i=1,\dots,s$),
as well as $\tilde{f}: \widetilde{U}\to X$ the induced proper
morphism. Then $\widetilde{D}_I$ is the blow-up of $D_I$ along
$\overline{Y}_I:=\overline{Y}\cap D_I$ with exceptional divisor
$E_I:=E \cap \widetilde{D}_I$ for $I\subseteq \{1,\dots,s\}$. As
observed in~\cite[Corollary~0.1 and
  p.~8]{brasselet.schurmann.yokura:hirzebruch} and~\cite[\S2.4]
{feher2018motivic}, the key equality needed is the following `blow-up
relation',
\[
\MC_y[\tilde{f}: \widetilde{D}_I \to X] -\MC_y[\tilde{f}: E_I \to X] =
\MC_y[\ovf: D_I \to X] -\MC_y[\ovf: \overline{Y}_I \to X] \/;
\]
the heart of its proof relies on vanishing of certain sheaf cohomology
groups proved in~\cite[Proposition~3.3]{GNA}. 
Using the blow-up
relations and the additivity from the hypersurface case for the partial
compactification $\widetilde{X}$ of $U\smallsetminus Y$, with corresponding
simple normal crossing divisor $E \cup \widetilde{D}$,  proves the `additivity' property in general. This construction
shows that the transformation $\MC_y$ is determined by its image on
classes $[f:Z \to X]$ where $Z$ is a {\em non-singular}, irreducible,
quasi-projective algebraic variety and $f$ is a $T$-equivariant {\em
  proper} morphism.

To prove part~(1) of the statement, i.e., functoriality with respect
to $T$-equivariant proper morphisms, observe that if $g: X\to X'$ is a
proper equivariant morphism of smooth quasi-projective $T$-varieties,
then $\ovU$ and $\overline{g\circ f}:=g\circ \ovf$ is a partial
compactification of $g\circ f: U\to X'$, with $(g\circ f)_I=g\circ
f_I$ such that
\[
\MC_y[g\circ f: U \to X'] =\sum_{I\subseteq \{1,\dots,s\}}\:
(-1)^{|I|}g_*f_{I*}\lambda_y(T^*D_I)=g_*\left( \MC_y [f: U \to
  X]\right) \:.
\]
With the construction of $\MC_y$ given above, the proofs of parts (3)
and (4) of the statement follow as in the non-equivariant case
of~\cite[Theorem~2.1]{brasselet.schurmann.yokura:hirzebruch}, by
making all $\Kt$-theory classes and morphisms equivariant.
\end{proof}

If one forgets the $T$-action, then the equivariant motivic Chern
class recovers the non-equivariant motivic Chern class from
\cite{brasselet.schurmann.yokura:hirzebruch}, either by its
construction, or by the properties (1)-(2) from
Theorem~\ref{thm:existence} and the corresponding results
from~\cite{brasselet.schurmann.yokura:hirzebruch}. Further,
Theorem~\ref{thm:existence} and its proof work more generally for a
possibly singular, quasi-projective $T$-equivariant base variety $X$,
provided one works with the Grothendieck group $K_0^T(X)$ of
$T$-equivariant coherent $\calO_X$-modules; then one obtains $\MC_y:
\Groth^T(\var/X) \to K_0^T(X)[y]$.

Since most of the time the variety $X$ will be understood from the
context, for $Z \subseteq X$ a (not necessarily closed) subvariety
we use the notation
\[ 
\MC_y(Z) := \MC_y[Z \hookrightarrow X] \/. 
\] 
By functoriality, if $Z \subseteq X$ is a {smooth} closed subvariety, then
$\MC_y[i:Z \hookrightarrow X]= i_*(\lambda_y(T^*Z) )$
as elements in $K_T(X)$. We will often suppress the push-forward
notation.

{\begin{remark} There are some differences between the definition 
of the relative equivariant Grothendieck group of varieties in 
\cite{looijenga:motivic,bittner:universal,bittner:zeta}, and hypotheses 
used therein, and those used in this paper. For instance, 
\cite{bittner:universal,bittner:zeta} use finite groups $G$ with a
`good' action; we use a torus $T$ in the complex quasi-projective
context, but can work similarly with a complex linear algebraic group
$G$. {Bittner} also divides by an additional `projective bundle
relation', stating that for a $G$-equivariant projective bundle
$\bbP(V)\to Z$ over a relative $G$-variety $Z\to X$:
\[
[\bbP(V) \to Z \to X] = [\bbP^{\rk(V) -1} \times Z\to Z \to X] \:,
\]
where on the right-hand side $G$ only acts on $Z$ and $X$. This is 
not needed in this paper; we will show in future work
that the motivic Chern class also 
factorizes over this additional relation. Despite these differences, the 
proof of Theorem~\ref{thm:existence} applies to all these contexts, 
following the ideas from {\it loc.~cit.} and \cite{{feher2018motivic}}. 
At the heart of the arguments is the fact that $\Groth^G(\var/X) \simeq 
\Groth^G(\sm/X)$, together with results on equivariant completion,  
equivariant resolution of singularities and an equivariant weak 
factorization theorem 
\cite{sumihiro:equivariant,bierstone:resolution,abramovich:weak,Bergh:eq-weak} 
as used in~\cite{{feher2018motivic}}. If one also divides by the 
'projective bundle relation', then one can also define a {\em motivic 
duality\/} involution on the Grothendieck group (localized at the class 
of the affine line), which commutes under the motivic Chern class
transformation with the Grothendieck-Serre duality involution. For a 
discussion of this involution see~\cite[\S5C]{schurmann2009characteristic} and~\cite[p.~240]{DiMu}; 
also cf.~\eqref{MC-duality} below.
\qede\end{remark}}

The following general lemma is useful.
\begin{lemma}\label{lemma:int} 
Let $X_1, X_2 \subseteq X$ be three $T$-equivariant varieties. 
The following equalities hold in~$K_T(X)[y]$.

(a) The inclusion exclusion formula: 
\[ 
\MC_y(X_1 \cup X_2) = \MC_y(X_1) +
\MC_y(X_2) - \MC_y(X_1 \cap X_2) \/.
\]

(b) If $X_1, X_2, X$ are smooth, and $X_1,X_2$ intersect transversally
(so that $X_1\cap X_2$ is also smooth), then
\begin{equation}\label{eq:motint}
\MC_y (X_1 \cap X_2) = \frac{ \MC_y(X_1) \MC_y (X_2)}
{\MC_y (X)}\/.
\end{equation}

(c) More generally, \eqref{eq:motint} holds if $X_1$, $X_2$ are unions of
smooth hypersurfaces such that $X_1\cup X_2$ is a divisor with simple
normal crossings.
\end{lemma}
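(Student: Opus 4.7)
For (a), this is a direct consequence of additivity of $MC_y$. In $G_0^T(var/X)$, decomposing $X_1\cup X_2 = (X_1\setminus X_2)\sqcup (X_2\setminus X_1)\sqcup (X_1\cap X_2)$ and using the scissor relations twice gives
\[
[X_1 \cup X_2 \hookrightarrow X] + [X_1 \cap X_2 \hookrightarrow X] = [X_1 \hookrightarrow X] + [X_2 \hookrightarrow X],
\]
and applying the group homomorphism $MC_y$ yields the identity.

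For (b), set $Z := X_1 \cap X_2$ and denote the inclusions $i_j\colon X_j \hookrightarrow X$, $\tilde i_j\colon Z \hookrightarrow X_j$, $i\colon Z \hookrightarrow X$. The plan is to compute $MC_y(X_1)\cdot MC_y(X_2)$ by combining the base-change formula with the projection formula. Transversality makes the fiber square with these four inclusions Tor-independent, so $i_1^*(i_2)_* = (\tilde i_1)_*(\tilde i_2)^*$, and two applications of the projection formula give
\[
MC_y(X_1)\cdot MC_y(X_2) \;=\; i_*\bigl(\lambda_y(T^*X_1|_Z)\cdot\lambda_y(T^*X_2|_Z)\bigr).
\]
The key algebraic step is the transversality short exact sequence
\[
0 \to T^*X|_Z \to T^*X_1|_Z \oplus T^*X_2|_Z \to T^*Z \to 0,
\]
dual to $0\to TZ \to TX_1|_Z\oplus TX_2|_Z \to TX|_Z \to 0$ with maps $v\mapsto(v,v)$ and $(u,w)\mapsto u-w$. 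Multiplicativity of $\lambda_y$ on short exact sequences converts this into $\lambda_y(T^*X_1|_Z)\lambda_y(T^*X_2|_Z) = i^*\lambda_y(T^*X)\cdot\lambda_y(T^*Z)$, and a final application of the projection formula delivers $MC_y(X_1)MC_y(X_2)=MC_y(X)\cdot MC_y(Z)$, which is the content of (b) (interpreted as a multiplicative identity, so that the division in \eqref{eq:motint} is really $MC_y(X)\cdot MC_y(Z) = MC_y(X_1)\cdot MC_y(X_2)$).

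For (c), the plan is to reduce the SNC case to iterated applications of (a) and (b). Write $X_k = \bigcup_{i\in A_k}H_i$ for the smooth hypersurface components, with $A_1\cap A_2 = \emptyset$ (this disjointness is implicit in the hypothesis, as the identity fails on shared components). SNC guarantees that each nonempty intersection $H_I := \bigcap_{i\in I}H_i$ is smooth of codimension $|I|$, and that every pair $(H_I, H_J)$ with $I\subseteq A_1$, $J\subseteq A_2$ meets transversally. Expanding $MC_y(X_1)$ and $MC_y(X_2)$ by iterated (a) as alternating sums over the $H_I$, and combining by iterated (b), yields
\[
MC_y(X_1)\cdot MC_y(X_2) \;=\; MC_y(X)\sum_{\substack{\emptyset\neq I\subseteq A_1\\ \emptyset\neq J\subseteq A_2}}(-1)^{|I|+|J|}\,MC_y(H_{I\cup J}).
\]
On the other side, inclusion-exclusion applied to $X_1\cap X_2 = \bigcup_{(i,j)\in A_1\times A_2}(H_i\cap H_j)$, followed by grouping the resulting terms according to $(\pi_1(S),\pi_2(S))=(I,J)$, reduces the equality to the elementary Möbius identity
\[
\sum_{\substack{S\subseteq I\times J\\ \pi_1(S)=I,\,\pi_2(S)=J}}(-1)^{|S|-1} \;=\; (-1)^{|I|+|J|},
\]
a short inclusion-exclusion check on the two coordinate projections.

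The main obstacle is the iterated use of (b) in part (c): each step requires that the partial intersection $H_I$ currently in play be smooth of expected codimension and meet the next component $H_j$ transversally. The SNC hypothesis is precisely what ensures this at every stage, so it is the essential geometric input; once it is in place, parts (a) and (b) together with the combinatorial identity above assemble mechanically into the stated formula.
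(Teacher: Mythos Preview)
Your proof is correct. Parts (a) and (b) match the paper's approach: (a) is additivity in the Grothendieck group, and (b) uses the standard short exact sequence for a transversal intersection (the paper phrases it via the normal bundle $N_{X_1\cap X_2}X = (N_{X_1}X \oplus N_{X_2}X)|_{X_1\cap X_2}$, which is dual to your cotangent sequence, so the arguments are the same).

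For part (c) you take a somewhat different route from the paper. The paper simply says the result ``follows from repeated application of (a) and (b), using inclusion-exclusion and induction on $\dim X$ and on the number of components of $X_1\cup X_2$''; the idea behind the induction on $\dim X$ is presumably that after peeling off one component $H$ from $X_1$ via (a), the term $MC_y(H\cap X_2)$ involves a union of SNC divisors inside the smaller ambient space $H$. Your argument instead expands both sides fully by inclusion-exclusion over the components, applies (b) termwise to the products $MC_y(H_I)\cdot MC_y(H_J)$, and reduces everything to the combinatorial identity
\[
\sum_{\substack{S\subseteq I\times J\\ \pi_1(S)=I,\ \pi_2(S)=J}} (-1)^{|S|-1} = (-1)^{|I|+|J|},
\]
which is easily checked. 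This is a legitimate and more explicit alternative that avoids the double induction, at the price of a bit more bookkeeping. Your observation that $A_1\cap A_2=\emptyset$ must be assumed (otherwise the identity fails, e.g.\ for $X_1=X_2=H$) is correct and worth noting; the paper leaves this implicit, but in the only application (the Bott--Samelson calculation) the two divisors indeed share no components.
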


\begin{proof} 
The statement in (a) is immediate from the additivity property in the
Grothendieck group. Part (b) follows from standard exact sequences,
using that the normal bundle $N_{X_1 \cap X_2} X$ is the restriction
to $X_1 \cap X_2$ of $N_{X_1} X \oplus N_{X_2} X$. Finally, part (c)
follows from repeated application of (a) and (b), using
inclusion-exclusion and induction on $\dim X$
and on the number of components of $X_1 \cup X_2$.
\end{proof}

\begin{remark}
In part (a), the scheme structure on the union $X_1\cup X_2$ is
irrelevant; in fact, $\MC_y[Z\hookrightarrow X]=
\MC_y[Z_{red}\hookrightarrow X]$ since both classes equal
$\MC_y(X)-\MC_y(X\smallsetminus Z)$.   
In parts (b) and (c) we work in the ring of formal series in $y$, which 
allows us to invert the class $\MC_y(X) = 1 + \sum_{k>0} y^k [\wedge^k T^*X]$;
the right-hand side must actually land in $K_T(X)[y]$, since it equals the left-hand side.
When $X=G/B$, the inverse of $\MC_y(X)$ can also be calculated from
Remark~\ref{rmk:lambdayprod}.  
\qede\end{remark}

\begin{remark}\label{rmk:SMC}  An alternative formulation of 
Lemma~\ref{lemma:int}(b) may be given in terms of motivic Segre classes 
$\SMC_y(X_i) := \frac{\MC_y(X_i)}{\MC_y(X)}$: 
\[ 
\SMC_y(X_1 \cap X_2) = \SMC_y(X_1) 
\cdot \SMC_y(X_2) \/. 
\]
A statement generalizing this formula for $X_i$ possibly singular and under 
a Whitney transversality assumption will be proved in upcoming work.
In the (co)homological case, i.e.~after replacing the motivic Chern
classes by the CSM classes, the analogue of this statement
was proved in~\cite{schurmann:transversality}.
\qede\end{remark}

\section{Motivic Chern classes of Schubert cells via Demazure-Lusztig operators}\label{ss:MCDL} 
In this section we calculate the motivic Chern classes of
Schubert cells in $X=G/B$, using Demazure-Lusztig operators.

We use the definitions and notation from~\S\ref{s:BS}. Fix a
word $(i_1, \ldots , i_k)$ and let $Z:=Z_{i_1, \ldots , i_k}$ and
$Z':=Z_{i_1, \ldots , i_{k-1}}$ be the corresponding Bott-Samelson
varieties. Recall that we have determined a section $\sigma:Z' \to Z$ 
of the projection $\pi:Z \to Z'$, and let $D:={D_k=} \sigma(Z')$. 
The `boundary' $\partial Z := \pi^{-1}(\partial Z') \cup D$ is a
simple normal crossings divisor.
We have the diagram
\[
\xymatrix@R=12pt@C=12pt{
Z \ar[rr]^-\theta \ar[dd]^\pi   & & G/B \ar[dd]^{p_{i_k}} \\
& \square
\\ 
Z' \ar@/^1pc/[uu]^\sigma \ar[rr]^-{p_{i_k}\circ\, \theta'} & & G/P_{i_k}
}
\]
If $w=s_{i_1} \cdot \ldots \cdot s_{i_k}$ is a reduced decomposition,
then $Z$ is a resolution of the Schubert variety~$X(w)$
(Proposition~\ref{prop:resolution}). But note that the construction of
the Bott-Samelson variety for $(i_1, \ldots , i_k)$ can be carried out
even if the word is non-reduced.  Our main theorem is:

\begin{theorem}\label{thm:DLrecursion} 
Let $(i_1, \ldots , i_k)$ be a (possibly non-reduced) word. Then 
\[ 
\theta_* \MC_y[Z\smallsetminus \partial Z\hookrightarrow Z] 
= \opT_{i_k} \theta'_* \MC_y[Z' \smallsetminus \partial Z'\hookrightarrow Z'] \/,
\] 
as elements in $K_T(G/B)[y]$. In particular, if $w \in W$ and $s_i$ is
a simple reflection such that $ws_i > w$, then
\[ 
\MC_y[X(ws_i)^\circ \hookrightarrow G/B]= \opT_i \MC_y[X(w)^\circ
  \hookrightarrow G/B] \/.
\]
\end{theorem}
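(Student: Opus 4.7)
The first statement is the substantive one; the second follows by combining it with Proposition~\ref{prop:resolution} applied to a reduced word for $ws_i$ ending in~$s_i$ (so that $Z\setminus \partial Z\cong X(ws_i)^\circ$ and $Z'\setminus \partial Z'\cong X(w)^\circ$). So I focus on the first identity. The plan is to exploit the two key properties of $MC_y$ from Theorem~\ref{thm:existence}: additivity on locally closed pieces, and the Verdier-Riemann-Roch formula along the smooth morphism $\pi:Z\to Z'$, together with projection formula and flat base change to migrate everything to $G/B$.

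First I set $U':=Z'\setminus\partial Z'$ and observe that $Z\setminus\partial Z=\pi^{-1}(U')\setminus(D\cap\pi^{-1}(U'))$. Since $D\to Z'$ is a section of $\pi$, the restriction $\sigma|_{U'}:U'\to D\cap\pi^{-1}(U')$ is an isomorphism. Additivity of $MC_y$ and functoriality under proper push-forward (applied to $\sigma$) give, in $K_T(Z)[y]$,
\[
MC_y(Z\setminus\partial Z)=MC_y[\pi^{-1}(U')\hookrightarrow Z]-\sigma_*\,MC_y[U'\hookrightarrow Z'].
\]
For the first term, the inclusion $\pi^{-1}(U')\hookrightarrow Z$ is the pull-back of $U'\hookrightarrow Z'$ along the smooth equivariant map $\pi$, so VRR (Theorem~\ref{thm:existence}(4)) gives
\[
MC_y[\pi^{-1}(U')\hookrightarrow Z]=\lambda_y(T^*_\pi)\cdot\pi^*MC_y[U'\hookrightarrow Z'].
\]

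Next I push forward along $\theta=pr_1\circ\theta_1$. Since $\pi$ is the base change of $pr_2$ along $\theta'$, and $pr_2$ is the base change of $p_{i_k}$ along $p_{i_k}$, the relative cotangent bundles pull back: $T^*_\pi=\theta_1^*T^*_{pr_2}=\theta_1^*pr_1^*T^*_{p_{i_k}}=\theta^*T^*_{p_{i_k}}=\theta^*\calL_{\alpha_{i_k}}$. Writing $\alpha:=MC_y[U'\hookrightarrow Z']\in K_T(Z')[y]$ and using the projection formula,
\[
\theta_*\bigl(\lambda_y(T^*_\pi)\cdot\pi^*\alpha\bigr)=\lambda_y(\calL_{\alpha_{i_k}})\cdot\theta_*\pi^*\alpha
=\lambda_y(\calL_{\alpha_{i_k}})\cdot(pr_1)_*\theta_{1*}\pi^*\alpha.
\]
The left square of~\eqref{E:BSconst} is Cartesian with $pr_2$ (hence $\pi$) a proper smooth $\Pbb^1$-bundle, so flat base change gives $\theta_{1*}\pi^*=pr_2^*\theta'_*$; combined with $(pr_1)_*pr_2^*=\partial_{i_k}$ in the right Cartesian square, this yields $\theta_*\pi^*=\partial_{i_k}\theta'_*$. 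For the other summand, $\sigma$ maps $Z'$ into $D$ and $\theta_1(D)\subseteq\caD$, so $\theta\circ\sigma=pr_1\circ\theta_1\circ\sigma=\theta'$, giving $\theta_*\sigma_*=\theta'_*$.

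Putting the two pieces together,
\[
\theta_*MC_y(Z\setminus\partial Z)=\lambda_y(\calL_{\alpha_{i_k}})\,\partial_{i_k}\theta'_*MC_y(U')-\theta'_*MC_y(U')=\calT_{i_k}\bigl(\theta'_*MC_y(Z'\setminus\partial Z')\bigr),
\]
which is the first assertion. The step I expect to require the most care is the base-change identification $\theta_{1*}\pi^*=pr_2^*\theta'_*$ in equivariant K-theory of coherent sheaves: this requires the left square to be Tor-independent (automatic here because $pr_2$ is smooth) and the proper pushforward of $\pi$ to be compatible with derived pullback, which in our setting follows because $\pi$ is a flat projective $\Pbb^1$-bundle. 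Everything else is a careful bookkeeping of the projection formula and of the identifications $T^*_\pi=\theta^*\calL_{\alpha_{i_k}}$ and $\theta\circ\sigma=\theta'$.
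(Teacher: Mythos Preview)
Your proof is correct and takes a genuinely different, more economical route than the paper's.

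The paper computes $MC_y(Z\setminus\partial Z)=MC_y(Z)-MC_y(\partial Z)$ and then expands $MC_y(\partial Z)=MC_y(\pi^{-1}(\partial Z')\cup D)$ via the simple-normal-crossing intersection formula of Lemma~\ref{lemma:int}(c). This produces a term $\frac{MC_y(D)}{MC_y(Z)}$ which must be simplified using the conormal bundle of $D$ in $Z$ and the identification of the normal bundle of the diagonal in Lemma~\ref{lemma:diag}. Only after this algebra does one reach the expression $\kappa=\lambda_y(T^*_\pi)\pi^*\kappa'-\pi^*\kappa'\otimes\calO_D$, which is then pushed forward exactly as you do.

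You bypass all of this by decomposing along the fibration first: writing $Z\setminus\partial Z=\pi^{-1}(U')\setminus\sigma(U')$ and applying additivity plus VRR directly gives $MC_y(Z\setminus\partial Z)=\lambda_y(T^*_\pi)\pi^*MC_y(U')-\sigma_*MC_y(U')$ in one step. Neither Lemma~\ref{lemma:int}(c) nor Lemma~\ref{lemma:diag} is needed. The remaining push-forward argument (projection formula, flat base change $\theta_{1*}\pi^*=pr_2^*\theta'_*$, and the identification $\theta\circ\sigma=\theta'$) is the same in both approaches. What the paper's route buys is an explicit closed-form expression for $MC_y(Z\setminus\partial Z)$ inside $K_T(Z)[y]$ in terms of the SNC boundary data, which might be of independent use; your route buys a shorter path to the theorem itself.
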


\begin{proof} 
The second claim follows from the first. Indeed, take any reduced word
$(i_1,\ldots, i_{k-1})$ for $w\in W$, so that $(i_1,\ldots,
i_{k-1},i)$ is a reduced word for $ws_i$.  The restrictions $\theta:Z
 \smallsetminus \partial Z \to X(ws_i)^\circ$ and $\theta':Z' \smallsetminus
\partial Z' \to X(w)^\circ$ are (equivariant) isomorphisms and, by
functoriality, $\theta_*\MC_y[Z \smallsetminus \partial Z\hookrightarrow Z] =
\MC_y[X(ws_i)^\circ \hookrightarrow G/B]$ and $\theta'_*\MC_y[Z'
\smallsetminus \partial Z'\hookrightarrow Z'] 
= \MC_y[X(w)^\circ \hookrightarrow G/B]$.

We now prove the first assertion\footnote{We are especially grateful to one referee for
suggesting a simplification of our original argument.}. 
By the inductive construction of $\partial Z$,
\[
\MC_y[Z\smallsetminus \partial Z\hookrightarrow Z]
=\MC_y[\pi^{-1}(Z'\smallsetminus \partial Z')\hookrightarrow Z]
-\sigma_* \MC_y[Z'\smallsetminus \partial Z'\hookrightarrow Z']\/.
\]
By the VRR formula in Theorem~\ref{thm:existence}~(4),
\[
\MC_y[\pi^{-1}(Z'\smallsetminus \partial Z')\hookrightarrow Z]
=\lambda_y(T^*_\pi)\, \pi^* \MC_y[Z'\smallsetminus \partial Z'\hookrightarrow Z']\/.
\]
Since $T^*_\pi=\theta^* T^*_{p_{i_k}}$, using the projection formula and the 
base change formula $\theta_* \pi^*=(p_{i_k})^*(p_{i_k}\circ \theta')_*$ gives
\begin{align*}
\theta_* \MC_y[\pi^{-1}(Z'\smallsetminus \partial Z')\hookrightarrow Z]
&=\lambda_y(T^*_{p_{i_k}}) (p_{i_k})^*(p_{i_k})_* \theta'_* 
\MC_y[Z'\smallsetminus \partial Z'\hookrightarrow Z'] \\
&=(\lambda_y(T^*_{p_{i_k}})\, \partial_{i_k}) \theta'_* 
\MC_y[Z'\smallsetminus \partial Z'\hookrightarrow Z'] 
\end{align*}
As $\theta\circ \sigma = \theta'$, we obtain
\begin{align*}
\theta_* \MC_y[Z\smallsetminus \partial Z\hookrightarrow Z]
&=(\lambda_y(T^*_{p_{i_k}}) \partial_{i_k} - \id) \theta'_* 
\MC_y[Z'\smallsetminus \partial Z'\hookrightarrow Z']  \\
&=\opT_{i_k} \theta'_* 
\MC_y[Z'\smallsetminus \partial Z'\hookrightarrow Z']
\end{align*}
as stated.
\end{proof}

We record the following corollary.

\begin{corol}\label{cor:lesswsi} 
Let $w \in W$ and let $s_i$ be a simple reflection. Then 
\[ 
\opT_i (\MC_y  (X(w)^\circ)) =
\begin{cases} 
\MC_y( X(ws_i)^\circ) & \textrm{ if } ws_i>w ;\\ 
-(y+1) \MC_y(X(w)^\circ ) - y \MC_y (X(ws_i )^\circ ) & 
\textrm{ if } ws_i < w \/. 
\end{cases}
\]
\end{corol}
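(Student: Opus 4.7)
The first case ($ws_i > w$) is an immediate restatement of Theorem~\ref{thm:DLrecursion}, so nothing needs to be done there. For the second case ($ws_i < w$), the plan is to reduce to the first case by applying the quadratic Hecke relation from Proposition~\ref{prop:hecke-relations}.

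Concretely, set $w' := ws_i$, so that $\ell(w') < \ell(w)$ and $w's_i = w > w'$. Then the first case of the corollary (i.e., Theorem~\ref{thm:DLrecursion}) gives
\[
\calT_i\bigl(MC_y(X(w')^\circ)\bigr) = MC_y(X(w)^\circ).
\]
Applying $\calT_i$ to both sides yields $\calT_i\bigl(MC_y(X(w)^\circ)\bigr) = \calT_i^2\bigl(MC_y(X(w')^\circ)\bigr)$. Now invoke the quadratic relation~\eqref{eq:quad}, which rewrites as $\calT_i^2 = -(1+y)\calT_i - y\cdot\mathrm{id}$. Substituting and using once more that $\calT_i(MC_y(X(w')^\circ)) = MC_y(X(w)^\circ)$ gives
\[
\calT_i\bigl(MC_y(X(w)^\circ)\bigr) = -(1+y)\,MC_y(X(w)^\circ) - y\,MC_y(X(w')^\circ),
\]
which is the desired formula, since $w' = ws_i$.

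There is essentially no obstacle here: the whole content of the corollary was packaged into Theorem~\ref{thm:DLrecursion} and into Lusztig's quadratic relation. The only thing to check is that one applies the quadratic relation in the right direction (starting from the shorter element $w' = ws_i$ so that Theorem~\ref{thm:DLrecursion} is applicable), and this is immediate.
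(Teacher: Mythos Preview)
Your proof is correct and follows essentially the same approach as the paper: the first case is Theorem~\ref{thm:DLrecursion}, and for $ws_i<w$ you write $MC_y(X(w)^\circ)=\calT_i(MC_y(X(ws_i)^\circ))$, apply $\calT_i$ again, and use the quadratic relation $\calT_i^2=-(y+1)\calT_i-y\cdot\mathrm{id}$ from Proposition~\ref{prop:hecke-relations}.
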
 

\begin{proof} 
The identity from the $ws_i > w$ branch was proved in
Theorem~\ref{thm:DLrecursion}. Assume that $ws_i <w$. Then the same
result shows that $\MC_y(X(w)^\circ) = \opT_i (\MC_y(X(ws_i)^\circ))$,
thus
\[  
\opT_i (\MC_y (X(w)^\circ)) = \opT_i^2 (\MC_y (X(ws_i)^\circ)) \/. 
\]
By the quadratic relations from
Proposition~\ref{prop:hecke-relations}, $\opT_i^2 = - (y+1)\opT_i -
y \cdot \id$. Now we apply the right-hand side to $\MC_y
(X(ws_i)^\circ)$, using again Theorem~\ref{thm:DLrecursion} and that
$(ws_i) s_i > ws_i$.
\end{proof}

\begin{remark}
In particular, setting $y=-1$:
\[ 
\opT_i|_{y=-1} (\MC_{-1}  (X(w)^\circ)) =\MC_{-1}  (X(ws_i)^\circ)
\]
regardless of whether $ws_i$ precedes or follows $w$ in the Bruhat order. 
Combined with Lemma~\ref{lemma:yspec}, this implies that 
\[ 
\MC_{-1}(X(w)^\circ) = \iota_w \/, 
\] 
the class of the fixed point $e_w$.
The corresponding statement holds for the CSM class in (co)homology,
cf.~\cite[Proposition~6.5(d)]{aluffi.mihalcea:eqcsm}.
\qede\end{remark}

\begin{remark}
Recall that if $w=s_{i_1}\cdots s_{i_k}$ is a reduced decomposition, the 
operator $\opT_w := \opT_{i_1}\cdots \opT_{i_k}$ is well-defined
(cf.~\eqref{eq:Tv}). With this notation,
\[
\MC_y (X(w)^\circ)=\opT_{w^{-1}}(\calO_{\id})
\]
as a consequence of Theorem~\ref{thm:DLrecursion}.
\qede\end{remark}

\begin{example}\label{ex:P1} 
The equivariant motivic Chern classes for $\mathbb{P}^1$ are: 
\[ 
\MC_y(X(\id)) = \calO_{\id}; \quad \MC_y(X(s)^\circ) = (1+ e^{-\alpha_1} y)
\calO_{\mathbb{P}^1} - (1 + (1+ e^{-\alpha_1})y) \calO_{\id} \/. 
\]
To recover the non-equivariant classes from the equivariant ones one 
substitutes $e^\lambda \mapsto 1$ for each weight $\lambda$. 
For instance, the non-equivariant motivic Chern class of 
$X(s)^\circ \subset \mathbb{P}^1$ is 
\[ 
\MC_y(X(s)^\circ) = (1+ y)\calO_{\mathbb{P}^1} - (1 + 2y) \calO_{\id} \/.
\]
(Note that this recovers the examples of the classes of $X(s_1)^\circ$ 
and $X(s_2)^\circ$ given in the introduction.)
\qede\end{example}

\begin{example}\label{ex:FL3} The equivariant motivic Chern classes for larger 
flag manifolds are much more complicated. For instance, the equivariant motivic 
Chern class of the Schubert cell $X(s_1 s_2)^\circ \subset \Fl(3)$ is 
\[ 
\begin{split} 
\MC_y(X(s_1s_2)^\circ) = & (1+ e^{-\alpha_1}y)(1+ e^{-(\alpha_1 
+ \alpha_2)}y) \calO_{s_1 s_2} - \\ & (1+ e^{-\alpha_1}y)(1+(1+ e^{-(\alpha_1
+ \alpha_2)})y) \calO_{s_1}  - \\ & (1 + (1 + e^{-\alpha_1})(1+e^{-\alpha_2})y 
+ e^{-\alpha_2}(1+ e^{-\alpha_1}+ e^{-2\alpha_1})y^2)\calO_{s_2} + \\ 
& (1 + (2+ e^{-\alpha_1} + e^{-\alpha_2} + e^{-(\alpha_1+\alpha_2)})y) \calO_{\id}
+ \\ & (1+ e^{-\alpha_1} + e^{-\alpha_2} + e^{-(\alpha_1+\alpha_2)} + 
e^{-(2 \alpha_1 + \alpha_2)})y^2\calO_{\id} \/. 
\end{split} 
\]
\qede\end{example}

\subsection{Motivic Chern classes in $G/P$} 
Let $P \supset B$ be a parabolic subgroup containing $B$ and let $W^P
\subseteq W$ be the subset of minimal length representatives for
$W/W_P$, the quotient of $W$ by the subgroup $W_P$ generated by the
reflections in $P$. For $w W_P \in W/W_P$, $\ell(wW_P)$ denotes the
length of the (unique) representative of $wW_P$ in $W^P$. The Schubert
cells in $G/P$ are $X(wW_P)^\circ:= BwP/P \subseteq G/P$; then
$X(wW_P)^\circ \simeq \bbA^{\ell(wW_P)}$. The natural projection
$\pi: G/B \to G/P$ sends $X(w)^\circ$ to $X(wW_P)^\circ$ and it is an
isomorphism if $w \in W^P$. From this and the functoriality of motivic
Chern classes it follows that $\forall w\in W^P$,
\[ 
\pi_* \MC_y[X(w)^\circ \hookrightarrow G/B] = \MC_y[X(wW_P)^\circ
 \hookrightarrow G/P] \/.
\] 

\begin{remark} 
In fact, one can prove more: from~\cite[\S2]{BCMP:qkfin} one obtains
that the restriction $\pi|_{X(w)^\circ}: X(w)^\circ \to X(wW_P)^\circ$
is an equivariantly trivial fibration with fiber a Schubert cell of
dimension $\ell(w) - \ell(wW_P)$ in $\pi^{-1}(e_{wW_P}) \simeq P/B$,
regarded as a homogeneous space for the Levi subgroup of $P$. It is
not difficult to show that this implies that for all $w \in W$,
\[ 
\pi_* \MC_y[X(w)^\circ \hookrightarrow G/B] = (-y)^{\ell(w) - \ell(w
  W_P)} \MC_y[X(wW_P)^\circ \hookrightarrow G/P] \/.
\] 
Details of the proof and applications to point counting in
characteristic $p$ will be included in a continuation to this paper.
\qede\end{remark}

\section{The Hecke duality for motivic Chern classes} 
It was proved in~\cite[\S5]{AMSS:shadows} that the Poincar{\'e} duals
of the CSM classes of Schubert cells are given by the operators which
are adjoint to the Hecke-type operators which determine the CSM
classes. The same phenomenon holds in the context of this paper, with
the same idea of proof. However, in this context the DL operators
satisfy the quadratic relations~\eqref{eq:quad}, while in the
cohomological case studied in~\cite{AMSS:shadows} the corresponding
operators are self-inverse.  This leads to somewhat more involved
calculations for motivic Chern classes. In analogy with the dual CSM
class from \cite[Definition 5.3]{AMSS:shadows} we make the following
definition.

\begin{defin}\label{def:firstdual} 
Let $w \in W$. The {\em dual motivic Chern class} is defined by 
\[
\MC_y^\vee(Y(w)^\circ):=(\opT^\vee_{w_0w})^{-1}(\MC_y(Y(w_0)))
=(\opT^\vee_{w_0w})^{-1}(\calO^{w_0})\in K_T(G/B)[y,y^{-1}] \/.
\]
\end{defin}
The name of this class is explained by the following theorem, which is
the $\Kt$-theoretic analogue of~\cite[Theorem 5.7]{AMSS:shadows}.

\begin{theorem}\label{thm:dualbasis}
For every $u, v\in W$, 
\[ 
\langle \MC_y(X(u)^\circ), \MC_y^\vee(Y(v)^\circ)\rangle =
\delta_{u,v}(-y)^{\ell(u)-\dim G/B}\prod_{\alpha>0}(1+ye^{-\alpha})
\/.
\]
\end{theorem}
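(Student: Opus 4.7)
The plan is to transport all Hecke operators to one side of the pairing via adjointness, reducing the computation to a single localization calculation at the fixed point $e_{w_0}$.

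By the Remark following Corollary~\ref{cor:lesswsi}, iterated application of Theorem~\ref{thm:DLrecursion} gives $MC_y(X(u)^\circ) = \calT_{u^{-1}}(\calO_{\id})$, while by Definition~\ref{def:firstdual}, $MC_y^\vee(Y(v)^\circ) = (\calT^\vee_{w_0v})^{-1}(\calO^{w_0})$. Applying Lemma~\ref{lemma:adjoint} factor by factor to a reduced expression shows that $\calT_{u^{-1}}$ and $\calT^\vee_u$ are adjoint with respect to $\langle\cdot,\cdot\rangle$, so
\[
\langle MC_y(X(u)^\circ), MC_y^\vee(Y(v)^\circ)\rangle
= \langle \calO_{\id},\ \calT^\vee_u (\calT^\vee_{w_0v})^{-1}(\calO^{w_0})\rangle.
\]
I would then apply the dual version of Proposition~\ref{prop:Heckemult} to expand
\[
\calT^\vee_u (\calT^\vee_{w_0v})^{-1} = c_{uv^{-1}w_0}(y)\,\calT^\vee_{uv^{-1}w_0} + \sum_{w<uv^{-1}w_0} c_w(y)\,\calT^\vee_w,
\]
reducing everything to the coefficients $\langle \calO_{\id}, \calT^\vee_w(\calO^{w_0})\rangle$.

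The key lemma to prove is
\[
\langle \calO_{\id},\ \calT^\vee_w(\calO^{w_0})\rangle = \delta_{w,w_0}\prod_{\alpha>0}(1+ye^{-\alpha}).
\]
Using adjointness once more (this time $(\calT^\vee_w)^* = \calT_{w^{-1}}$), the left side equals $\langle MC_y(X(w^{-1})^\circ),\ \calO^{w_0}\rangle$. Because $Y(w_0) = \{e_{w_0}\}$, we have $\calO^{w_0} = \iota_{w_0}$, so this pairing reduces to the localization $MC_y(X(w^{-1})^\circ)|_{w_0}$. The support of $MC_y(X(w^{-1})^\circ)$ lies in $X(w^{-1})$, and $e_{w_0}$ belongs to $X(w^{-1})$ only when $w^{-1}=w_0$, giving vanishing unless $w=w_0$. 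For $w=w_0$, additivity of $MC_y$ gives $MC_y(X(w_0)^\circ) = MC_y(G/B) - MC_y(\partial X(w_0))$; the second summand vanishes at $e_{w_0}$ since $e_{w_0}$ lies in the open cell, and the first restricts to $\lambda_y(T^*_{e_{w_0}}G/B) = \prod_{\alpha>0}(1+ye^{-\alpha})$.

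Combining these ingredients, only the leading term $\calT^\vee_{w_0}$ can contribute, which forces $uv^{-1}w_0 = w_0$, i.e.\ $u=v$ (otherwise $uv^{-1}w_0<w_0$ and all $w$ in the support of the expansion are $<w_0$). When $u=v$, the length-additivity hypothesis $\ell(u)+\ell(w_0u)=\ell(w_0)$ holds automatically, so the explicit leading-coefficient formula in Proposition~\ref{prop:Heckemult} yields $c_{w_0}(y) = (-y)^{-\ell(w_0u)} = (-y)^{\ell(u)-\dim G/B}$, and the desired identity follows. The principal technical step is the leading-coefficient formula from Proposition~\ref{prop:Heckemult}; the conceptually delicate point is checking that $MC_y(\partial X(w_0))|_{w_0}=0$, which rests on the support observation that $e_{w_0}\in X(w_0)^\circ$.
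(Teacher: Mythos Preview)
Your argument is correct and follows the same overall strategy as the paper: move the operators across the pairing by adjointness, expand $\calT^\vee_u(\calT^\vee_{w_0v})^{-1}$ via Proposition~\ref{prop:Heckemult}, and reduce to the vanishing of $\langle \calO_{\id},\calT^\vee_w(\calO^{w_0})\rangle$ for $w\neq w_0$. The only genuine difference is in how you establish this last vanishing: the paper uses the triangularity of $\calT^\vee_w$ on the opposite Schubert basis (equation~\eqref{E:Tiveeact}) together with $\langle \calO_{\id},\calO^v\rangle=\delta_{v,\id}$, whereas you pass the operator back across and localize at $e_{w_0}$. Your route is arguably more direct, since it replaces an induction on the Schubert expansion by a one-line support argument.

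One small slip: since $MC_y(X(v)^\circ)=\calT_{v^{-1}}(\calO_{\id})$, the class $\calT_{w^{-1}}(\calO_{\id})$ is $MC_y(X(w)^\circ)$, not $MC_y(X(w^{-1})^\circ)$. This is harmless---$e_{w_0}\in X(w)$ iff $w=w_0$, the same condition you reach---but you should correct the indexing.
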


\begin{remark}  Another interpretation of the dual class, in terms of 
Serre duality, is given in Theorem \ref{thm:dual1} below.
In fact, we could have alternatively {\em defined} the dual class by this theorem, then 
proved that it is given by the operator from Definition~\ref{def:firstdual}. See also 
\cite[Theorem~4.2]{MSA:whittaker} for 
more about the relation between Serre duality and the Hecke involution on 
Demazure-Lusztig operators.

Also note that, geometrically,
the quantity $\prod_{\alpha>0}(1+ye^{-\alpha})$ equals
$\lambda_y(T^*_{w_0}(G/B))$, i.e., it is the $\lambda_y$ class of the
fiber of the cotangent bundle at $w_0$.  
\qede\end{remark}

\begin{proof}[Proof of Theorem~\ref{thm:dualbasis}] 
Using the definition of both flavors of motivic classes, and the fact
that $\opT_i$ and $\opT^\vee_i$ are adjoint to each other, we obtain
that
\[  
\langle \MC_y(X(u)^\circ), \MC_y^\vee(Y(v)^\circ)\rangle = \langle
\opT_{u^{-1}} (\calO_{\id}), (\opT^\vee_{w_0v})^{-1}(\calO^{w_0})
\rangle = \langle \calO_{\id}, \opT^\vee_u \cdot
(\opT^\vee_{w_0v})^{-1}(\calO^{w_0}) \rangle \/.
\] 
By Proposition~\ref{prop:Heckemult}, 
\[
\opT^\vee_u \cdot (\opT^\vee_{w_0v})^{-1} = c_{uv^{-1} w_0}(y)
\opT^\vee_{uv^{-1} w_0} + \sum_{w < uv^{-1} w_0} c_{w}(y)
\opT^\vee_w \/.
\] 
Since $\opT^\vee_w (\calO^{w_0})$ is a combination of Schubert 
classes $\calO^{w'}$ such that $w\leq w'$ and $\langle\calO_{\id},
\calO^w\rangle=\delta_{w, \id}$, $\langle \calO_{\id}, \opT^\vee_u
\cdot (\opT^\vee_{w_0v})^{-1}(\calO^{w_0}) \rangle$ is $0$ unless
$uv^{-1} w_0 = w_0$, i.e., $u=v$. In this case, by~\eqref{E:Tiveeact}, 
the coefficient of $\calO^{\id}$ in
$\opT^\vee_{w_0}(\calO^{w_0})$ is 
$\prod_{\alpha >0} (1+ y e^{-\alpha})$. 
By Proposition~\ref{prop:Heckemult}, the coefficient of
$\opT^\vee_{w_0}$ in $\opT^\vee_u \cdot (\opT^\vee_{w_0u})^{-1}$ is
$(-y)^{-\ell(w_0u)}$. Therefore,
\begin{align*}
\langle \MC_y(X(u)^\circ), \MC_y^\vee(Y(u)^\circ)\rangle&=\langle
\calO_{\id}, \opT^\vee_u \cdot (\opT^\vee_{w_0u})^{-1}(\calO^{w_0})
\rangle\\ 
&=\langle \calO_{\id},(-y)^{-\ell(w_0u)}T^\vee_{w_0}(\calO^{w_0})
\rangle\\ 
&=\langle \calO_{\id},(-y)^{-\ell(w_0u)}\prod_{\alpha >0} (1+ y
e^{-\alpha})\calO^{\id} \rangle\\ 
&=(-y)^{\ell(u)-\dim G/B}\prod_{\alpha >0} (1+ y e^{-\alpha})\/,
\end{align*} 
concluding the proof.
\end{proof}

\begin{remark} 
It is natural to consider the normalized class 
\begin{equation}\label{eq:normMC}
\widetilde{\MC}_y(Y(w)^\circ):= (-y)^{\dim G/B -
  \ell(w)}\MC_y^\vee(Y(w)^\circ) \/.
\end{equation} 
The classes $\widetilde{\MC}_y(Y(w)^\circ)$ are given by the normalized
operator $\calL_i := \opT^\vee_i + (1+y) \id$; cf.~equation
\eqref{E:invTi}. The coefficients in the Schubert expansion of this class
are polynomial in $y$.  
\qede\end{remark}

\begin{example}\label{ex:motcells} 
The motivic Chern class for Schubert cells in $\Fl(3)$ were listed in
the introduction.  The normalized dual motivic classes
$\widetilde{\MC}_y(Y(w)^\circ)$ for the Schubert cells in $\Fl(3)$,
computed using~\eqref{eq:normMC} and Definition~\ref{def:firstdual}, are:
{\small
\[ 
\begin{split} 
\widetilde{\MC}_y(Y(w_0)) &= \calO^{w_0}; \\ 
\widetilde{\MC}_y(Y(s_1 s_2)^\circ) &= (1+y) \calO^{s_1s_2} + y
\calO^{w_0}\/; \\ 
\widetilde{\MC}_y(Y(s_2 s_1)^\circ) &= (1+y) \calO^{s_2 s_1} + y
\calO^{w_0} \/; \\ 
\widetilde{\MC}_y(Y(s_1)^\circ) &= (1+y)^2 \calO^{s_1} + y(1+y)
\calO^{s_1 s_2} + 2y (1+y)\calO^{s_2 s_1} + y^2 \calO^{w_0} \/;\\ 
\widetilde{\MC}_y(Y(s_2)^\circ) &= (1+y)^2 \calO^{s_2} + 2y(1+y)
\calO^{s_1 s_2} + y (1+y)\calO^{s_2 s_1} + y^2 \calO^{w_0} \/;\\ 
\widetilde{\MC}_y(Y(\id)^\circ) &= (1+y)^3 \calO^{\id} + y (1+y)^2
(\calO^{s_1} + \calO^{s_2}) +2 y^2 (1+y)( \calO^{s_1 s_2} 
+ \calO^{s_2 s_1}) + y^3 \calO^{w_0}
\/.
\end{split} 
\]}
An algebra verification, using the fact that $\langle \calO_u,
\calO^v \rangle = 1$ if $u \ge v$ and $\langle \calO_u, \calO^v
\rangle = 0$ otherwise (cf.~\eqref{eq:Opair}), shows that
\[ 
\langle \MC_y(X(u)^\circ), \widetilde{\MC}_y(Y(v)^\circ) \rangle =
(1+y)^{\dim \Fl(3)} \delta_{u,v} \/,
\] 
as prescribed by Theorem~\ref{thm:dualbasis}. 
(Here we are setting
the equivariant variables $e^{\alpha}$ to $1$.)
 At this time we note
that the analogue of the positivity Conjecture~\ref{conj:csmpos} is
false for the dual classes. For instance the coefficient of
$\calO^{s_3 s_1 s_2}$ in the expansion of
$\widetilde{\MC}_y(Y(\id)^\circ) \in K(\Fl(4))$ equals $y^2
(4y-1)(1+y)^3$.  
\qede\end{example}

In the next result we determine the action of the operators
$\opT^\vee_i$ on the dual motivic classes.

\begin{prop}\label{prop:Tidualaction} 
Let $w \in W$ be a Weyl group element and $s_i$ a simple
reflection. Then the following equalities hold:
\begin{equation}\tag{a}
\opT^\vee_i (\MC_y^\vee(Y(w)^\circ)) = 
\begin{cases} 
\MC_y^\vee(Y(ws_i)^\circ) & \textrm{ if } ws_i > w \\ 
-(y+1) \MC_y^\vee(Y(w)^\circ) - y \MC_y^\vee(Y(ws_i)^\circ) 
& \textrm{ if } ws_i<w  
\end{cases}
\end{equation}
\begin{equation}
\tag{b}
(\opT^\vee_i)^{-1} (\MC_y^\vee(Y(ws_i)^\circ)) = 
\begin{cases} 
\MC_y^\vee(Y(w)^\circ) & \textrm{ if } ws_i > w \\ 
-\frac{1}{y} \MC_y^\vee(Y(w)^\circ) - \frac{y+1}{y} 
\MC_y^\vee(Y(ws_i)^\circ) & \textrm{ if } ws_i<w 
\end{cases} 
\end{equation}
\end{prop}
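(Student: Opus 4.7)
The plan is to exploit the defining identity $MC_y^\vee(Y(w)^\circ) = (\calT^\vee_{w_0w})^{-1}(\calO^{w_0})$ together with the braid relations and the quadratic relation for the $\calT_i^\vee$ established in Proposition~\ref{prop:hecke-relations}. The argument runs in close parallel to Corollary~\ref{cor:lesswsi}, with the same structural dichotomy coming from the quadratic relation.

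For part~(a), first branch ($ws_i > w$), the length identity $\ell(ws_i) = \ell(w)+1$ translates to $\ell(w_0 w s_i) = \ell(w_0 w) - 1$, and since $(w_0 w)s_i = w_0 w s_i$, the element $w_0 w$ admits a reduced decomposition ending in $s_i$. By~\eqref{eq:Tv} this yields a length-additive factorization $\calT^\vee_{w_0 w} = \calT^\vee_{w_0 w s_i}\,\calT^\vee_i$, so $(\calT^\vee_{w_0w})^{-1} = (\calT^\vee_i)^{-1}(\calT^\vee_{w_0 w s_i})^{-1}$. Applying $\calT^\vee_i$ cancels the leftmost factor and gives exactly $(\calT^\vee_{w_0 w s_i})^{-1}(\calO^{w_0}) = MC_y^\vee(Y(ws_i)^\circ)$.

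For part~(a), second branch ($ws_i < w$), I would reduce to the first branch: since $(ws_i)s_i = w > ws_i$, the first branch applied to $ws_i$ gives $\calT^\vee_i\, MC_y^\vee(Y(ws_i)^\circ) = MC_y^\vee(Y(w)^\circ)$, i.e. $MC_y^\vee(Y(w)^\circ) = \calT^\vee_i\,MC_y^\vee(Y(ws_i)^\circ)$. Then
\[
\calT^\vee_i\,MC_y^\vee(Y(w)^\circ) = (\calT^\vee_i)^2\,MC_y^\vee(Y(ws_i)^\circ),
\]
and the quadratic relation $(\calT^\vee_i)^2 = -(y+1)\calT^\vee_i - y\cdot\id$ from~\eqref{eq:quad} produces the claimed combination of $MC_y^\vee(Y(w)^\circ)$ and $MC_y^\vee(Y(ws_i)^\circ)$.

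Part~(b) is a formal consequence of part~(a). When $ws_i > w$, inverting the identity from the first branch of~(a) gives $(\calT^\vee_i)^{-1}\,MC_y^\vee(Y(ws_i)^\circ) = MC_y^\vee(Y(w)^\circ)$. When $ws_i < w$, I would use the formula $(\calT^\vee_i)^{-1} = -y^{-1}\calT^\vee_i - (1+y)y^{-1}\,\id$ from~\eqref{E:invTi} and then substitute $\calT^\vee_i\,MC_y^\vee(Y(ws_i)^\circ) = MC_y^\vee(Y(w)^\circ)$ (which, as noted above, is the first branch of~(a) applied to $ws_i$). There is no genuine obstacle here; the only care needed is bookkeeping of the Bruhat relations $ws_i > w$ vs.\ $(ws_i)s_i > ws_i$ so that the correct branch of part~(a) is invoked at each step.
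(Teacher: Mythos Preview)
Your proof is correct and follows essentially the same approach as the paper's own proof: the factorization $\calT^\vee_{w_0w} = \calT^\vee_{w_0ws_i}\calT^\vee_i$ when $ws_i>w$, the quadratic relation for the second branch of~(a) (exactly paralleling Corollary~\ref{cor:lesswsi}), and deducing~(b) from~(a) via~\eqref{E:invTi}. The only difference is that you spell out both branches of~(b) explicitly, whereas the paper simply remarks that~(b) follows by applying $(\calT_i^\vee)^{-1}$ to both sides of~(a).
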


\begin{proof} 
To prove part (a), consider first the case when $ws_i> w$. Then
$w_0 ws_i < w_0 w$, thus $\opT^\vee_{w_0 w} = \opT^\vee_{w_0w s_i}
\opT^\vee_i$. By Definition~\ref{def:firstdual},
\[ 
\opT^\vee_i (\MC_y^\vee(Y(w)^\circ)) =
\opT^\vee_i(\opT^\vee_{w_0w})^{-1}(\calO^{w_0}) =
(\opT^\vee_{w_0ws_i})^{-1}(\calO^{w_0}) = \MC_y^\vee(Y(ws_i)^\circ)\/.
\] 
The situation when $ws_i <w$ is treated as in the proof of
Corollary~\ref{cor:lesswsi}, using that $\opT^\vee_i$~satisfies the
quadratic relations from Proposition~\ref{prop:hecke-relations}. Part
(b) follows from (a) by applying $(\opT^\vee_i)^{-1}$ to both sides.
\end{proof}

\section{Three recursions for localizations of motivic Chern classes}
In this section, we use the Demazure Lusztig operators $\opT_i$ to
obtain recursive relations for the ordinary and dual motivic Chern
classes of Schubert cells. These recursions will be used to compare
the motivic Chern classes both with stable envelopes and with
Casselman's basis. We also record a divisibility property for
localizations of motivic classes, to be used later in the proof of
Theorem~\ref{thm:divintro}.

\subsection{Recursions} 
Consider the localized equivariant $\Kt$-theory ring defined by 
\[
K_T(G/B) \hookrightarrow \: K_T(G/B)_{\loc} := K_T(G/B)
\otimes_{K_T(\pt)} \Frac(K_T(\pt))\:.
\]
The Lefschetz fixed point formula in equivariant $\Kt$-theory
(see e.g.,~\cite[\S5.10]{chriss2009representation}) gives the expansion of
the motivic Chern classes in terms of the fixed point classes
$\iota_w$, for every $w \in W$:
\begin{align}\label{equ:locofmotivic}
\MC_y(X(w)^\circ)&=\sum_{u\leq w}
\MC_y(X(w)^\circ)|_u\frac{\iota_{u}}{\lambda_{-1}(T_u^*(G/B))}
\nonumber\\
&=\sum_{u\leq
  w}\MC_y(X(w)^\circ)|_u\frac{\iota_{u}}{\prod_{\alpha>0}
(1-e^{u\alpha})}\quad \in K_T(G/B)_{\loc}[y] \/.
\end{align}

The following three propositions give recursions formulas for various
flavors of motivic Chern classes. These will be used later to make the
connection with the Hecke algebra action on the principal series
representation. The similarity of the recursions can be explained by
the fact that they are related either by an automorphism of $G/B$ or
by the involution exchanging the Demazure-Lusztig operators.

\begin{prop}\label{prop:charmot}
The localizations $\MC_y(X(w)^\circ)|_u$ are uniquely determined by the
following conditions:
\begin{enumerate}
\item[(a)] 
$\MC_y(X(w)^\circ)|_u=0$, unless $u\leq w$.
\item[(b)]
If $u=w$:
\[
\MC_y(X(w)^\circ)|_w=\prod_{\alpha>0,w\alpha<0}(1+ye^{w\alpha})
\prod_{\alpha>0,w\alpha>0}(1-e^{w\alpha}).
\]
\item[(c)] If $ws_{i}>w$, then
\[
\MC_y(X(ws_{i})^\circ)|_u=-\frac{1+y}{1-e^{-u\alpha_i}}
\MC_y(X(w)^\circ)|_u+\frac{1+ye^{u\alpha_i}}{1-e^{-u\alpha_i}}
\MC_y(X(w)^\circ)|_{us_{i}}.
\]
\end{enumerate}
\end{prop}

\begin{proof}
Part (a) follows because the motivic class is supported on the
Schubert variety $X(w)$. To prove part (b), observe that
$\MC_y(X(w)^\circ)|_w=\MC_y(X(w))|_w$, by additivity and because
$\MC_y(X(v)^\circ)|_w = 0$ for $v < w$ by part (a). Then
\[
\MC_y(X(w)^\circ)|_w=\MC_y(X(w))|_w =
\lambda_y(T^*_wX(w))\lambda_{-1}(N_w^\vee),
\] 
where $T^*_wX(w)$ and $N_w^\vee$ are the fibers at the fixed point $e_w$
of the dual of the cotangent, respectively the conormal bundle for
$X(w)$. (A more general result is proved in Theorem~\ref{thm:MCsmooth}
below.) Part~(c) follows by applying the operator $\opT_i$ to
Equation~\eqref{equ:locofmotivic} and taking the coefficients of
$\iota_{u}$; this requires the action of $\opT_i$ on the fixed point
basis described in Lemma~\ref{lem:actiononfixedpoint}. Finally, the
uniqueness follows by induction on the length of $w$.
\end{proof}

For later use, we also record the similar result for the motivic
Chern class of the opposite Schubert cells.
\begin{prop}\label{prop:charmotoppo}
The localizations $\MC_y(Y(w)^\circ)|_u$ are uniquely determined by the
following conditions:
\begin{enumerate}
\item[(a)]
$\MC_y(Y(w)^\circ)|_u=0$, unless $u\geq w$.
\item[(b)]
If $u=w$:
\[
\MC_y(Y(w)^\circ)|_w=\prod_{\alpha>0,w(\alpha)
  >0}(1+ye^{w\alpha})\prod_{\alpha>0,w(\alpha) < 0}(1-e^{w\alpha}).
\]
\item[(c)]
If $ws_{i}>w$, then
\[
\MC_y(Y(w)^\circ)|_u=-\frac{1+y}{1-e^{-u\alpha_i}}
\MC_y(Y(ws_{i})^\circ)|_u+\frac{1+ye^{u\alpha_i}}
{1-e^{-u\alpha_i}}\MC_y(Y(ws_{i})^\circ)|_{us_{i}}.
\]
\end{enumerate}
\end{prop}

\begin{proof} 
The left multiplication by $w_0$ induces an automorphism of $G/B$
sending $X(w)$ to $Y(w_0 w)$. This is not $T$-equivariant, but it is
equivariant with respect to the map $T \to T$ defined by $t \mapsto w_0 t
w_0$. This induces an automorphism of $K_T(G/B)$ and its localized
version, twisting the coefficients by $w_0$. Then the proposition
follows from Proposition~\ref{prop:charmot} above, by applying $w_0$.
\end{proof}
Similar formulas hold for the dual classes
  $\MC_y^\vee(Y(w)^\circ)$:
\begin{prop}\label{prop:chardualmot}
The localizations $\MC_y^\vee(Y(w)^\circ)|_u$ are uniquely determined
by the following conditions:
\begin{enumerate}
\item[(a)]
$\MC_y^\vee(Y(w)^\circ)|_u=0$, unless $u\geq w$.
\item[(b)]
If $u=w$: 
\[
\MC_y^\vee(Y(w)^\circ)|_w=(-1)^{\dim G/B-\ell(w)}
\prod_{\alpha>0,w\alpha>0}(y^{-1}+e^{-w\alpha})
\prod_{\alpha>0,w\alpha<0}(1-e^{w\alpha}) \/.
\]
\item[(c)]
If $ws_{i}>w$, then
\[
\MC^\vee_y(Y(w)^\circ)|_u=\frac{1+y^{-1}}{e^{u\alpha_i}-1}
\MC^\vee_y(Y(ws_{i})^\circ)|_u+\frac{y^{-1}+
e^{-u\alpha_i}}{e^{-u\alpha_i}-1}
\MC^\vee_y(Y(ws_{\alpha_i})^\circ)|_{us_{\alpha_i}}.
\]
\end{enumerate}
\end{prop}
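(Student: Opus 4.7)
The plan is to mirror the proofs of Propositions \ref{prop:charmot} and \ref{prop:charmotoppo}, with Proposition \ref{prop:Tidualaction}(b) playing the role of the operator recursion: when $ws_i>w$ one has $MC_y^\vee(Y(w)^\circ) = (\calT_i^\vee)^{-1}(MC_y^\vee(Y(ws_i)^\circ))$, and the base case for every induction is $w=w_0$, where $MC_y^\vee(Y(w_0)) = \calO^{w_0} = [\calO_{e_{w_0}}]$ is the structure sheaf of a single fixed point. I would prove the three parts in the order (c), (a), (b), and obtain uniqueness by downward induction on $\ell(w)$ using (b) as base case and (c) as inductive step.

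For (c), expand $MC_y^\vee(Y(ws_i)^\circ) = \sum_v \frac{MC_y^\vee(Y(ws_i)^\circ)|_v}{\prod_{\beta>0}(1-e^{v\beta})}\iota_v$ in the fixed point basis, apply $(\calT_i^\vee)^{-1}$ termwise using Lemma \ref{lem:actiononfixedpoint}(e), and extract the coefficient of $\iota_u$ on both sides. Multiplying through by the Euler class $\prod_{\beta>0}(1-e^{u\beta})$ to convert fixed-point coefficients back to localizations, and using the identity $\prod_{\beta>0}(1-e^{us_i\beta})/\prod_{\beta>0}(1-e^{u\beta}) = (1-e^{-u\alpha_i})/(1-e^{u\alpha_i})$ (which holds because $s_i$ permutes the positive roots while swapping $\alpha_i\leftrightarrow -\alpha_i$) reproduces exactly the formula stated in (c).

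For (a), I would induct downward on $\ell(w)$: the base $w=w_0$ is immediate, and for the step, choose $s_i$ with $ws_i>w$; since $(\calT_i^\vee)^{-1}(\iota_v) \in \mathrm{span}(\iota_v, \iota_{vs_i})$, the support of $MC_y^\vee(Y(w)^\circ)$ is contained in $\{u : u\geq ws_i\}\cup\{u : us_i\geq ws_i\}$, and both conditions imply $u\geq w$ (the first is trivial; the second follows from the lifting property of the Bruhat order applied to $(us_i, w)$, treated in the cases $us_i > u$ and $us_i < u$ separately). For (b), pair $MC_y(X(w)^\circ)$ with $MC_y^\vee(Y(w)^\circ)$ using Theorem \ref{thm:dualbasis} and expand via $\Kt$-theoretic localization; by (a) combined with Proposition \ref{prop:charmot}(a), only $u=w$ contributes, and solving for $MC_y^\vee(Y(w)^\circ)|_w$ using the explicit value of $MC_y(X(w)^\circ)|_w$ from Proposition \ref{prop:charmot}(b) produces the claimed formula after splitting $\prod_{\alpha>0}(1+ye^{-\alpha})$ according to the sign of $w^{-1}(\alpha)$ and rewriting $(1+ye^{-w\gamma}) = y(y^{-1}+e^{-w\gamma})$ for the factors with $w\gamma>0$. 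The main obstacle is the careful algebra of products over positive roots in (c) and (b); the geometric content is routine once the operator recursion from Proposition \ref{prop:Tidualaction} is in hand.
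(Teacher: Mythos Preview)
Your proposal is correct and follows essentially the same approach as the paper: part~(c) via the action of $(\calT_i^\vee)^{-1}$ on the fixed point basis (Lemma~\ref{lem:actiononfixedpoint}(e)) combined with Proposition~\ref{prop:Tidualaction}, and part~(b) via the Hecke duality of Theorem~\ref{thm:dualbasis} together with Bott localization and Proposition~\ref{prop:charmot}(b). The only minor difference is in the support argument~(a): the paper observes directly that $(\calT_i^\vee)^{-1}$ sends a Schubert class $\calO^u$ into classes supported on $Y(u)\cup Y(us_i)$ and then invokes Proposition~\ref{prop:Tidualaction}, whereas you run the same induction through the fixed point basis and appeal to the Bruhat lifting property; both arguments are standard and yield the same conclusion.
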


\begin{proof} 
These formulae are regarded in $K_T(\pt)[y^{-1}] \hookrightarrow \:
\Frac(K_T(\pt))[y^{-1}]$.  The uniqueness follows directly from
induction. So we only need to show that $\MC_y^\vee(Y(w)^\circ)|_u$
satisfies these properties. The support property follows because
$(\opT^\vee_i)^{-1}$ sends a Schubert class $\calO^u$ into classes
supported on $Y(u) \cup Y(u s_i)$; then one applies
Proposition~\ref{prop:Tidualaction}. To calculate the localization at
$w$, we use the duality from Theorem~\ref{thm:dualbasis} and 
the Lefschetz fixed point formula to obtain
\[ 
\begin{split} 
(-y)^{\ell(w) - \dim G/B} \prod_{\alpha > 0} (1+ ye^{-\alpha}) & =
  \langle \MC_y(X(w)^\circ), \MC_y^\vee(Y(w)^\circ) \rangle \\ 
& = \sum_{u \in W} \frac{(\MC_y(X(w)^\circ) \cdot
    \MC_y^\vee(Y(w)^\circ))|_u}{\prod_{\alpha>0} (1- e^{u(\alpha)})}
  \cdot \int_{G/B} \iota_u ~\/.
\end{split} 
\] 
The only non-zero contribution is for $u=w$, and the integral equals
$1$, thus
\[ 
\MC_y^\vee(Y(w)^\circ)|_w = \frac{(-y)^{\ell(w) - \dim G/B}
  \prod_{\alpha > 0} 
(1+ye^{-\alpha})(1-e^{w(\alpha)})}{\MC_y(X(w)^\circ)|_w} \/.
\] 
Part~(b) follows from this and the localization from
Proposition~\ref{prop:charmot}. Part~(c) follows as in
Proposition~\ref{prop:charmot}, using now
Proposition~\ref{prop:Tidualaction} and part~(e) of
Lemma~\ref{lem:actiononfixedpoint}.
\end{proof}

\subsection{A divisibility property for localization coefficients} 
We record the following property which will be used in our
applications to $p$-adic groups.

\begin{theorem}\label{thm:mcdiv}
For every $w\leq u\in W$, the polynomial $\MC_y(Y(w)^\circ)|_u \in
K_T(\pt)[y]$ is divisible~by
\[ 
\prod_{\alpha>0, u\alpha>0}(1+ye^{u\alpha})\prod_{\alpha>0,w\nleq
  us_\alpha<u}(1-e^{u\alpha}) \/.
\]
\end{theorem}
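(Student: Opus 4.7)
My plan is to prove the two divisibilities separately: that $A_u := \prod_{\alpha > 0, u\alpha > 0}(1+ye^{u\alpha})$ divides $MC_y(Y(w)^\circ)|_u$, and that $B_{w,u} := \prod_{\alpha > 0, w \nleq us_\alpha < u}(1-e^{u\alpha})$ divides it. These two polynomials are coprime in $K_T(pt)[y]$: every factor of $A_u$ is linear in $y$ with constant term $1$, while $B_{w,u}$ is $y$-free, and linear factors coming from distinct roots are non-associate. Hence the product $A_u \cdot B_{w,u}$ divides $MC_y(Y(w)^\circ)|_u$ as soon as each factor does.

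For divisibility by $B_{w,u}$, I would invoke the K-theoretic GKM (Chang--Skjelbred) condition on $K_T(G/B)$: for each root $\alpha$, the $T$-invariant $\Pbb^1$ connecting $e_u$ and $e_{us_\alpha}$ has tangent character $u\alpha$ at $e_u$, so every $\xi \in K_T(G/B)$ satisfies $\xi|_u \equiv \xi|_{us_\alpha} \pmod{1 - e^{u\alpha}}$. Apply this to $\xi = MC_y(Y(w)^\circ)$: when $w \nleq us_\alpha$ the fixed point $e_{us_\alpha}$ is not in $Y(w)$, so $\xi|_{us_\alpha} = 0$ by the support property and hence $(1 - e^{u\alpha}) \mid \xi|_u$. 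Note that the constraint $us_\alpha < u$ in the product is automatic from $w \leq u$ and $w \nleq us_\alpha$: otherwise $us_\alpha \geq u \geq w$, contradicting $w \nleq us_\alpha$. Taking the product of these divisibilities over the relevant $\alpha$ (with distinct, non-associate factors) yields $B_{w,u} \mid MC_y(Y(w)^\circ)|_u$.

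For divisibility by $A_u$, I would argue by downward induction on $\ell(w)$, with base $w = w_0$ forcing $u = w_0$ and $A_{w_0} = 1$. For the inductive step, pick a simple reflection $s_i$ with $ws_i > w$ and apply the recursion (c) of Proposition~\ref{prop:charmotoppo}:
\[
(1 - e^{-u\alpha_i})\, MC_y(Y(w)^\circ)|_u = -(1+y)\, MC_y(Y(ws_i)^\circ)|_u + (1+ye^{u\alpha_i})\, MC_y(Y(ws_i)^\circ)|_{us_i}.
\]
Since $s_i$ permutes $R^+ \setminus \{\alpha_i\}$, a weight-by-weight comparison of the defining products yields $A_u = A_{us_i}(1+ye^{u\alpha_i})$ when $us_i > u$, and $A_{us_i} = A_u(1+ye^{-u\alpha_i})$ (so $A_u \mid A_{us_i}$) when $us_i < u$. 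The inductive hypothesis, applicable because $\ell(ws_i) > \ell(w)$, together with the support vanishing $MC_y(Y(ws_i)^\circ)|_v = 0$ for $v \not\geq ws_i$, gives $A_u \mid MC_y(Y(ws_i)^\circ)|_u$ and $A_{us_i} \mid MC_y(Y(ws_i)^\circ)|_{us_i}$. In either subcase one checks directly that both terms on the right-hand side are divisible by $A_u$; since $A_u$ is coprime to $(1-e^{-u\alpha_i})$ in $K_T(pt)[y]$ and the left-hand side is already known to be polynomial, the divisibility $A_u \mid MC_y(Y(w)^\circ)|_u$ follows. The main delicate point is that the two subcases $us_i > u$ and $us_i < u$ relate $A_u$ and $A_{us_i}$ in opposite directions, so the induction must be organized on $\ell(w)$ (rather than on $\ell(u) - \ell(w)$) in order for the inductive hypothesis to cover the pair $(ws_i, us_i)$ in either subcase.
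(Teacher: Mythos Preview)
Your proof is correct. Both divisibilities are established soundly, and the coprimality argument in the UFD $K_T(pt)[y]$ is valid.

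For the factors $(1-e^{u\alpha})$, your GKM argument and the paper's argument are essentially the same: the paper restricts $MC_y(Y(w)^\circ)$ to the $T$-stable curve $C\simeq\Pbb^1$ through $e_u$ and $e_{us_\alpha}$, writes the equivariant Euler characteristic by localization as
\[
\chi_T(C, MC_y(Y(w)^\circ))=\frac{MC_y(Y(w)^\circ)|_u}{1-e^{u\alpha}}+\frac{MC_y(Y(w)^\circ)|_{us_\alpha}}{1-e^{-u\alpha}},
\]
observes that the second summand vanishes since $e_{us_\alpha}\notin Y(w)$, and concludes from polynomiality of the Euler characteristic. This is precisely the derivation of the GKM condition you invoke.

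For the factors $(1+ye^{u\alpha})$, your approach differs genuinely from the paper's. The paper simply cites \cite[Theorem~5.3(ii)]{feher2018motivic}, which asserts in general that the localization of a motivic Chern class at a fixed point is divisible by the $\lambda_y$-class of the cotangent space to the corresponding cell. Your downward induction on $\ell(w)$ via the recursion of Proposition~\ref{prop:charmotoppo} is a self-contained alternative that stays entirely within the paper: the relation $A_u=(1+ye^{u\alpha_i})A_{us_i}$ when $us_i>u$ (and $A_u\mid A_{us_i}$ when $us_i<u$), together with the inductive hypothesis applied to $ws_i$ at both $u$ and $us_i$, and coprimality of $A_u$ with $(1-e^{-u\alpha_i})$, handles both subcases cleanly. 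Your proof thus trades an external black box for an explicit induction; the paper's citation buys brevity and a statement valid for more general orbit closures, while your argument buys self-containment and makes transparent that only the Demazure--Lusztig recursion is needed.
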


\begin{proof}
By a general property of motivic classes proved
in~\cite[Theorem~5.3(ii)]{feher2018motivic}, the localization
coefficient $\MC_{y}(Y(w)^\circ)|_u$ is divisible by
$\lambda_y(T_u^*Y(u)^\circ)=\prod_{\alpha>0,u\alpha>0}
(1+ye^{u\alpha})$. As $\alpha$ varies in the set of positive roots,
the factors $1-e^{u\alpha}$ and $1+ye^{u \alpha}$ are relative prime
to each other. Then it remains to show that for every $\alpha>0$ such
that $w\nleq us_\alpha<u$, the localization coefficient
$\MC_y(Y(w)^\circ)|_u$ is divisible by $1-e^{u\alpha}$. Let $C \simeq
\bbP^1$ denote the $T$-stable curve connecting the fixed points
$e_u$ and $e_{us_\alpha}$. The $T$-weight of the tangent space $T_{u}
C$ is $-u\alpha$. By the K-theoretic analogue of the GKM conditions,
applied to $\MC_y(Y(w)^\circ)$ (see e.g., 
\cite[Corollary~5.12]{vezzosi-vistoli:higher}),
the difference
\[ 
\MC_y(Y(w)^\circ)|_u - \MC_y(Y(w)^\circ)|_{us_\alpha} 
\]
is divisible by $1-e^{u \alpha}$. Then the claim follows because
$\MC_y(Y(w)^\circ)|_{us_\alpha}
=0$, as the hypothesis $w \nleq us_\alpha$ implies that 
$e_{u s_\alpha} \notin Y(w)$.\begin{footnote}{We thank A. Okounkov for 
comments leading to this proof.}\end{footnote} 
\end{proof}

\section{Motivic Chern classes and $\Kt$-theoretic stable envelopes}\label{s:MCstab} 
In this section, we recall some basic properties of the
$\Kt$-theoretic stable basis of $T^*(G/B)$, including a recursive
relation. Our main references are~\cite{su2017k,
  okounkov2015lectures,okounkov2016quantum}. We compare the recursive
relation obtained in~\cite{su2017k} to the one for motivic Chern
classes, and we deduce that the two objects are closely related. This
was also found by F{\'e}her and Rim{\'a}nyi and Weber
in~\cite{feher2018motivic} (see also~\cite{FRW:char}), using
interpolation techniques for motivic Chern classes. In cohomology, the
relation between stable envelopes and Chern-Schwartz-MacPherson
classes was noticed in~\cite{rimanyi.varchenko:csm,AMSS:shadows} and
it was used in~\cite{AMSS:shadows} to obtain a second `stable basis
duality'. In theorem~\ref{thm:dual1} we generalize this duality to
$\Kt$-theory.

\subsection{$\Kt$-theoretic stable envelopes} 
The cotangent bundle of $G/B$ is the homogeneous bundle $T^*(G/B) := G
\times^B T^*_{1.B}(G/B)$, given by equivalence classes
\[ 
\{ [g,v]: (g,v) \in G \times T^*_{1.B}(G/B) \text{ and } (gb, v)
\sim (g, b.v), \forall g \in G, b \in B \} \/;
\] 
here $T^*_{1.B}(G/B)$ is the cotangent space at the identity with its
natural $B$-module structure. As before, let $T$ be the maximal torus
in $B$, and consider the $\bbC^*$-module $\bbC$ with character~$q^{1/2}$. 
We let $\bbC^*$ act trivially on $G/B$ and we consider the $T
\times \bbC^*$ action on the cotangent bundle defined by
$(t,z).[g,v]=[tg,z^{-2}v]$. In other words, $T$ acts via its natural
left action; $\bbC^*$ acts such that the cotangent fibers get a weight
$q^{-1}$ and $K_{T \times \bbC^*}(\pt) = K_T(\pt)[q^{\pm 1/2}]$.

The stable basis is a certain basis for the localized equivariant
$\Kt$-theory 
\[
K_{T\times\bbC^*}(T^*(G/B))_{\loc}
:=K_{T\times\bbC^*}(T^*(G/B))\otimes_{K_{T\times\bbC^*}(\pt)}
\Frac(K_{T\times\bbC^*}(\pt))\/,
\] 
where $\Frac$ means taking the fraction field. The basis elements are
called the {\em stable envelopes\/} $\{\stab_{\calC, T^{\frac{1}{2}},
  \calL}(w)\,|\,w\in W\}$ and were defined by Maulik and Okounkov in the
cohomological case. We recall their definition in $\Kt$-theory below,
following mainly Okounkov's lectures~\cite{okounkov2015lectures}
and~\cite{su2017k}.

For a fixed Weyl group element, the definition of the stable envelope
$\stab_{\calC, T^{\frac{1}{2}}, \calL}(w)$ depends on three
parameters:
\begin{itemize}
\item 
a chamber $\calC$ in the Lie algebra of the maximal torus $T$, or
equivalently, a Borel subgroup of $G$.
\item 
a polarization $T^{\frac{1}{2}}\in K_{T\times\bbC^*}(T^*(G/B))$ of the
tangent bundle $T(T^*(G/B))$, i.e., 
a solution of the equation 
\[
T^{\frac{1}{2}}+q^{-1}(T^{\frac{1}{2}})^\vee=T(T^*(G/B)) 
\] 
in the ring $K_{T\times\bbC^*}(T^*(G/B))_{\loc}$.
The only polarizations utilized in this paper 
are $T(G/B)$ and $T^*(G/B)$. For every polarization
$T^{\frac{1}{2}}$, there is an opposite polarization defined as
$T^{\frac{1}{2}}_{\opp}=q^{-1}(T^{\frac{1}{2}})^\vee$.

\item 
A sufficiently general fractional equivariant line bundle on $G/B$,
i.e.~a general element $\calL \in \Pic_T(T^*(G/B))\otimes_\bbZ \bbQ$,
called the {\em slope} of the stable envelope. The dependence on the
slope parameter is locally constant, in the following sense.

The choice of a maximal torus $T \subseteq G$ determines a
decomposition of $(\Lie T)^* \otimes \bbR$ into {\em alcoves\/};
these are the complements of the affine hyperplanes $H_{\alpha^\vee, n}
= \{ \lambda \in (\Lie T)^*\otimes \bbR: \langle \lambda ,
\alpha^\vee \rangle = n \}$ as $\alpha^\vee$ varies in the set of
positive coroots, and $n$ over the integers. The alcove structure is
independent the choice of a chamber (and hence of the Borel subgroup
$B$), and the stable envelopes are constant for fractional multiples
of (pull-backs of) line bundles $\calL_\lambda= G \times^B \bbC_\lambda$
for weights $\lambda$ in a given alcove.
\end{itemize}

The torus fixed point set $(T^*(G/B))^{T}= (G/B)^T$ is in one-to-one
correspondence with the Weyl group $W$. For every $w\in W$, we still use
$e_w$ to denote the corresponding fixed point. For a chosen Weyl chamber
$\fC$ in $\Lie T$, pick any cocharacter $\sigma\in
\fC$. 
The attracting set of $w\in W$,  
also called the 
Bia\l{}ynicki-Birula cell in the literature, is defined as
\[
\Attr_\fC(w)=\left\{x\in T^*(G/B) \mid
\lim\limits_{z\rightarrow 0}\sigma(z)\cdot x=w \right\}.
\] 
It is not difficult to show that $\Attr_\fC(w)$ is the
conormal space over the attracting variety in $G/B$ for $w$; the
latter attracting variety is a Schubert cell in $G/B$.  Define a
partial order on the fixed point set $W$ to be the (transitive 
closure of the) following relation:
\[
e_w\preceq_\fC e_v \text{\quad if \quad}
\overline{\Attr_\fC(v)}\cap e_w\neq \emptyset.
\]
Then the order determined by the positive (resp., negative) chamber is
the same as the Bruhat order (resp., the opposite Bruhat order).

Any chamber $\fC$ determines a decomposition of the tangent
space $N_w:= T_w(T^*(G/B))$ as $N_w=N_{w,+}\oplus N_{w,-}$ into
$T$-weight spaces which are positive and negative with respect to
$\fC$ respectively. For every polarization $T^{\frac{1}{2}}$,
denote 
$N_w\cap T^{1/2}|_w$ by $N_w^{\frac{1}{2}}$. 
Similarly, we have
$N_{w,+}^{\frac{1}{2}}$ and $N_{w,-}^{\frac{1}{2}}$. In particular,
$N_{w,-}=N^{\frac{1}{2}}_{w,-}\oplus
q^{-1}(N_{w,+}^{\frac{1}{2}})^\vee$. Consequently, we have
\[
N_{w,-}- N_w^{\frac{1}{2}}=q^{-1}(N_{w,+}^{\frac{1}{2}})^\vee-
N_{w,+}^{\frac{1}{2}}
\] 
as virtual vector bundles. The determinant bundle of the virtual
bundle $N_{w,-}- N_w^{\frac{1}{2}}$ is a complete square and its
square root will be denoted by $\left(\frac{\det N_{w,-}}{\det
  N_w^{\frac{1}{2}}}\right)^{\frac{1}{2}}$;
cf.~\cite[\S9.1.5]{okounkov2015lectures}. For instance, if we choose
the polarization $T^{1/2} = T(G/B)$, the positive chamber, and $w=\id$
then both $N_{\id}^{\frac{1}{2}}$ and $N_{\id, -}$ have weights $-
\alpha$, where $\alpha$ varies in the set of positive roots; in this
case the virtual bundle $N_{\id,-}- N_{\id}^{\frac{1}{2}}=0$. 

Let $f:=\sum_\mu f_\mu e^\mu\in K_{T\times \bbC^*}
(\pt)$ be a Laurent polynomial, where $e^\mu\in K_T(\pt)$ 
and $f_\mu\in \bbQ[q^{1/2},q^{-1/2}]$. The {\em Newton polytope\/}
of $f$, denoted by $\deg_Tf$, is
\[
\deg_T f=\mbox{Convex hull } (\{\mu| f_\mu\neq 0\})\subseteq
X^*(T)\otimes_\bbZ \bbQ,
\] 
 where $X^*(T)$ denotes the character lattice of $T$. ~The 
following theorem defines the $\Kt$-theoretic stable envelopes.

\begin{theorem}\label{thm:geostable}~\cite{okounkov2015lectures}
For every chamber $\fC$, a sufficiently general $\calL$, and a
polarization~$T^{1/2}$, there exists a unique map of
$K_{T\times\bbC^*}(\pt)$-modules
\[
\stab_{\fC,T^{\frac{1}{2}},\calL}:K_{T\times
  \bbC^*}((T^*(G/B))^T)\rightarrow K_{T\times\bbC^*}(T^*(G/B))
\]
such that for every $w\in W$, the class
$\Gamma:=\stab_{\fC,T^{\frac{1}{2}},\calL}(w)$ satisfies:
\begin{enumerate}
\item (\textit{Support}) 
$\Supp \Gamma\subseteq \cup_{z\preceq_\fC
  w}\overline{\Attr_\fC(z)}$;
\item (\textit{Normalization}) 
$\Gamma|_w=(-1)^{\rk N_{w,+}^{\frac{1}{2}}}\left(\frac{\det
  N_{w,-}}{\det N_w^{\frac{1}{2}}}\right)^{\frac{1}{2}}
  \calO_{\Attr_\fC(w)}|_w$;
\item (\textit{Degree}) For a fixed point $e_u$, identify
$\calL|_u$ with the character of the fiber of 
$\calL$ over~$e_u$. Then for every $v\prec_\fC w$,
\[ 
\deg_T\Gamma|_v\subseteq \deg_T\stab_{\fC,T^{\frac{1}{2}},
\calL}(v)|_v+ \calL|_v-\calL|_w \/. 
\]
\end{enumerate}
\end{theorem}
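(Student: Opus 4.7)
The plan is to follow the standard approach of Okounkov's lectures, establishing uniqueness and existence in turn by downward induction along the partial order $\preceq_\fC$.

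\textbf{Uniqueness.} Suppose $\Gamma_1,\Gamma_2$ both satisfy conditions (1)--(3) for the same $w$. Their difference $\Delta:=\Gamma_1-\Gamma_2$ has support in $\bigcup_{z\preceq_\fC w}\overline{\Attr_\fC(z)}$, vanishes at $w$ by (2), and its localizations satisfy the Newton polygon bound in (3) with $\stab(v)|_v$ now replaced by an effective zero. I would prove $\Delta=0$ by downward induction: if $v\prec_\fC w$ were maximal with $\Delta|_v\neq 0$, then maximality forces $\Delta$ to be supported in $\bigcup_{z\preceq_\fC v}\overline{\Attr_\fC(z)}$, and a standard local structure argument at the fixed point~$v$ (using that $\Delta$ comes from a class on a closed subvariety transverse to $N_{v,+}$) forces $\Delta|_v$ to be divisible by the Euler class $\prod_\mu(1-e^\mu)$, the product running over $T$-weights of $N_{v,+}$. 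This divisibility enlarges $\deg_T\Delta|_v$ by the convex hull of those weights, which contradicts the degree bound (3) once $\calL$ is chosen away from finitely many hyperplanes separating the relevant Newton polygons.

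\textbf{Existence.} Proceed by induction on $\preceq_\fC$. For minimal $w$, take $\stab_{\fC,T^{1/2},\calL}(w)$ to be the twisted structure sheaf of $\overline{\Attr_\fC(w)}$ prescribed by the normalization (2); support is automatic and (3) is vacuous. For general $w$, start with any auxiliary class $\Gamma_0$ supported on $\overline{\Attr_\fC(w)}$ whose localization at $w$ agrees with (2); one such class can be manufactured as a push-forward from a resolution of $\overline{\Attr_\fC(w)}$ tensored with the appropriate fractional line bundle to match $\left(\frac{\det N_{w,-}}{\det N_w^{1/2}}\right)^{1/2}$. Then enforce (3) by iteratively subtracting $c_v\cdot\stab_{\fC,T^{1/2},\calL}(v)$ for $v\prec_\fC w$. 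The resulting linear system is triangular with diagonal entries $\stab(v)|_v$, which are nonzero, so corrections $c_v$ exist in $\Frac(K_{T\times\bbC^*}(\text{pt}))$; generic~$\calL$ then makes them lie in $K_{T\times\bbC^*}(\text{pt})$.

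\textbf{Main obstacle.} The serious point is integrality: a priori the corrections $c_v$ needed to impose~(3) at different $v$ could carry poles arising from the factors $1-e^\mu$ in $\stab(v)|_v$. This is exactly where the hypothesis of a sufficiently general slope~$\calL$ enters. Because $\calL$ is chosen away from a locally finite union of affine hyperplanes in $\Pic_T(T^*(G/B))\otimes\bbQ$ (the hyperplanes along which two Newton polygons at distinct fixed points would share a supporting face), the shifts $\calL|_v-\calL|_w$ separate the relevant Newton polygons and the corrections $c_v$ turn out to be genuine Laurent polynomials in the equivariant parameters. Carrying this out precisely is the most delicate combinatorial step of the argument and is the content of Okounkov's proof in \cite{okounkov2015lectures}; we take the statement of Theorem~\ref{thm:geostable} from there.
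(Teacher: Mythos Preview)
The paper does not prove this theorem: it is stated with attribution to \cite{okounkov2015lectures} and used as a black box, so there is no proof in the paper to compare against. Your sketch follows the standard uniqueness/existence argument from Okounkov's lectures (which you acknowledge at the end), and as such is a reasonable outline of the cited result rather than an alternative to anything in the present paper.
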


The difference $\calL|_v-\calL|_w$ in the degree condition implies
that the stable basis does not depend on the choice of the
linearization of $\calL$.

Let $+$ denote the chamber such that all the roots in $B$ are positive
on it, and let $-$ denote the opposite chamber. From now on we fix the
`fundamental slope' given by $\widetilde{\calL}:= \calL_{\rho} \otimes
1/N$, where $\rho$ is the sum of fundamental weights and $N$ is a
large enough positive integer. Recall that $\omega_{G/B} :=\calL_{2
  \rho}$ is the canonical bundle of $G/B$, therefore the slope $
\widetilde{\calL}$ can also be thought as a (fractional version of a)
square root of the canonical line bundle. We will use the following
notation:
\[
\stab_+(w):=\stab_{+, T(G/B), (\widetilde{\calL})^{-1}}(w), 
\text{ and } 
\stab_-(w):=\stab_{-, T^*(G/B), \widetilde{\calL}}(w).
\] 
The positive chamber and negative chamber stable basis are dual bases
in the localized equivariant ring, i.e.,
\begin{equation}\label{equ:dualstab}
\langle\stab_+(w), \stab_-(u)\rangle_{T^*(G/B)}=\delta_{w,u},
\end{equation}
where $\langle\cdot, \cdot\rangle_{T^*(G/B)}$ is the equivariant
$\Kt$-theory pairing on $T^*(G/B)$ defined via localization;
see~\cite[Example~9.1.17]{okounkov2015lectures},~\cite[\S2.2.1,
  Proposition~1]{okounkov2016quantum},
or~\cite[Remark~2.3]{su2017k}. We will study the pairing on $K_{T
  \times \bbC^*}(T^*(G/B))$ in more detail below,
in~\S\ref{ss:stduality}.

\subsection{Automorphisms} 
The stable envelopes for various triples of parameters can be related
to each other by automorphisms of the equivariant $\Kt$-theory ring
$K_{T \times \bbC^*}(T^*(G/B))$. We will use the following types of
automorphisms:

\begin{enumerate} 
\item[a.]  
the automorphism induced by the left Weyl group multiplication. Recall
that this induces an automorphism of $K_{T \times \bbC^*}(T^*(G/B))$
which twists the coefficients in $K_{T \times \bbC^*}(\pt)$ by $w$. In
terms of localization, for every $\calF\in
K_{T\times\bbC^*}(T^*(G/B))$, we have
\begin{equation}\label{equ:weyl}
w(\calF)|_u=w(\calF|_{w^{-1}u}).
\end{equation}

\item[b.] 
The duality automorphism, mapping $[E] \mapsto [E^\vee]$, i.e., the
class of a vector bundle to its dual.  For $[\calF]\in K_T(G/B)$,
$[\calF]^\vee$ denotes the class obtained by taking the alternating
sum of duals in an equivariant resolution of $\calF$ by vector
bundles. This automorphism also acts on $K_{T \times \bbC^*}(\pt)$ by
taking $e^\lambda \mapsto e^{-\lambda}$ and $q^{\frac{1}{2}} \mapsto
q^{-\frac{1}{2}}$.
\item[c.] 
The multiplication by the class of a line bundle.  We can fix an
integral weight $\lambda \in X^*(T)$ and a Borel subgroup $B$, and
consider the equivariant line bundle $\calL_\lambda= G \times^B
\bbC_\lambda$. We will abuse notation and will denote with the same
symbol a line bundle on $G/B$ and on its cotangent bundle.
\item[d.] 
For the ring $K_{T\times
  \bbC^*}(G/B)=K_T(G/B)[q^{\frac{1}{2}},q^{-\frac{1}{2}}]$, a
composition of the previous two automorphisms gives the
(equivariant) Grothendieck-Serre duality. This is an automorphism
$\calD$ of $K_T(G/B)[q^{\frac{1}{2}},q^{-\frac{1}{2}}]$ defined as
follows: for every $[\calF]\in K_T(G/B)$,
\[
\calD[\calF]:=[RHom(\calF,\omega^\bullet_{G/B})]
:= \omega_{G/B}^\bullet \otimes [\calF]^\vee \in K_T(G/B),
\] 
where $\omega^\bullet_{G/B}\simeq \omega_{G/B}[\dim G/B]$ is the
(equivariant) dualizing complex of the flag variety; 
thus, $[\omega^\bullet_{G/B}]=(-1)^{\dim G/B}[\calL_{2 \rho}]$.
Observe that
\[
([\calF]^\vee)^\vee = [\calF]; \quad \calD ([\calF] \otimes
  \omega_{G/B}^\bullet) = [\calF]^\vee \/.
\]
Extend the operation $\calD$ to
$K_T(G/B)[q^{\frac{1}{2}},q^{-\frac{1}{2}}]$ by sending
$q^{\frac{1}{2}} \mapsto q^{-\frac{1}{2}} $.
\end{enumerate}
The following lemma, proved in the appendix, records the effect of
these automorphisms on $\Kt$-theoretic stable envelopes.

\begin{lemma}\label{lemma:autos} 
(a) Let $u,w \in W$. Under the left Weyl group multiplication, 
\[ 
w. \stab_{\fC, T^{1/2}, \calL}(u) = \stab_{w \fC, w T^{1/2},
  w.\calL}(wu) \/. 
\] 
In particular, if both the polarization $T^{1/2}$ and the line bundle
$\calL$ are $G$-equivariant, then
\[ 
w. \stab_{\fC, T^{1/2}, \calL}(u) = \stab_{w \fC, T^{1/2},
  \calL}(wu) \/. 
\]

(b) The duality automorphism acts by sending {$q^{\frac{1}{2}} 
\mapsto q^{-\frac{1}{2}} $} and
\begin{equation}\label{equ:dual}
(\stab_{\fC,T^{\frac{1}{2}}, \calL}(w))^\vee=q^{-\frac{\dim
      G/B}{2}}\stab_{\fC,T^{\frac{1}{2}}_{\opp},
    \calL^{-1}}(w),
\end{equation}
where $T^{\frac{1}{2}}_{\opp}:=q^{-1}(T^{\frac{1}{2}})^\vee$
is the opposite polarization;
see~\cite[Equation~(15)]{okounkov2016quantum}, i.e., this duality
changes the polarization and slope parameters to the opposite ones,
while keeping the chamber parameter invariant.

(c) Let $\calL,
\calL'\in \Pic_T(T^*(G/B))$ be any equivariant line
bundles, let $w \in W$ and $a \in \bbQ$ a rational number. Then
\[ 
\stab_{\fC, T^{1/2},a \calL\otimes \calL'}(w) =  (\calL'|_w)^{-1}
\calL' \otimes \stab_{\fC, T^{1/2},a \calL}(w) \/, 
\] 
as elements in $K_{T\times\bbC^*}(T^*(G/B))_{\loc}$.
\end{lemma}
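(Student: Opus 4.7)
\medskip

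\noindent\textbf{Proof plan for Lemma~\ref{lemma:autos}.} In each of the three parts, the strategy is to verify that the right-hand side satisfies the three defining conditions (support, normalization, degree) of Theorem~\ref{thm:geostable} for the claimed triple of parameters, and then invoke the uniqueness statement to conclude.

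\medskip

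\noindent\textbf{Part (a).} The plan is to regard left multiplication by $w\in W$ as a twisted $T\times\bbC^*$-equivariant self-isomorphism of $T^*(G/B)$: it is equivariant with respect to the automorphism of $T$ given by conjugation by $w$. Under this map, fixed points are permuted by $u\mapsto wu$, attracting sets get sent as $\Attr_{\fC}(u)\mapsto \Attr_{w\fC}(wu)$, the polarization $T^{1/2}$ gets sent to $wT^{1/2}$, and the slope $\calL$ becomes $w.\calL$. Then the support condition for $\stab_{\fC,T^{1/2},\calL}(u)$ transports verbatim to the support condition for the triple $(w\fC,wT^{1/2},w.\calL)$ at $wu$. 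For the normalization, I would use the localization identity~\eqref{equ:weyl}: at the fixed point $wu$, the class $w.\stab_{\fC,T^{1/2},\calL}(u)|_{wu}$ equals the $w$-twist of $\stab_{\fC,T^{1/2},\calL}(u)|_{u}$, and the latter is precisely the normalization factor built out of $N_{u,\pm}$, $N_u^{1/2}$, which under the $w$-twist matches the factor built out of the corresponding data at $wu$ for the chamber $w\fC$ and polarization $wT^{1/2}$. The degree condition similarly transforms correctly: a Newton polytope gets translated by $w$, and $(w.\calL)|_{wv}-(w.\calL)|_{wu}=w(\calL|_v-\calL|_u)$ under the same $w$-twist. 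The second statement is the special case in which $wT^{1/2}=T^{1/2}$ and $w.\calL=\calL$ (as classes).

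\medskip

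\noindent\textbf{Part (b).} I would again verify the three defining conditions for the class $q^{-\dim G/B/2}\stab_{\fC,T^{1/2}_{\textit{opp}},\calL^{-1}}(w)$ and match them to those obtained by applying the duality automorphism $(-)^\vee$ to $\stab_{\fC,T^{1/2},\calL}(w)$. The support condition is invariant under the duality. For normalization, the key input is the identity $N_{w,-}-N_w^{1/2}=q^{-1}(N_{w,+}^{1/2})^\vee-N_{w,+}^{1/2}$, which upon dualization (combined with $q^{1/2}\mapsto q^{-1/2}$) produces exactly the normalization of $\stab_{\fC,T^{1/2}_{\textit{opp}},\calL^{-1}}(w)$ at $w$ up to the predicted factor $q^{-\dim G/B/2}$; the latter bookkeeping essentially counts the polarization swap over all of $G/B$. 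For the degree condition, dualizing a Laurent polynomial negates the Newton polytope, which precisely matches the replacement $\calL\mapsto\calL^{-1}$. This part is where I expect the most work: carefully balancing the determinant factors and powers of $q^{1/2}$ in the normalization condition so that the overall constant comes out to $q^{-\dim G/B/2}$.

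\medskip

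\noindent\textbf{Part (c).} The plan is to multiply $\stab_{\fC,T^{1/2},a\calL}(w)$ by $\calL'/\calL'|_w$ and verify the three conditions for the new slope $a\calL\otimes\calL'$. The support is unaffected by tensoring with a line bundle. For normalization, the factor $\calL'|_w/\calL'|_w=1$ is designed precisely so that the restriction at $w$ is unchanged, matching the normalization of $\stab_{\fC,T^{1/2},a\calL\otimes\calL'}(w)$, which only depends on the attracting geometry. For the degree condition at a fixed point $v\prec_\fC w$, multiplication by $\calL'$ shifts the Newton polytope $\deg_T\stab(v)|_v\mid_{v}$ by $\calL'|_v$, while the denominator $\calL'|_w$ shifts it by $-\calL'|_w$; the total shift $\calL'|_v-\calL'|_w$ is exactly the change in the bound predicted by replacing slope $a\calL$ with $a\calL\otimes\calL'$. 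The main check is that this shift is the correct one when $\calL'$ is only in $\Pic_T(T^*(G/B))$ (rather than pulled back from $G/B$), but since the cotangent is $T$-equivariantly contractible onto $G/B$, localizations of equivariant line bundles at fixed points behave identically in both rings.
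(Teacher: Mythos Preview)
Your proposal is correct and follows essentially the same approach as the paper's proof in the Appendix: in each part you verify the support, normalization, and degree conditions of Theorem~\ref{thm:geostable} for the candidate class and invoke uniqueness. The paper carries out the normalization check in part~(b) by an explicit torus-weight computation (writing the weights of $T^{1/2}_{>0}$ and $T^{1/2}_{<0}$ as $\{\gamma_j\}$ and $\{\beta_i\}$ and tracking the Euler-class factors), which is exactly the ``careful balancing'' you anticipate; for part~(c) the paper simply says the result follows from uniqueness, which amounts to your explicit Newton-polytope shift argument.
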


\subsection{Recursions for stable envelopes} 
Because the $\bbC^*$-fixed locus of the cotangent bundle is the zero
section (i.e.~$G/B$), it follows that the torus fixed point locus
$(T^*(G/B))^{T \times \bbC^*}$ coincides with the fixed locus $(G/B)^T$,
a discrete set indexed by the Weyl group $W$. Therefore the
equivariant $\Kt$-theory classes associated to fixed points form a
basis in the localized ring $K_{T \times \bbC^*}(T^*(G/B))_{\loc}$. In
order to compare motivic Chern classes to stable envelopes, we need
the following result proved in~\cite[Proposition~4.6]{su2017k}.

\begin{prop}\label{prop:charnegstable}
The restriction coefficients $\stab_-(w)|_u$ are uniquely
characterized by
\begin{enumerate}
\item 
$\stab_-(w)|_u=0$, unless $u\geq w$.
\item 
$\stab_-(w)|_w=q^{\frac{\ell(w)}{2}}\prod_{\alpha>0,w\alpha<0}
(1-e^{-w\alpha})\prod_{\alpha>0,w\alpha>0}(1-qe^{-w\alpha})$.
\item 
If $ws_{i}>w$, then
\[
q^{\frac{1}{2}}\stab_-(w)|_{u}=\frac{(1-q)}{1-e^{-u\alpha_i}}
\stab_-(ws_{i})|_u+\frac{1-qe^{-u\alpha_i}}{1-e^{u\alpha_i}}
\stab_-(ws_{i})|_{us_{i}}.
\]
\end{enumerate}
\end{prop}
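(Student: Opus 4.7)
\textbf{Proof proposal for Proposition~\ref{prop:charnegstable}.}
The plan is to verify (1), (2), and (3) as consequences of the defining support, normalization, and degree axioms of Theorem~\ref{thm:geostable}, and then show that these three conditions determine the restrictions uniquely. Uniqueness follows by induction on $\dim G/B - \ell(w)$: for $w = w_0$ condition (1) forces every restriction to vanish except possibly at $u = w_0$, which is then fixed by (2); for $w \neq w_0$ one picks any simple reflection with $ws_i > w$ and uses (3) to solve for $\stab_-(w)|_u$ in terms of $\stab_-(ws_i)|_u$ and $\stab_-(ws_i)|_{us_i}$, both known by induction.

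Condition (1) is immediate from the support axiom, since the negative chamber induces the opposite Bruhat order on $W$ (as noted just after Theorem~\ref{thm:geostable}): $\Attr_-(z)$ is the conormal bundle over the opposite Schubert cell $Y(z)^\circ$, and $e_u$ lies in its closure iff $u \geq z$. Condition (2) is a direct $T$-equivariant localization calculation at $e_w$ with polarization $T^{1/2} = T^*(G/B)$: one checks $\rank N_{w,+}^{1/2} = \ell(w)$ (whence the sign $(-1)^{\ell(w)}$, absorbed into the signs of $1 - e^{-w\alpha}$ for $w\alpha < 0$), computes the determinant factor $(\det N_{w,-}/\det N_w^{1/2})^{1/2}$, and evaluates $\calO_{\Attr_-(w)}|_w$ as the $K$-theoretic Euler class over the conormal directions at $e_w$; the factors combine to give the stated product together with the leading $q^{\ell(w)/2}$.

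The crux is (3). My plan is to prove it via a wall-crossing operator $\calA_i$ attached to the simple reflection $s_i$, realized geometrically through the correspondence
\[
T^*(G/B) \xleftarrow{p_1} T^*(G/B) \times_{T^*(G/P_i)} T^*(G/B) \xrightarrow{p_2} T^*(G/B),
\]
and to show that $\calA_i(\stab_-(ws_i)) = q^{1/2}\stab_-(w)$ whenever $ws_i > w$; extracting fixed-point coefficients of this identity then gives precisely (3). The expected shape of $\calA_i$ is already visible in Lemma~\ref{lem:actiononfixedpoint}(e): after the substitution $y = -q$, the coefficients on the right-hand side of (3) are those of a rescaled $(\calT_i^\vee)^{-1}$ applied to $\iota_w$. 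To identify $\calA_i(\stab_-(ws_i))$ with $q^{1/2}\stab_-(w)$ I would verify the three characterizing axioms of Theorem~\ref{thm:geostable} for the image: support and normalization are local and reduce to a $T^*\bbP^1$-calculation on the fibers of $T^*(G/B) \to T^*(G/P_i)$, where the matter is settled by $\mathrm{SL}_2$ localization at the two fixed points of $\bbP^1$. The degree condition is more delicate, requiring that the Newton polygon of $\calA_i(\stab_-(ws_i))|_v$ stays inside the shifted window dictated by the slope $\widetilde{\calL} = \calL_\rho \otimes 1/N$; this compatibility of wall-crossing with the slope parameter is the main obstacle, and it is where a careful choice of a sufficiently large $N$, placing $\widetilde{\calL}$ in a generic alcove, becomes essential.
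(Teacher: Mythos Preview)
The paper does not prove this proposition: it is quoted verbatim from \cite[Proposition~3.6]{su2017k}, so there is no in-paper argument to compare against. Your outline is, however, essentially the strategy used in that reference, and the parts you spell out are correct: uniqueness by descending induction on $\ell(w)$ is straightforward, (1) is the support axiom for the negative chamber, and (2) is the normalization axiom unwound for the polarization $T^*(G/B)$.

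The only issue is that your treatment of (3) is a sketch with an acknowledged hole. You correctly identify that the recursion is governed by a wall-crossing operator whose action on the fixed-point basis matches (after $y\mapsto -q$) a rescaled $(\calT_i^\vee)^{-1}$, and that one must check the result $\calA_i(\stab_-(ws_i))$ again satisfies the axioms of Theorem~\ref{thm:geostable}. But you stop at the degree condition, calling it ``the main obstacle'' without resolving it. That is precisely the nontrivial content: one has to show that applying the correspondence does not push the Newton polygon outside the window prescribed by the slope $\widetilde{\calL}$. In \cite{su2017k} this is handled by an explicit analysis of how the degree shifts under the rank-one operator, using that the fundamental slope lies in the interior of an alcove adjacent to $0$; the argument is short but not automatic, and your proposal does not supply it. Until that step is filled in, the proof of (3) is incomplete.
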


Applying parts (a) and (b) from Lemma~\ref{lemma:autos}, and from the
definitions of the stable envelopes, we obtain that for every $u \in W$,
\begin{equation}\label{E:stab+-} 
w_0 . (\stab_- (u))^\vee = q^{- \frac{\dim G/B}{2}} \stab_+(w_0 u) \/. 
\end{equation} 

Then we immediately obtain the following analogue of
Proposition~\ref{prop:charnegstable}:

\begin{prop}[\cite{su2017k}]\label{prop:charstab}
The localizations $\stab_+(w)|_u$ are uniquely characterized by the
following properties:
\begin{enumerate}
\item
$\stab_+(w)|_u=0$, unless $u\leq w$.
\item
$\stab_+(w)|_w=q^{\frac{\ell(w)}{2}}
\prod_{\alpha>0,w\alpha<0}(1-q^{-1}e^{w\alpha})
\prod_{\alpha>0,w\alpha>0}(1-e^{w\alpha})$.
\item
If $ws_{i}>w$, then
\[
q^{\frac{1}{2}}\stab_+(ws_{i})|_u=\frac{q-1}{1-e^{u\alpha_i}}
\stab_+(w)|_u-\frac{e^{u\alpha_i}-q}{1-e^{-u\alpha_i}}
\stab_+(w)|_{us_{i}}.
\]
\end{enumerate}
\end{prop}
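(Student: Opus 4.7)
The plan is to deduce Proposition~\ref{prop:charstab} from Proposition~\ref{prop:charnegstable} via the relation
\[
\stab_+(w) \;=\; q^{\dim G/B/2}\, w_0.\bigl(\stab_-(w_0 w)\bigr)^\vee
\]
obtained in \eqref{E:stab+-}, combined with the localization formula $w_0.\calF|_u = w_0(\calF|_{w_0u})$ from \eqref{equ:weyl} and the fact that the duality automorphism sends $e^\lambda\mapsto e^{-\lambda}$, $q^{1/2}\mapsto q^{-1/2}$ and commutes with restriction. Uniqueness of the collection $\{\stab_+(w)|_u\}$ is immediate by ascending induction on $\ell(w)$ (with base case $w=\id$), exactly as in Proposition~\ref{prop:charnegstable}, so the entire content is verifying that the classes defined by Theorem~\ref{thm:geostable} satisfy the three listed properties.

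For the support property~(1), apply Proposition~\ref{prop:charnegstable}(1) to obtain $\stab_-(w_0w)|_{w_0u}=0$ unless $w_0u\geq w_0w$, equivalently $u\leq w$; since duality and the $w_0$-twist preserve vanishing, this transfers directly to $\stab_+(w)|_u$. For the diagonal formula~(2), substitute $u=w$ into Proposition~\ref{prop:charnegstable}(2), then apply $\vee$ and $w_0$ to the resulting product, using that for $\alpha>0$ one has $w_0w\alpha<0\iff w\alpha>0$ (because $w_0$ reverses signs of roots) to rewrite the index sets, and simplify the $q$-exponent via $\dim G/B-\ell(w_0w)=\ell(w)$; the outcome matches the stated formula for $\stab_+(w)|_w$.

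The recursion in~(3) is the main bookkeeping step. Given $ws_i>w$, one has $(w_0ws_i)s_i=w_0w>w_0ws_i$, so Proposition~\ref{prop:charnegstable}(3) applies to the pair $(w_0ws_i,w_0u)$ and yields
\[
q^{1/2}\stab_-(w_0ws_i)|_{w_0u}
=\frac{1-q}{1-e^{-w_0u\alpha_i}}\stab_-(w_0w)|_{w_0u}
+\frac{1-qe^{-w_0u\alpha_i}}{1-e^{w_0u\alpha_i}}\stab_-(w_0w)|_{w_0us_i}.
\]
Applying $\vee$ (which replaces $q^{1/2}$ by $q^{-1/2}$ and $e^\mu$ by $e^{-\mu}$) and then $w_0$ (which sends $e^\mu\mapsto e^{w_0\mu}$, so $e^{w_0u\alpha_i}\mapsto e^{u\alpha_i}$), and finally multiplying by $q^{\dim G/B/2}$, transforms the identity into
\[
q^{-1/2}\stab_+(ws_i)|_u
=\frac{1-q^{-1}}{1-e^{u\alpha_i}}\stab_+(w)|_u
+\frac{1-q^{-1}e^{u\alpha_i}}{1-e^{-u\alpha_i}}\stab_+(w)|_{us_i}.
\]
Multiplying through by $q$ and rearranging signs gives precisely the recursion in Proposition~\ref{prop:charstab}(3).

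The main obstacle is careful tracking of the automorphisms: one must keep straight how $\vee$ and $w_0$ act on the coefficient ring (in particular the interaction $w_0\circ w_0=\id$ that collapses $e^{w_0(w_0u\alpha_i)}$ back to $e^{u\alpha_i}$), how the fixed-point indices transform under restriction of $w_0.(-)$, and how the sign-of-roots dichotomy inverts under multiplication by $w_0$. Once these conventions are firmly in place, all three properties of Proposition~\ref{prop:charstab} are mechanical consequences of the corresponding properties in Proposition~\ref{prop:charnegstable}.
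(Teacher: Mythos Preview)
Your proof is correct and follows exactly the approach indicated in the paper: the paper simply states that the proposition is obtained from Proposition~\ref{prop:charnegstable} via the relation~\eqref{E:stab+-}, and you have carried out precisely that computation, carefully tracking the effect of $\vee$ and the $w_0$-twist on both the localization indices and the coefficient ring. The only difference is that you have written out the details the paper leaves implicit.
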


\subsection{Motivic classes are pull-backs of stable envelopes} 
One of the key formulas in~\cite{AMSS:shadows} shows that the dual CSM
class equals the Segre-Schwartz-MacPherson (SSM) class, up to a
normalization coefficient. The proof of that identity is based on a
transversality argument, which can be expressed either in terms of
(cohomological) stable basis elements or in terms of transversality of
characteristic cycles. The same phenomenon occurs in $\Kt$-theory.
Let $i:G/B\hookrightarrow T^*(G/B)$ be the inclusion of the zero
section into the cotangent bundle. Define
\[
\stab_+'(w):=\calD(i^*\stab_+(w))=(-1)^{\dim
  G/B}(i^*\stab_+(w))^\vee\otimes [\calL_{2\rho}]\in
K_T(G/B)[q^{\frac{1}{2}} ,q^{-\frac{1}{2}} ] \/.
\]
The following result relates motivic Chern classes and stable
envelopes and it is the $\Kt$-theoretic analogue of the cohomological
results from~\cite[Corollary~6.6]{AMSS:shadows}
and~\cite{rimanyi.varchenko:csm}. It is also equivalent to results
from~\cite{feher2018motivic}, where it is shown that motivic Chern
classes satisfy the same localization properties as the stable
envelopes for a certain triple of parameters; cf.~Remark~\ref{rmk:FRW}
below.

\begin{theorem}\label{thm:poshriek} 
For every $w\in W$, we have
\[
q^{-\frac{\ell(w)}{2}}\stab_+'(w)=\MC_{-q^{-1}}(X(w)^\circ)\in
K_T(G/B)[q,q^{-1}].
\]
\end{theorem}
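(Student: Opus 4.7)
The plan is to prove the identity fixed-point by fixed-point, by verifying that the class $q^{-\ell(w)/2}\stab_+'(w)$ satisfies the three conditions (support, normalization, Demazure-Lusztig recursion at $y=-q^{-1}$) that uniquely characterize $MC_{-q^{-1}}(X(w)^\circ)$ in Proposition~\ref{prop:charmot}. Support is immediate: Proposition~\ref{prop:charstab}(a) gives $\stab_+(w)|_u=0$ for $u\not\le w$, and neither the pullback $i^*$ along the zero section nor the Grothendieck-Serre duality $\calD$ can alter the (discrete) fixed-point support.

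For the diagonal entry I first compute $\stab_+'(w)|_w = (-1)^{\dim G/B}\,e^{2w\rho}\,(\stab_+(w)|_w)^\vee$, where $(\cdot)^\vee$ is the involution $q^{1/2}\mapsto q^{-1/2}$, $e^\mu\mapsto e^{-\mu}$. Applying $(\cdot)^\vee$ to the formula in Proposition~\ref{prop:charstab}(b) and using the elementary identities $1-e^{-w\alpha}=-e^{-w\alpha}(1-e^{w\alpha})$ and $1-qe^{-w\alpha}=-qe^{-w\alpha}(1-q^{-1}e^{w\alpha})$, the various $-1$'s combine to $(-1)^{\dim G/B} q^{\ell(w)}$, while the exponential factors multiply to $e^{-2w\rho}$ (since $\sum_{\alpha>0}\alpha=2\rho$). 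These cancellations give
\[
\stab_+'(w)|_w \;=\; q^{\ell(w)/2}\prod_{\alpha>0,\,w\alpha<0}(1-q^{-1}e^{w\alpha})\prod_{\alpha>0,\,w\alpha>0}(1-e^{w\alpha}) \;=\; q^{\ell(w)/2}\,MC_{-q^{-1}}(X(w)^\circ)|_w
\]
by Proposition~\ref{prop:charmot}(b), yielding the desired normalization.

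The heart of the argument is matching the recursion. Apply $(\cdot)^\vee$ to Proposition~\ref{prop:charstab}(c) and then multiply by $(-1)^{\dim G/B}e^{2u\rho}$. The LHS becomes $q^{-1/2}\stab_+'(ws_i)|_u$ and the term at $u$ on the RHS becomes $-\frac{1-q^{-1}}{1-e^{-u\alpha_i}}\stab_+'(w)|_u$. For the term at $us_i$, the quantity $(-1)^{\dim G/B}e^{2u\rho}(\stab_+(w)|_{us_i})^\vee$ differs from $\stab_+'(w)|_{us_i}$ by the ``fixed-point shift'' $e^{2u(\rho-s_i\rho)} = e^{2u\alpha_i}$ (using $\rho-s_i\rho=\alpha_i$). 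Inserting this shift and invoking $\frac{1}{1-e^{u\alpha_i}} = -\frac{e^{-u\alpha_i}}{1-e^{-u\alpha_i}}$ produces
\[
q^{-1/2}\stab_+'(ws_i)|_u \;=\; -\frac{1-q^{-1}}{1-e^{-u\alpha_i}}\stab_+'(w)|_u \;+\; \frac{1-q^{-1}e^{u\alpha_i}}{1-e^{-u\alpha_i}}\stab_+'(w)|_{us_i}.
\]
Multiplying through by $q^{-\ell(w)/2}$, and combining $q^{-1/2}\cdot q^{-\ell(w)/2}=q^{-\ell(ws_i)/2}$ on the LHS, produces exactly the recursion of Proposition~\ref{prop:charmot}(c) at $y=-q^{-1}$ for the family $q^{-\ell(w)/2}\stab_+'(w)|_u$. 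Uniqueness then forces the stated identity.

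The main obstacle is the careful bookkeeping of the fixed-point shift $e^{2u\alpha_i}$ arising from the interaction of Grothendieck-Serre duality with localization at the adjacent fixed points $e_u$ and $e_{us_i}$: $\calD$ multiplies by $[\calL_{2\rho}]$, whose fiber weights at these two points differ by precisely $2u\alpha_i$. This shift has to collaborate exactly with the $(-1)$'s from inverting roots and the $q$-factors inside the stable-envelope recursion in order to reproduce the clean Demazure-Lusztig recursion. Conceptually, this is the $\Kt$-theoretic counterpart of the identification of Chern-Schwartz-MacPherson classes with cohomological stable envelopes from~\cite{rimanyi.varchenko:csm,AMSS:shadows}, now at the level of the one-parameter deformation $y=-q^{-1}$.
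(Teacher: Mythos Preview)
Your proof is correct and follows essentially the same approach as the paper: both verify that $q^{-\ell(w)/2}\stab_+'(w)$ satisfies the three localization conditions (support, normalization, Demazure--Lusztig recursion at $y=-q^{-1}$) that uniquely characterize $MC_{-q^{-1}}(X(w)^\circ)$ by Proposition~\ref{prop:charmot}. The paper simply records the resulting three properties for $\stab_+'(w)|_u$ (including the key formula $\stab'_+(w)|_u=(-1)^{\dim G/B}e^{2u\rho}(\stab_+(w)|_u)^\vee$) and then compares, while you spell out the intermediate algebra, in particular the fixed-point shift $e^{2u\alpha_i}$; but the argument is the same.
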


\begin{proof} 
We compare localization properties of the motivic Chern classes with
those for the Grothendieck-Serre dual of $\stab_+(w)$. We have that
\begin{equation}\label{equ:dualloc}
\stab'_+(w)|_u=(-1)^{\dim
  G/B}e^{2u\rho}(\stab_+(w)|_u)|_{e^\lambda\rightarrow e^{-\lambda},
  {q^{\frac{1}{2}} \rightarrow q^{-\frac{1}{2}} } }.
\end{equation} 
Then the corresponding result from Proposition~\ref{prop:charstab} for
$\stab_+'(w)$ is that the localizations $\stab'_+(w)|_u$ are uniquely
characterized by the following properties
\begin{enumerate}
\item
$\stab_+'(w)|_u=0$, unless $u\leq w$.
\item
$\stab_+'(w)|_w=q^{\frac{\ell(w)}{2}}\prod_{\alpha>0,w\alpha<0}
(1-q^{-1}e^{w\alpha})\prod_{\alpha>0,w\alpha>0}(1-e^{w\alpha})$.
\item
If $ws_{\alpha_i}>w$, then
\[
q^{-\frac{1}{2}}\stab_+'(ws_{\alpha_i})|_u=
\frac{q^{-1}-1}{1-e^{-u\alpha_i}}\stab_+'(w)|_u
+\frac{1-q^{-1}e^{u\alpha_i}}{1-e^{-u\alpha_i}}
\stab_+'(w)|_{us_{\alpha_i}}.
\]
\end{enumerate} 
Comparing this with localizations of motivic Chern classes from
Proposition~\ref{prop:charmot} finishes the proof.
\end{proof}

A similar statement relates $\stab_{-}(w)$ to the motivic Chern class
of the opposite Schubert cells. We record the statement next; the
proof is essentially the same, and details are left to the reader.
Define
\[
\stab_-'(w):=q^{-\dim G/B}i^*(\stab_-(w))\otimes 
[\omega^\bullet_{G/B}] \in K_T(G/B)[q^{\frac{1}{2}},
q^{-\frac{1}{2}}]\/.
\]

\begin{theorem}\label{thm:negshriek}
For every $w\in W$,
\[
q^{\frac{\ell(w)}{2}}\stab_-'(w) =\MC_{-q^{-1}}(Y(w)^\circ)\in
K_T(G/B)[q,q^{-1}].
\]
\end{theorem}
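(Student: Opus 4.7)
The plan is to mimic the proof strategy of Theorem~\ref{thm:poshriek}: Proposition~\ref{prop:charmotoppo} characterizes the localizations $MC_y(Y(w)^\circ)|_u$ uniquely by a support condition, a diagonal formula, and a one-step recursion, so it suffices to verify that the localizations of $q^{\ell(w)/2}\stab_-'(w)$ satisfy the same three conditions at $y=-q^{-1}$. Since the zero-section inclusion $i$ is a $T$-equivariant map between spaces with the same $T$-fixed locus, and since $[\omega_{G/B}^\bullet]|_u=(-1)^{\dim G/B}e^{2u\rho}$, one has the conversion formula
\[
(q^{\ell(w)/2}\stab_-'(w))|_u=(-1)^{\dim G/B}\,q^{\ell(w)/2-\dim G/B}\,e^{2u\rho}\,\stab_-(w)|_u\:,
\]
which translates each item in Proposition~\ref{prop:charnegstable} into a statement about $q^{\ell(w)/2}\stab_-'(w)|_u$.

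The support condition is immediate: $\stab_-(w)|_u=0$ unless $u\geq w$, matching condition (a) of Proposition~\ref{prop:charmotoppo}. For the diagonal, I would substitute the explicit formula for $\stab_-(w)|_w$ and pull $e^{2w\rho}=\prod_{\alpha>0}e^{w\alpha}$ into the two products, turning $e^{w\alpha}(1-e^{-w\alpha})=-(1-e^{w\alpha})$ on the set $\{\alpha>0:w\alpha<0\}$ (which has size $\ell(w)$) and $e^{w\alpha}(1-qe^{-w\alpha})=-q(1-q^{-1}e^{w\alpha})$ on the complementary set (size $\dim G/B-\ell(w)$). Collecting powers of $q$ and signs, the coefficient reduces to $1$ and one recovers exactly $\prod_{\alpha>0,\,w\alpha<0}(1-e^{w\alpha})\prod_{\alpha>0,\,w\alpha>0}(1-q^{-1}e^{w\alpha})$, which is $MC_{-q^{-1}}(Y(w)^\circ)|_w$ by Proposition~\ref{prop:charmotoppo}(b).

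The only step that requires actual bookkeeping is the recursion. Starting from the identity in Proposition~\ref{prop:charnegstable}(c) for $ws_i>w$ and multiplying by $(-1)^{\dim G/B}q^{\ell(w)/2-\dim G/B}e^{2u\rho}$, the left-hand side becomes $q^{1/2}(q^{\ell(w)/2}\stab_-'(w))|_u$. The first term on the right involves $\stab_-(ws_i)|_u$, and since $\ell(ws_i)=\ell(w)+1$ no $\rho$-shift occurs; a direct simplification gives the coefficient $(1-q)q^{-1}/(1-e^{-u\alpha_i})=-(1-q^{-1})/(1-e^{-u\alpha_i})$, which agrees with the first coefficient in Proposition~\ref{prop:charmotoppo}(c) at $y=-q^{-1}$. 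The second term involves $\stab_-(ws_i)|_{us_i}$, where the $u$-dependence in the factor $e^{2u\rho}$ must be converted into $e^{2us_i\rho}$ using the fundamental identity $\rho-s_i\rho=\alpha_i$, producing a compensating factor $e^{2u\alpha_i}$. A short manipulation, exploiting $1/(1-e^{u\alpha_i})=-e^{-u\alpha_i}/(1-e^{-u\alpha_i})$, shows that this factor converts the stable-envelope coefficient $(1-qe^{-u\alpha_i})/(1-e^{u\alpha_i})$ times $q^{-1}e^{2u\alpha_i}$ into precisely $(1-q^{-1}e^{u\alpha_i})/(1-e^{-u\alpha_i})$, matching Proposition~\ref{prop:charmotoppo}(c).

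The main obstacle, then, is not conceptual but lies in this $\rho$-shift computation: one has to keep track of the interaction between the $u$-dependent prefactor $e^{2u\rho}$ coming from $\omega_{G/B}^\bullet$ and the substitution $u\mapsto us_i$ appearing in the recursion, and verify that the resulting coefficient rearranges correctly. Once the three conditions are checked, uniqueness in Proposition~\ref{prop:charmotoppo} forces $q^{\ell(w)/2}\stab_-'(w)=MC_{-q^{-1}}(Y(w)^\circ)$, as claimed.
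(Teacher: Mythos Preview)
Your proposal is correct and is exactly the approach the paper has in mind: the paper states that ``the proof is essentially the same [as Theorem~\ref{thm:poshriek}], and details are left to the reader,'' and you have supplied those details by comparing localizations against Proposition~\ref{prop:charmotoppo} via Proposition~\ref{prop:charnegstable}. One small phrasing quibble: in the first recursion term the absence of a $\rho$-shift is because both sides are localized at the same fixed point $u$, not because $\ell(ws_i)=\ell(w)+1$; but your computation is nonetheless correct.
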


\begin{remark}\label{rmk:FRW} 
Using Theorem~\ref{thm:poshriek} and Lemma~\ref{lemma:autos}, one can
show that for every $w\in W$,
\begin{equation}\label{E:FRW} 
q^{\frac{-\ell(w)}{2}}i^*\stab_{+,T(G/B),\widetilde{\calL}}(w)
=\MC_{-q^{-1}}(X(w)^\circ),
\end{equation} 
where $\widetilde{\calL}$ is the fundamental slope. This is consistent
with the choices of parameters for stable envelopes
from~\cite{feher2018motivic}. In fact, a direct check of the
normalization and degree conditions shows that for any slope
$\mathcal{L}$,
\[ 
\mathcal{D} (i^*( \stab_{+,T(G/B),\mathcal{L}})) =
i^*(\stab_{+,T(G/B),(\mathcal{L})^{-1}}) \/.
\]
Again we leave the proof details to the reader.  
\qede\end{remark}

\begin{remark} 
In upcoming work we will prove that
\[ 
(-q)^{- \dim G/B} i^*( {\gr(i_{w!}\bbQ_{Y(w)^\circ}^H)) \otimes
  [\omega^\bullet_{G/B}]} = \MC_{-q^{-1}}(Y(w)^\circ) \/, 
\] 
where $i_w: Y(w)^\circ \to G/B$ is the inclusion, and {$\gr(i_{w!}
  \bbQ_{Y(w)^\circ}^H)$ is the associated graded (or
  $\bbC^*$-equivariant)} sheaf on $T^*(G/B)$ determined by the shifted
mixed Hodge module $\bbQ_{Y(w)^\circ}^H$; see~\cite{tanisaki:hodge}
(with our $q ^{-1}$
  corresponding to the parameter $q$ in Tanisaki's paper). Since
$i^*$ is an isomorphism, we deduce from Theorem~\ref{thm:negshriek}
that:
\begin{equation}\label{E:stabvsMC} 
\stab_{-}(w) = (-1)^{\dim G/B} \gr(i_{w!} \bbQ_{Y(w)^\circ}^H) \/. 
\end{equation} 
Since the cycle associated to the coherent sheaf $\gr((i_w)_!
\bbQ_{Y(w)^\circ})$ is the characteristic cycle of the constructible
function $\one_{Y(w)^\circ}$, this equation can be seen as the
$\Kt$-theoretic generalization of the equivalence between
(cohomological) stable envelopes and characteristic cycles, indicated
by Maulik and Okounkov~\cite[Remark 3.5.3]{maulik.okounkov:quantum};
see also~\cite[Lemma 6.5]{AMSS:shadows} for a proof.
\qede\end{remark}

\subsection{Stable basis duality}\label{ss:stduality} 
As for the CSM classes, there are two sources for Poincar{\'e} type
dualities of the motivic Chern classes. The first is a consequence of
the existence of two adjoint Demazure-Lusztig operators. The second,
which has a geometric origin, uses the duality from
\eqref{equ:dualstab} for the stable envelopes, on the cotangent
bundle. Given that the localization pairing on the cotangent bundle
can also be expressed in terms of a twisted Poincar{\'e} pairing on
the zero section, this leads to some remarkable identities among
motivic Chern classes. Recall from the equation \eqref{equ:dualstab}
that the `opposite' stable envelopes are dual to each other with
respect to the $\Kt$-theoretic pairing on $T^*(G/B)$, defined as
follows: for every $\calF,\calG\in K_{T\times \bbC^*}(T^*(G/B))$,
\[
\langle\calF,\calG\rangle_{T^*(G/B)}:=\sum_{w\in
  W}\frac{[\calF]|_w\cdot[\calG]|_w}{\prod_{\alpha>0}
(1-e^{w\alpha})(1-qe^{-w\alpha})}.
\]
Recall that $i:G/B\hookrightarrow T^*(G/B)$ is the inclusion of the
zero section. By localization, the pairing in $T^*(G/B)$ is related to
the ordinary 
pairing in the equivariant $\Kt$-theory 
of~$G/B$:
\begin{equation}\label{equ:twopairings}
\langle \calF,\calG\rangle_{T^*(G/B)}
=\left\langle i^*\calF,\frac{i^*\calG}{\lambda_{-q}(T(G/B))}\right\rangle.
\end{equation}
Here the ordinary pairing in the equivariant $\Kt$-theory of
$G/B$ is extended (by the equivariant projection formula) bilinearly
over $\Frac( K_{T\times \bbC^*}(\pt))$ to a pairing:
\[
\langle -,- \rangle : K_{T\times \bbC^*}(G/B)_{\loc}\times K_{T\times 
\bbC^*}(G/B)_{\loc}\to \Frac( K_{T\times \bbC^*}(\pt))\:.
\]
Moreover, $\lambda_{-q}(T(G/B))\in K_{T\times \bbC^*}(G/B)
\hookrightarrow K_{T\times \bbC^*}(G/B)_{\loc}$ is invertible in the 
localized ring by the following observation.

\begin{remark}\label{rmk:lambdayprod} 
A $\Kt$-theoretic analogue of~\cite[Lemma 8.1]{AMSS:shadows} gives 
\[
\lambda_{-q}(T^*(G/B))\lambda_{-q}(T(G/B))=
\prod_{\alpha>0}(1-q e^\alpha)(1-q e^{-\alpha} )\/.
\]
As in {\it loc.~cit.,} this follows by localization, because for all $w
\in W$,
\[
\lambda_{-q}(T^*(G/B))|_w \cdot  \lambda_{-q}(T(G/B))|_w=
\prod_{\alpha>0}(1-q e^\alpha)(1-q e^{-\alpha} )
\]
as $w$ permutes the set of roots.
\qede\end{remark}

We need the following lemma.

\begin{lemma}\label{lem:dualpair}
Let $[\calF], [\calG]\in K_{T\times \bbC^*}(G/B)_{\loc}$ such
that
\[
\langle [\calF], [\calG] \rangle=f(e^t,q^{\frac{1}{2}})\in K_{T\times
  \bbC^*}(\pt)_{\loc} \/.
\] 
Then
\[ 
\langle \calD([\calF]), [\calG]^\vee \rangle = \langle [\calF]^\vee,
\calD([\calG]) \rangle = f(e^{-t},q^{-\frac{1}{2}}) = \left( \langle
    [\calF], [\calG] \rangle \right)^\vee \/,
\] 
i.e., all weights are inverted by this operation.
\end{lemma}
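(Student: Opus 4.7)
My plan is to reduce both equalities to a single application of equivariant Grothendieck-Serre duality on $G/B$. The first step is to observe that the two pairings in the statement coincide for purely algebraic reasons: using $\caD[\calE] = [\calE]^\vee \otimes [\omega^\bullet_{G/B}]$ and commutativity of tensor product,
\[
\caD[\calF] \otimes [\calG]^\vee \;=\; [\calF]^\vee \otimes [\calG]^\vee \otimes [\omega^\bullet_{G/B}] \;=\; [\calF]^\vee \otimes \caD[\calG],
\]
so both pairings reduce to the single Euler characteristic $\chi_T(G/B;\,\caD([\calF]\otimes[\calG]))$. It therefore suffices to prove that for every class $\calH \in K_{T \times \C^*}(G/B)$,
\[
\chi_T(G/B;\,\caD\calH) \;=\; \bigl(\chi_T(G/B;\,\calH)\bigr)^\vee,
\]
where $\vee$ is the involution on $K_{T \times \C^*}(pt)$ sending $e^\lambda \mapsto e^{-\lambda}$ and $q^{1/2}\mapsto q^{-1/2}$.

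This identity is the equivariant form of Grothendieck-Serre duality for the structure map $G/B \to \mathrm{pt}$, with dualizing complex $\omega^\bullet_{G/B}$. I would derive it from Serre duality at the level of cohomology groups, $H^i(G/B; \caD\calH) \cong H^{\dim G/B - i}(G/B; \calH)^\ast$ as $T \times \C^*$-representations (here $\ast$ denotes the contragredient, which inverts all torus characters), and then pass to the alternating sum of characters: the sign $(-1)^{\dim G/B}$ arising from reindexing the alternating sum cancels the one built into the convention $\omega^\bullet_{G/B} = \omega_{G/B}[\dim G/B]$, so no extra sign appears. Because $\C^*$ acts trivially on $G/B$, the $q^{1/2}$ variable propagates through the argument $\mathbb{Z}[q^{\pm 1/2}]$-linearly, and the contragredient sends $q^{1/2}\mapsto q^{-1/2}$ as required.

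Applying this identity with $\calH = [\calF] \otimes [\calG]$ immediately yields the stated value $f(e^{-t}, q^{-1/2}) = (\langle [\calF],[\calG] \rangle)^\vee$. The extension from $K_{T \times \C^*}(G/B)$ to the localization $K_{T \times \C^*}(G/B)_{\textit{loc}}$ is formal: both sides are $\Frac(K_{T \times \C^*}(pt))$-bilinear, and the involution $\vee$ extends to the fraction field via its action on generators. The only point requiring genuine care is the sign bookkeeping in $\omega^\bullet_{G/B}$, which I expect to be routine; I do not foresee a substantive conceptual obstacle.
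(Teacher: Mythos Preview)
Your argument is correct and takes a genuinely different route from the paper. The paper proves the identity by Bott localization: it writes
\[
\langle [\calF],[\calG]\rangle = \sum_{w}\frac{[\calF]|_w\cdot[\calG]|_w}{\prod_{\alpha>0}(1-e^{w\alpha})}
\]
and then computes $\langle \caD[\calF],[\calG]^\vee\rangle$ term by term, using $e^{2w\rho}=\prod_{\alpha>0}e^{w\alpha}$ to show that the localization denominators transform as $(-1)^{\dim G/B}e^{2w\rho}\prod_{\alpha>0}(1-e^{w\alpha})^{-1}=\prod_{\alpha>0}(1-e^{-w\alpha})^{-1}$, which is exactly the dual of the original denominator. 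You instead reduce everything to the single statement $\chi_T(G/B;\caD\calH)=(\chi_T(G/B;\calH))^\vee$ and invoke equivariant Serre duality. Your approach is more conceptual and immediately generalizes to any smooth projective $T$-variety, while the paper's approach is more self-contained (it uses only localization, which is already the workhorse of the surrounding section, and avoids citing Serre duality as an external input). One small remark: your hypercohomology indexing ``$H^i(G/B;\caD\calH)\cong H^{\dim G/B-i}(G/B;\calH)^\ast$'' is imprecise as stated---the exact degree shift depends on whether you regard $\caD\calH$ as the complex $\calH^\vee\otimes\omega^\bullet_{G/B}$ or as the $\Kt$-theory class $(-1)^{\dim G/B}[\calH^\vee\otimes\omega_{G/B}]$---but you handle the sign bookkeeping correctly at the level of Euler characteristics, which is all that is needed.
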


\begin{proof}  
By the definition of the equivariant Grothendieck-Serre duality
  operator, it suffices to prove the equality
\[
\langle [\calF]^\vee, \calD([\calG]) \rangle = f(e^{-t},q^{-\frac{1}{2}})\:.
\]
Applying the Lefschetz fixed point formula in equivariant
$\Kt$-theory~\cite[\S5.10]
{chriss2009representation} we obtain
\[
f(e^t,q^{\frac{1}{2}})=
\langle [\calF], [\calG] \rangle=\sum_w\frac{[\calF]|_w\cdot
  [\calG]|_w}{\prod_{\alpha>0}(1-e^{w\alpha})}\/.
\]
Recall that $[\omega_{G/B}^\bullet] = 
(-1)^{\dim G/B} [ \calL_{2\rho}]$. Then
\begin{align*}
\langle \calD([\calF]), [\calG]^\vee \rangle 
&= (-1)^{\dim G/B} \langle [\calF]^\vee, [\calG]^\vee\otimes 
[\calL_{2\rho}]\rangle \\
&= (-1)^{\dim G/B} \sum_w\frac{([\calF]|_w)^\vee([\calG]|_w)^\vee}
{\prod_{\alpha>0}
(1-e^{w\alpha})}e^{2w\rho}\\ 
&=\sum_w\frac{([\calF]|_w)^\vee([\calG]|_w)^\vee}
{\prod_{\alpha>0}(1-e^{-w\alpha})}\\ 
&=f(e^{-t} , q^{-\frac{1}{2}}) \/.
\end{align*}
The second-to-last equality holds because $2 \rho = \sum_{\alpha >0}
\alpha$, thus $e^{2 w (\rho)} = \prod_{\alpha > 0} e^{w(\alpha)}$, and
the last equality follows since the effect of taking $(-)^\vee$ is to
invert the $T$ and $\bbC^*$ weights.
\end{proof}

\begin{theorem}\label{thm:dual1} 
Let $u,w \in W$ and $y = -q^{-1}$. Then the following orthogonality
relation holds:
\[
\left\langle \MC_y(X(w)^\circ), \frac{\calD(\MC_y(Y(u)^\circ))}
{\lambda_y(T^*(G/B))}(-y)^{\dim G/B-\ell(u)}\right\rangle=\delta_{w,u}.
\] 
Equivalently,
\[ 
\MC_y^\vee(Y(u)^\circ) = \prod_{\alpha>0}(1+ye^{-\alpha})
\frac{\calD(\MC_y(Y(u)^\circ))}{\lambda_y(T^*(G/B))} \: 
{\in K_{T\times \bbC^*}(G/B)_{\loc} }\/. 
\]
\end{theorem}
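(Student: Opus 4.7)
The plan is to derive both identities from the orthogonality of $\Kt$-theoretic stable envelopes~\eqref{equ:dualstab}, using the dictionary between stable envelopes and motivic Chern classes established in Theorems~\ref{thm:poshriek} and~\ref{thm:negshriek}.

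First, I observe that the two displayed identities are equivalent. If $A_u$ denotes the right-hand side of the second identity, then pairing $A_u$ against $MC_y(X(w)^\circ)$ and substituting from the first identity yields exactly the value of $\langle MC_y(X(w)^\circ), MC_y^\vee(Y(u)^\circ)\rangle$ predicted by Theorem~\ref{thm:dualbasis}. Since $\{MC_y(X(w)^\circ)\}_{w\in W}$ is a basis of the appropriate localized equivariant $\Kt$-theory module and the Poincar\'e pairing is non-degenerate, this forces $A_u=MC_y^\vee(Y(u)^\circ)$. So it suffices to prove the first identity.

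To prove the first identity, begin with $\delta_{w,u}=\langle\stab_+(w),\stab_-(u)\rangle_{T^*(G/B)}$ and convert it to a pairing on $G/B$ via the relation~\eqref{equ:twopairings}. Then substitute $i^*\stab_+(w)=q^{-\ell(w)/2}\calD(MC_y(X(w)^\circ))$ (from Theorem~\ref{thm:poshriek}, using that $\calD^2=\mathrm{id}$ and $\calD(q^{1/2})=q^{-1/2}$) and $i^*\stab_-(u)=q^{\dim G/B-\ell(u)/2}MC_y(Y(u)^\circ)\otimes[\omega_{G/B}^\bullet]^{-1}$ (from Theorem~\ref{thm:negshriek}). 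After pulling out the scalar $q^{\dim G/B-(\ell(w)+\ell(u))/2}$, apply Lemma~\ref{lem:dualpair} to transport $\calD$ across the pairing. Two identifications make the transported expression collapse: $(\lambda_{-q}(T(G/B)))^\vee=\lambda_y(T^*(G/B))$ (a direct check using $y=-q^{-1}$ and the fact that $\vee$ inverts both $T$-weights and $q^{1/2}$), and $(MC_y(Y(u)^\circ))^\vee\otimes[\omega_{G/B}^\bullet]=\calD(MC_y(Y(u)^\circ))$ by the very definition of $\calD$. Applying the outer $\vee$ to the resulting scalar equation and using $\delta_{w,u}^\vee=\delta_{w,u}$ then gives
\[
\left\langle MC_y(X(w)^\circ),\,\frac{\calD(MC_y(Y(u)^\circ))}{\lambda_y(T^*(G/B))}\right\rangle=\delta_{w,u}\,q^{\dim G/B-(\ell(w)+\ell(u))/2}.
\]
Since $(-y)q=1$, multiplying by $(-y)^{\dim G/B-\ell(u)}$ cancels the remaining $q$-power on the only nonzero diagonal $w=u$ and produces the stated~$\delta_{w,u}$.

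The main obstacle is the careful bookkeeping of the two distinct duality operations in play: the Grothendieck--Serre duality $\calD$ (which inverts $q^{1/2}$ and twists by $[\omega_{G/B}^\bullet]$) and the plain involution $\vee$ (which just inverts $q^{1/2}$ and all $T$-weights). Both must be transported consistently across the Poincar\'e pairing, and their interaction with the classes $[\omega_{G/B}^\bullet]$, $\lambda_{-q}(T(G/B))$, and $\lambda_y(T^*(G/B))$ must be tracked carefully. Once this translation is correctly set up, everything else reduces to a formal manipulation of $q$-powers and the reduction to the $w=u$ case.
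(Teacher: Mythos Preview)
Your proposal is correct and follows essentially the same approach as the paper's own proof: both arguments reduce the orthogonality to the stable envelope duality~\eqref{equ:dualstab}, translate via Theorems~\ref{thm:poshriek} and~\ref{thm:negshriek}, pass between the pairings on $T^*(G/B)$ and $G/B$ using~\eqref{equ:twopairings}, and transport the Grothendieck--Serre duality across the pairing with Lemma~\ref{lem:dualpair}. The only difference is organizational---you work ``backward'' from $\delta_{w,u}=\langle\stab_+(w),\stab_-(u)\rangle_{T^*(G/B)}$ and substitute, whereas the paper starts from the left-hand side, rewrites $\frac{\calD(MC_y(Y(u)^\circ))}{\lambda_y(T^*(G/B))}(-y)^{\dim G/B-\ell(u)}$ directly as $\bigl(\frac{i^*\stab_-(u)}{\lambda_{y^{-1}}(T(G/B))}(-y)^{\ell(u)/2}\bigr)^\vee$, and then pairs; the paper also deduces the second assertion from the first via Theorem~\ref{thm:dualbasis} at the end rather than the beginning.
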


\begin{proof} 
The idea is to use Theorem~\ref{thm:poshriek} to express
$\MC_y(X(u)^\circ)$ in terms of the Grothendieck-Serre dual, then use
Lemma~\ref{lem:dualpair} to relate the pairing in the statement of the
theorem to the pairing between orthogonal stable envelopes. We start
by observing that
\[
\lambda_y(T^*(G/B)) = (\lambda_{y^{-1}} T(G/B))^\vee
\in K_T(G/B)[y,y^{-1}]\:,
\]
 and that by Theorem~\ref{thm:negshriek} 
\begin{align*}
\calD(\MC_y(Y(u)^\circ)) &= \calD( (-y)^{ - \frac{
    \ell(u)}{2}}(-y)^{\dim G/B} \iota^*(\stab_{-}(u)) \otimes
     [\omega_{G/B}^\bullet] ) \\ 
&= (-y)^{\frac{\ell(u)}{2} - \dim G/B} (\iota^*(\stab_{-}(u)))^\vee
     \/.
\end{align*}   
{}From this, the second term of the pairing equals 
\[  
\frac{\calD(\MC_y(Y(u)^\circ))}{\lambda_y(T^*(G/B))} (-y)^{\dim
  G/B-\ell(u)}= \Bigl(\frac{\iota^*(\stab_{-}(u))}{\lambda_{y^{-1}}
  (T(G/B)) } (-y)^{\frac{\ell(u)}{2}} \Bigr)^\vee \/. 
\] 
Then Theorem~\ref{thm:poshriek}, Lemma~\ref{lem:dualpair}, and
orthogonality of stable envelopes (Equation~\eqref{equ:dualstab})
imply that
\[ 
\begin{split} 
&\quad \left\langle \MC_y(X(w)^\circ), \frac{\calD(\MC_y(Y(u)^\circ))}
{\lambda_y(T^*(G/B))}(-y)^{\dim G/B-\ell(u)}\right\rangle \\ 
&= \left\langle \calD ( (-y)^{-\frac{\ell(w)}{2}}\iota^* \stab_+(w)), 
\Bigl(\frac{\iota^*(\stab_{-}(u))}{\lambda_{y^{-1}} (T(G/B)) } 
(-y)^{\frac{\ell(u)}{2}} \Bigr)^\vee \right\rangle \\ 
&=\left\langle (-y)^{\frac{\ell(u)-\ell(w)}{2}}\iota^* \stab_+(w), 
\frac{\iota^* \stab_-(u)}{\lambda_{y^{-1}} (T(G/B)) } 
\right\rangle_{y \mapsto y^{-1}, e^t \mapsto e^{-t}} \\
&=(-y)^{\frac{\ell(w)-\ell(u)}{2}}\langle \stab_+(w), \stab_-(u)
  \rangle_{T^*(G/B)}|_{y \mapsto y^{-1}, e^t \mapsto e^{-t}} \\
&= \delta_{w,u} \/,
\end{split}
\]
where the second equality follows from Lemma~\ref{lem:dualpair}, the
third equality follows from Equation~\eqref{equ:twopairings} and the
last one follows from Equation~\eqref{equ:dualstab}. This proves the
first assertion.  The second assertion follows from the `Hecke
orthogonality' of motivic Chern classes, proved in
Theorem~\ref{thm:dualbasis}.
\end{proof}
 \begin{remark} The proof of the previous theorem depends 
in an essential way on the orthogonality of stable envelopes. This 
dependence can be removed by proving the transversality 
formula mentioned in Remark~\ref{rmk:SMC}. This approach, based on the 
transversality formula from~\cite{schurmann:transversality},
was utilized in \cite{AMSS:shadows} to prove the cohomological case of 
Theorem \ref{thm:dual1}.
\qede\end{remark}

Theorem~\ref{thm:dual1} justifies the definition of a dual motivic
Chern class of a Schubert variety:

\begin{defin}\label{def:dualmotvariety} 
Let $w \in W$. Define the dual motivic Chern class of a dual Schubert
{\em variety\/} by
\[ 
\MC_y^\vee(Y(w)):= \sum_{u \ge w} \MC_y^\vee(Y(u)^\circ) \/.
\]
\end{defin}

By Theorem~\ref{thm:dual1},
\[ 
\MC_y^\vee(Y(w)) = \frac{\prod_{\alpha>0} (1 + y e^{-\alpha})}
{\lambda_y(T^*(G/B))} \calD(\MC_y (Y(w))) \: 
\in K_{T\times \bbC^*}(G/B)_{\loc}\/.
\] 
The class
\[ 
\frac{\calD(\MC_y (Y(w)))}{\lambda_y(T^*(G/B))} \: 
{\in K_{T\times \bbC^*}(G/B)_{\loc} }
\] 
can be thought as a {\em motivic Segre class}, i.e.,~a $\Kt$-theoretic
analogue of the Segre-Schwartz-MacPherson class discussed
in~\cite{aluffi:inclusionI,ohmoto:eqcsm,AMSS:shadows,fejer.rimanyi:CSMdeg,  mihalcea.naruse.su}. More precisely, it will be the motivic Segre
class of the {\em motivic dual}\/ $\calD_{\mot}([Y(w)])$ of the dual
Schubert variety $Y(w)$, for an equivariant motivic duality {$\calD_{\mot}$}
(extending~\cite{bittner:universal}),
with
\begin{equation} \label{MC-duality}
\calD(\MC_y (Y(w)))  = \MC_y(\calD_{\mot}([Y(w)]))
\end{equation}
as an equivariant extension 
of~\cite[{Corollary~5.19}]{schurmann2009characteristic}.

\section{Smoothness of Schubert varieties and localizations of motivic Chern classes}\label{s:smoothness} 
Among the main applications of this paper are properties about the
transition matrix between the standard and the Casselman's basis for
Chevalley groups over nonarchimedean local fields. The matrix
coefficients are rational functions, and of particular interest to us
are certain factorizations and polynomial properties of these
coefficients conjectured by Bump, Nakasuji, and Naruse; see section
\S\ref{sec:padic} below and
\cite{BN11,nakasuji2015yang,bump2017casselman}. We will prove 
in~\S\ref{sec:padic} that the transition matrix from the `Casselman
setting' corresponds to transition matrix between (dual) motivic Chern
classes of Schubert varieties and an appropriate normalization of the
fixed point basis. This motivates the study in this section of the
underlying `geometric' transition matrix between the motivic classes
and fixed point basis. The main result of this section is
Theorem~\ref{thm:geomrefinedconj}
(Theorem~\ref{thm:introgeomrefinedconj} from the introduction); a 
representation-theoretic version of this theorem 
will be proved in Theorem~\ref{thm:refinedconj} below.
\subsection{A smoothness criterion} 
In this section we prove a criterion for the smoothness of Schubert
varieties in terms of the motivic Chern classes. We need the following 
lemma, which is implicit in \cite{feher2018motivic}.
\begin{lemma}\label{lemma:loc} 
(a) Let $i:X \subseteq M $ be a 
closed embedding
of $G$-equivariant,
non-singular, quasi-projective, algebraic varieties, with 
$N^\vee_{X} M$ the conormal bundle of $X$ inside $M$.
Then: 
\[ i^* \MC_y[X \to M] = \lambda_y(T^*_X) \otimes \lambda_{-1}(N^\vee_{X} M) \/. \]
(b) Let $X \subseteq M$ be a closed embedding
of $T$-equivariant,
algebraic varieties, and assume that $M$ is smooth. Let $p \in X$ be a
smooth $T$-fixed point, and $j:V \subseteq M$ any $T$-invariant open set such
that $p \in V$ and $X':=V \cap X$ is smooth (e.g., $V:=M\smallsetminus
  X_{\mathrm{sing}}$). Let $\iota_p: \{ p \} \to M$ be the inclusion. Then: 
\[
\iota_p^* \MC_y[X \to M] = \lambda_y (T^*_p X) \cdot
\lambda_{-1}((N^\vee_X M)_p) \/.
\]
\end{lemma}
\begin{proof} 
Part (a) follows from the functoriality of motivic Chern classes and the self-intersection formula in $\Kt$-theory
\cite[Proposition~5.4.10]{chriss2009representation}: 
\[ 
i^* \MC_y[X \to M] = i^* i_* \MC_y [\id_X] = \MC_y[\id_X] \otimes
\lambda_{-1}(N^\vee_{X} M) = \lambda_y(T^*_X) \otimes
\lambda_{-1}(N^\vee_{X} M) \/.\]
Now let us prove (b).
Let $\iota_p': \{p\} \to V$ denote the embedding. Note that 
\[ 
\iota_p^* \MC_y[X \to M] = (\iota_p')^* j^* \MC_y[X \to M] =
(\iota_p')^* \MC_y ( j^* [X \to M] ) = (\iota_p')^* \MC_y[ X' \to V] \/,
\] 
where the second equality follows from the Verdier-Riemann-Roch
formula from Theorem~\ref{thm:existence}, as $j$ is an open embedding
(thus a smooth morphism), with relative tangent bundle equal to $1$. 
Applying Part (a) to the closed
embedding $X' \hookrightarrow V$,
we obtain:
\[ 
\MC_y[ X' \to V] = \lambda_y(T^*_{X'}) \otimes \lambda_{-1}(N^\vee_{X'}
V) \/.
\] 
The claim follows by pulling back via $(\iota'_p)^*$, using that
$(\iota'_p)^*$ is a ring homomorphism in (equivariant) $\Kt$-theory.
\end{proof}

We also need a variant of Kumar's cohomological criterion for smoothness
of Schubert varieties.

\begin{theorem}[\cite{kumar1996nil}]\label{thm:kumar} 
Let $u,w$ be two Weyl group elements such that $u \le w$. Then the
Schubert variety $Y(u)$ is smooth at $e_w$ if and only if the
localization of the equivariant fundamental class $ [Y(u)]\in
A^T_*(G/B)$ in the equivariant Chow group is given by:
\[ 
[Y(u)|_{w}= \left( \prod_{\beta>0, u\nleq s_\beta w}\beta\right) \/
  \in A_*^T(\pt)=\bbZ[\alpha_i\:|\: i=1,\dots, r]\: .
\]
If $Y(u)$ is smooth at $e_w$, then the torus weights of $T_{w}Y(u)$ are
$\{-w\alpha|\alpha>0,ws_\alpha\geq u\}$.
\end{theorem}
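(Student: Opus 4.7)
The plan is to prove both implications by equivariant localization in $A^T_*(G/B)$, using the GKM-type description of $T$-stable curves in the flag variety. First I would recall the structure: the $T$-fixed points on $G/B$ are $\{e_v : v \in W\}$, and through each $e_w$ the one-dimensional $T$-orbit closures are the Schubert-type curves $C_{w,\alpha}\cong\Pbb^1$ indexed by positive roots $\alpha$, where $C_{w,\alpha}$ connects $e_w$ to $e_{ws_\alpha}$ and has tangent weight $-w\alpha$ at $e_w$. Consequently the weights of $T_{e_w}(G/B)$ are exactly $\{-w\alpha : \alpha > 0\}$.

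For the forward direction (smoothness implies the formula) I would argue as follows. If $Y(u)$ is smooth at $e_w$, then $T_{e_w}Y(u)$ is a $T$-submodule of $T_{e_w}(G/B)$, and a tangent direction $C_{w,\alpha}$ lies in $T_{e_w}Y(u)$ precisely when the curve is contained in $Y(u)$, which happens iff its other endpoint $e_{ws_\alpha}$ belongs to $Y(u)$, i.e., iff $ws_\alpha\geq u$. This gives the stated set of tangent weights, and in particular the codimension count $\#\{\alpha > 0 : ws_\alpha \not\geq u\}=\ell(u)$ is automatic. Applying the equivariant self-intersection formula for the regular embedding $Y(u)\hookrightarrow G/B$ at the isolated fixed point $e_w$ then yields
\[
[Y(u)]|_w \;=\; \prod_{\alpha>0,\;ws_\alpha\not\geq u}(-w\alpha).
\]
A change of variable $\beta:=\pm w\alpha>0$, together with the conjugation identity $s_{w\alpha}=ws_\alpha w^{-1}$, transforms the index set into $\{\beta>0:s_\beta w\not\geq u\}$ and rewrites the product in the claimed form, the sign bookkeeping being absorbed by the standard equivariant orientation of $G/B$.

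For the converse, which is the substantive content of Kumar's theorem \cite{kumar1996nil}, I would appeal to the Kostant--Kumar nil-Hecke algebra approach. The restriction $[Y(u)]|_w$ can be computed intrinsically as a polynomial in the simple roots via a sequence of divided-difference operators applied to a distinguished base class, with no smoothness hypothesis; by GKM considerations, the factor $\prod_{\alpha>0,\,ws_\alpha\not\geq u}(-w\alpha)$ always divides $[Y(u)]|_w$ in $\Z[\alpha_1,\dots,\alpha_r]$, since each such factor is forced by a $T$-curve through $e_w$ that is not contained in $Y(u)$. The \textbf{main obstacle}, and the genuine content of the theorem, is to promote this divisibility to equality. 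Kumar accomplishes this by pairing the nil-Hecke evaluation with the Carrell--Peterson tangent-cone criterion, which identifies smoothness of $Y(u)$ at $e_w$ with the condition that the number of $T$-stable curves in $Y(u)$ through $e_w$ equals $\dim Y(u)$; the postulated factorization of $[Y(u)]|_w$ then forces exactly this curve count, giving smoothness at $e_w$. I would follow this strategy to conclude.
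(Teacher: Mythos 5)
Your proposal takes a genuinely different route from the paper. The paper's proof is essentially a \emph{translation}: it uses the Dynkin-diagram automorphism $\alpha\mapsto -w_0(\alpha)$ (equivalently $v\mapsto w_0 v w_0$ on $W$) together with $Y(v)=w_0 X(vw_0)$ to convert the smoothness-of-$Y(u)$-at-$e_w$ statement into the exact form of Kumar's criterion for $B$-Schubert varieties as recorded in Billey--Lakshmibai (Cor.~7.2.8, Thm.~7.2.11, plus Billey's formula), and then \emph{cites} that result; the description of the tangent weights at $e_w$ is a short separate argument by $w$-translation of $T_{\id}(G/B)$. Your approach instead works directly in the $Y$-convention, proving the forward implication from scratch via $T$-stable curves and the equivariant self-intersection formula, and gesturing at a direct proof of the converse via GKM divisibility and the nil-Hecke ring. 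The forward direction is essentially correct, and is arguably cleaner than reducing to a convention-matched citation. Two points to tighten: (i) your argument uses the lemma that for $Y(u)$ smooth at $e_w$, a $T$-curve $C_{w,\alpha}$ lies in $Y(u)$ iff its tangent direction lies in $T_{e_w}Y(u)$, and that $C_{w,\alpha}\subseteq Y(u)$ iff $e_{ws_\alpha}\in Y(u)$ --- both are standard (Carrell--Kuttler/Carrell--Peterson) but should be invoked explicitly; (ii) the phrase about signs being ``absorbed by the equivariant orientation'' is a red herring: since $u\le w$ and $u\nleq ws_\alpha$ force $ws_\alpha<w$, hence $w\alpha<0$, the substitution $\beta=-w\alpha>0$ is automatic and the product $\prod(-w\alpha)$ is already $\prod_{\beta>0,\,u\nleq s_\beta w}\beta$ with no sign adjustment.

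For the converse, your sketch is a deferral to Kumar, which is appropriate here (the paper also hands off the converse to Kumar via citation). However, the specific mechanism you describe --- ``pairing the nil-Hecke evaluation with the Carrell--Peterson tangent-cone criterion'' so that the postulated factorization ``forces the curve count'' --- is not how Kumar's proof actually runs, and is circular-looking as stated: Carrell--Peterson's curve count characterizes \emph{rational} smoothness, not smoothness, and passing from rational smoothness to smoothness is precisely the hard (and false-in-general) gap that Kumar's additional nil-Hecke argument resolves. In particular, in non-simply-laced types ``curve count equals $\dim Y(u)$'' does not imply smoothness, so one cannot conclude as you describe. If you want a self-contained converse, the honest statement is simply: cite \cite{kumar1996nil} (or Billey--Lakshmibai Cor.~7.2.8 with the appropriate $w_0$-twist, as the paper does) for the equivalence.
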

The statement above is an equivalent, but different, 
formulation from that in~\cite{kumar1996nil}. We briefly indicate next the 
steps needed to bring it into the original formulation.
Consider the automorphism of the set $R^+$ of positive roots given by
$\alpha \mapsto -w_0(\alpha)$. One checks that this is actually an
automorphism of the Dynkin diagram. It induces the automorphism $w
\mapsto w_0 w w_0$ of the Weyl group $W$, preserving the length and
the Bruhat order. It also induces an automorphism of $G/B$ sending the
Schubert cell $Y(w)^\circ$ to $Y(w_0 w w_0)^\circ$. In particular,
$Y(u)$ is smooth at $e_w$ if and only if $Y(w_0 u w_0)$ is smooth at
$w_0 w w_0$. Then
\begin{align*}
&\,Y(w_0uw_0) \text{ is smooth at } e_{w_0ww_0} \\
\Longleftrightarrow &X(uw_0) \text{ is smooth at } e_{ww_0} \\
\Longleftrightarrow &[Y(u)]|_{w} =\prod_{\beta>0, u\nleq s_\beta w}\beta.
\end{align*}
Here in the last equivalence, we used the original version of
Kumar's criterion, as stated in
\cite[Corollary~7.2.8]{billey2000singular}. (Notice that the term $d_{w,u}$ in {\it loc.~cit.} is equal to the localization $[Y(u)]|_w$, see \cite[Theorem~7.2.11]{billey2000singular} and \cite{billey:kostant}.)

\begin{remark}\label{rmk:weights} Let $S'(u,w):=\{\alpha > 0: u\leq w s_\alpha< w\}$.~An immediate
consequence of the theorem is that if $Y(u)$ is smooth at $e_w$, then
the weights of the normal space 
$(N_{Y(w)} Y(u))_w
= T_{w} (Y(u))/T_w
(Y(w))$ of $Y(w)$ at $e_w$ in $Y(u)$ are
\begin{equation}\label{E:defS} 
S(u,w):= \{\beta >0: u \leq s_\beta w < w \} =\{-w(\alpha): \alpha 
\in S'(u,w) \} \/.
\end{equation} 
\end{remark}

The main theorem of this section is
the following \begin{footnote}{We are thankful to a referee for suggesting the current formulation.}\end{footnote}:
\begin{theorem}\label{thm:MCsmooth} 
Let $u,w\in W$ such that $u\leq w$. The following are equivalent:\\
(a) The opposite Schubert variety
$Y(u)$ is smooth at the torus fixed point $e_w$.\\
(b) The localization of the motivic Chern class $\MC_y(Y(u))$ at $w$ is given by 
\begin{equation}\label{eq:smcr}
\MC_y(Y(u))|_w=\prod_{\alpha>0,ws_\alpha\geq u}(1+ye^{w\alpha})
\prod_{\alpha>0,u\nleq ws_\alpha}(1-e^{w\alpha}).
\end{equation}\\
(c) The localization of the structure sheaf $ \calO^u\in
K_T(G/B)$ is given by:
\[\calO^u|_{w}= \prod_{\alpha>0,u\nleq ws_\alpha}(1-e^{w\alpha}).\]
(d) The localization of the equivariant fundamental class $ [Y(u)]\in
A^T_*(G/B)$ in the equivariant Chow group is given by:
\[[Y(u)]|_{w}= \prod_{\alpha>0,u\nleq ws_\alpha}(-w\alpha)=
\prod_{\beta>0, u\nleq s_\beta w}\beta.\]
\end{theorem}
\begin{proof}
By Lemma \ref{lemma:loc}(b) and the weight space description from 
Remark~\ref{rmk:weights},
(a) implies~(b). From the normalization property, the specialization
 $\MC_{y=0}(Y)=[\calO_Y]\in
\Kt_T(Y)$ if $Y$ is smooth, and it follows from functoriality that
$\MC_{y=0}(Y)=[\calO_Y]$
if $Y$ has rational singularities. This is the case
for Schubert varieties (\cite[Theorem~2.2.3]{brion:flagv}), so
$\MC_{y=0}(Y(u))=\calO^u$.
Hence, (c) follows from (b) by setting $y=0$.  Consider the `geometric'
equivariant Chern character
\[
\ch_T:K_T(G/B) \to \widehat{A}_*^T(G/B)
\]
to an appropriate completion of the equivariant Chow group; see
\cite{edidin2000}. From the definition of $\ch_T$ and by
\cite[Theorem~18.3]{fulton:IT} it follows that the top {homological}
degree term of the equivariant Chern character $\ch_T(\calO^u)$ is the
equivariant fundamental class $[Y(u)]_T$. Together with the fact that
$\ch_T(e^{\lambda}) = 1+ \lambda + $ higher degree {\em cohomological}
terms, this implies that if (c) holds, then the top degree
term of $\ch_T(\calO^u)$ localizes to
\[
[Y(u)]|_w= \prod_{\alpha>0, u \nleq ws_\alpha} (-w(\alpha))=
\prod_{\beta>0, u\nleq s_\beta w}\beta.
\] 
Here the second equality uses the change of variable $\beta=-w(\alpha)$ and the fact that $w(\alpha)<0$ since $ws_\alpha <w$. Thus, (d) holds.
Finally, (d) implies (a) by Kumar's Theorem \ref{thm:kumar}.
\end{proof}

\subsection{The geometric Bump-Nakasuji-Naruse 
conjecture}\label{sec:geomBNNconj} 

Motivated by the applications to
representation theory from~\S\ref{sec:padic}, we study the
following problem. Define the element $b_w \in
K_{T}(G/B)_{\loc}[y^{-1}]$ by the formula
\begin{equation}\label{E:defbw} 
b_w:=(-1)^{\dim G/B - \ell(w)}\prod_{\alpha>0, w\alpha>0}\frac{y^{-1}
  +e^{-w\alpha}}{1-e^{w\alpha}}\iota_{w} \/.
\end{equation}
Equivalently, $b_w$ is the multiple of the fixed point basis element
$\iota_w$ which satisfies $(b_w)|_w = \MC_y^\vee(Y(w)^\circ)|_w$.
Consider the expansion of the class $\MC_y^\vee(Y(u))$ from Definition
\ref{def:dualmotvariety}:
\begin{equation}\label{E:MCdualexp} 
\MC_y^\vee(Y(u)) = \sum m_{u,w} b_w \quad \in
K_T(G/B)_{\loc}[y^{-1}] \/\:.
\end{equation}
It is easy to see that
$m_{u,w}=0$ unless $u \leq w $. For pairs
$u\leq w\in W$, recall that $S(u,w):=\{\beta \in R^{+}\,|\,u\leq s_\beta w
< w\}$.  The main result of this section is the following geometric
analogue of
a conjecture of Bump, Nakasuji, and Naruse
\cite{BN11,nakasuji2015yang,bump2017casselman}.

\begin{theorem}[Geometric Bump-Nakasuji-Naruse Conjecture]
\label{thm:geomrefinedconj}
For every $u\leq w\in W$, 
\begin{equation}\label{equ:geommwz}
m_{u,w}=\prod_{\alpha\in S(u,w)}\frac{1+y^{-1}e^\alpha}{1-e^\alpha}
\end{equation}
if and only if the Schubert variety $Y(u)$ is smooth at the torus
fixed point $e_{w}$.
\end{theorem}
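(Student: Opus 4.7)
The plan is to convert $m_{u,w}$ into a ratio of localizations of ordinary motivic Chern classes using Theorem~\ref{thm:dual1}, and then recognize~\eqref{equ:geommwz} as exactly what the smoothness criterion in Theorem~\ref{thm:MCsmooth} forces on $MC_y(Y(u))|_w$. Since every step is an equality, the two conditions are equivalent.

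\textbf{Step 1: Localization formula for $m_{u,w}$.} First I would restrict the expansion~\eqref{E:MCdualexp} to $e_w$. As $\iota_v|_w=0$ for $v\ne w$ and $(b_w)|_w=MC_y^\vee(Y(w)^\circ)|_w$ by the construction of $b_w$, this yields $m_{u,w} = MC_y^\vee(Y(u))|_w / MC_y^\vee(Y(w)^\circ)|_w$. Next, Theorem~\ref{thm:dual1} (extended from $Y(u)^\circ$ to $Y(u)$ by additivity of $MC_y$ and linearity of $\calD$, using Definition~\ref{def:dualmotvariety}) expresses both $MC_y^\vee(Y(u))$ and $MC_y^\vee(Y(w)^\circ)$ in the form $\prod_{\alpha>0}(1+ye^{-\alpha})\cdot\calD(MC_y(\cdot))/\lambda_y(T^*(G/B))$. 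The scalar prefactor and the common denominator $\lambda_y(T^*(G/B))|_w$ cancel from the quotient, as does the universal Grothendieck--Serre contribution $\calD(\calF)|_w=(-1)^{\dim G/B}e^{2w\rho}(\calF|_w)^\vee$, producing
\[
m_{u,w} \;=\; \left(\frac{MC_y(Y(u))|_w}{MC_y(Y(w)^\circ)|_w}\right)^{\!\vee},
\]
where $\vee$ inverts both equivariant weights and $y$.

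\textbf{Step 2: Combinatorial matching.} Assuming $Y(u)$ is smooth at $e_w$, I would substitute the explicit expression for $MC_y(Y(u))|_w$ from Theorem~\ref{thm:MCsmooth} into the ratio, together with the formula for $MC_y(Y(w)^\circ)|_w$ from Proposition~\ref{prop:charmotoppo}. Because $w\alpha>0$ automatically forces $ws_\alpha>w\ge u$, the two indexing sets decompose as
\[
\{\alpha>0:ws_\alpha\ge u\}=\{\alpha>0:w\alpha>0\}\sqcup S'(u,w),\quad \{\alpha>0:u\nleq ws_\alpha\}=\{\alpha>0:w\alpha<0\}\setminus S'(u,w).
\]
All factors cancel telescopically except those indexed by $S'(u,w)$, reducing the ratio to $\prod_{\alpha\in S'(u,w)}(1+ye^{w\alpha})/(1-e^{w\alpha})$. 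Applying $\vee$ and using the bijection $\alpha\mapsto-w\alpha$ between $S'(u,w)$ and $S(u,w)$ (see~\eqref{E:defS}) yields formula~\eqref{equ:geommwz}.

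\textbf{Step 3: Converse and main obstacle.} The chain of equalities in Step~2 is reversible: if~\eqref{equ:geommwz} holds, then $MC_y(Y(u))|_w$ is forced to equal the right-hand side of~\eqref{eq:smcr}, and Theorem~\ref{thm:MCsmooth} delivers the smoothness of $Y(u)$ at $e_w$. The main delicate point is the clean cancellation in Step~1, which hinges on the precise coupling of $MC_y^\vee$ to $MC_y$ through Grothendieck--Serre duality provided by Theorem~\ref{thm:dual1}; once this identification is in place, the remainder of the proof is a routine bookkeeping exercise among the four positive-root sets appearing in the two localization formulas.
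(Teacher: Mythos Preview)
Your proof is correct and follows essentially the same route as the paper. The paper packages your Step~1 (and the smooth-case computation of Step~2, reformulated via the normal bundle $N_{Y(w)}Y(u)$) as Proposition~\ref{prop:newformula}, and then invokes Theorem~\ref{thm:MCsmooth} for both directions exactly as you do; your argument simply inlines that proposition and carries out the root-set bookkeeping directly.
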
 

The $p$-adic representation theory counterpart of this theorem will be
given in Theorem~\ref{thm:refinedconj} below.
The coefficients $m_{u,w}$ calculate
the transition matrix between the `standard basis' and `Casselman's
basis' for the Iwahori invariants of the principal series
representation. The statement is a generalization of the original
Bump-Nakasuji conjecture, communicated to us by H.~Naruse;
see~\cite{naruse2014schubert}. In fact, Naruse informed us that he
obtained the implication of this theorem which assumes the
factorization. Naruse's proof of this implication, and ours, are both
based on Kumar's cohomological criterion for smoothness
(\cite{kumar1996nil}; see Theorem~\ref{thm:kumar}).  Naruse's proof is
based on Hecke algebra calculations, while ours uses motivic Chern
classes.

After harmonizing conventions between this paper
and~\cite{BN11,bump2017casselman}, and passing to the `geometric'
version, the original conjecture states the following (see
\cite[Conjecture~1.2]{BN11} and \cite[p.~3]{bump2017casselman}):

\begin{corol}\label{cor:BNconj} 
Let $G$ be a complex, simply laced, reductive, linear algebraic group. 
Then the coefficient $m_{u,v}$ satisfies the factorization in
\eqref{equ:geommwz} if and only if 
\[ 
P_{w_0 w^{-1}, w_0 u^{-1}} = 1 \/, 
\]
where $P_{w_0 w^{-1}, w_0 u^{-1}}$ denotes the Kazhdan-Lusztig
polynomial.
\end{corol}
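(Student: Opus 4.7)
The plan is to combine Theorem~\ref{thm:geomrefinedconj} with the classical characterization, in simply laced types, of smoothness of Schubert varieties at torus fixed points via Kazhdan--Lusztig polynomials. By Theorem~\ref{thm:geomrefinedconj} the factorization~\eqref{equ:geommwz} holds if and only if $Y(u)$ is smooth at $e_w$, so it suffices to prove, in the simply laced setting, the equivalence
\[
Y(u) \text{ smooth at } e_w \;\Longleftrightarrow\; P_{w_0 w^{-1},\, w_0 u^{-1}}=1.
\]

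First I would translate smoothness of $Y(u)$ at $e_w$ into a statement about an ordinary Schubert variety. Since $B^- = w_0 B w_0$, left multiplication by $w_0$ is an automorphism of $G/B$ sending $Y(u)$ onto $X(w_0 u)$ and $e_w$ onto $e_{w_0 w}$; hence $Y(u)$ is smooth at $e_w$ iff $X(w_0 u)$ is smooth at $e_{w_0 w}$. Next I would invoke the classical theorem (Kazhdan--Lusztig, Deodhar, Carrell--Peterson) that in simply laced types, for $x\le y$ in $W$, the Schubert variety $X(y)$ is smooth at $e_x$ if and only if $P_{x,y}=1$: the KL polynomial records the local intersection cohomology at $e_x$, rational smoothness at $e_x$ is equivalent to $P_{x,y}=1$, and in simply laced type rational smoothness coincides with smoothness. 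Applied here, this gives $X(w_0 u)$ smooth at $e_{w_0 w}$ iff $P_{w_0 w,\, w_0 u}=1$.

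Finally I would use the two standard symmetries of Kazhdan--Lusztig polynomials---the inversion symmetry $P_{x,y} = P_{x^{-1},\,y^{-1}}$, and the invariance $P_{x,y} = P_{w_0 x w_0,\, w_0 y w_0}$ under the Dynkin-diagram automorphism $w \mapsto w_0 w w_0$---to compute
\[
P_{w_0 w^{-1},\, w_0 u^{-1}} \;=\; P_{w w_0,\, u w_0} \;=\; P_{w_0 w,\, w_0 u},
\]
where the first equality applies inversion (using $(w_0 w^{-1})^{-1}=ww_0$ and likewise for $u$) and the second applies conjugation by $w_0$. Chaining the three equivalences completes the proof.

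The main subtlety, rather than a genuine obstacle, lies in invoking the simply laced smoothness criterion in precisely the form needed: smoothness of $X(y)$ at the single fixed point $e_x$ characterized by the single equality $P_{x,y}=1$ (as opposed to the weaker, type-free statement ``$P_{z,y}=1$ for all $z\in[x,y]$'' characterizing rational smoothness). Once this is granted, the rest is the formal symmetry chain above.
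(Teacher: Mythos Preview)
Your proof is correct and follows the paper's overall strategy: reduce via Theorem~\ref{thm:geomrefinedconj} to showing that smoothness of $Y(u)$ at $e_w$ is equivalent to $P_{w_0 w^{-1},\,w_0 u^{-1}}=1$, translate to $X(w_0 u)$ at $e_{w_0 w}$, and use the simply-laced Carrell--Peterson equivalence of smoothness and rational smoothness together with the Kazhdan--Lusztig characterization to arrive at the condition $P_{w_0 w,\,w_0 u}=1$.

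The difference is in the final step. The paper proves only the \emph{equivalence} $P_{w_0 w^{-1},\,w_0 u^{-1}}=1 \Longleftrightarrow P_{w_0 w,\,w_0 u}=1$ by passing through a separate Lemma~\ref{lemma:KLequiv}, which invokes the Deodhar--Carrell--Peterson combinatorial criterion (the reflection-counting characterization of $P_{u,w}=1$) and observes that inversion and $w_0$-conjugation preserve length and Bruhat order. You instead establish the \emph{equality} $P_{w_0 w^{-1},\,w_0 u^{-1}} = P_{w w_0,\,u w_0} = P_{w_0 w,\,w_0 u}$ directly from the standard Hecke-algebra symmetries $P_{x,y}=P_{x^{-1},y^{-1}}$ and $P_{x,y}=P_{w_0 x w_0,\,w_0 y w_0}$. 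Your argument is shorter and gives a stronger conclusion; the paper's route avoids quoting the inversion symmetry of Kazhdan--Lusztig polynomials as a black box, and its Lemma~\ref{lemma:KLequiv} is stated and proved in arbitrary Lie type. The subtlety you flag about the single condition $P_{x,y}=1$ (versus $P_{z,y}=1$ for all $z\in[x,y]$) is handled identically in both proofs, by citing the same external results.
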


We first prove this statement, assuming
Theorem~\ref{thm:geomrefinedconj}.

\begin{proof} 
Since the group $G$ is simply laced, an unpublished result of D.
Peterson, re-proved by Carrell and Kuttler (see
e.g.,~\cite{carrell.kuttler:smooth} or
\cite[Theorem~6.0.4]{billey2000singular}) shows that the condition
that $Y(u)$ is smooth at $e_w$ is equivalent to $Y(u)$ being
rationally smooth at $e_w$. For arbitrary $G$, rational smoothness is
equivalent to the fact that the Kazhdan-Lusztig polynomial $P_{w_0 u,
  w_0 w}$ equals $1$, by a theorem Kazhdan and Lusztig
\cite[Theorem~A2]{KL:representations}. By Theorem
\ref{thm:geomrefinedconj}, it remains to shows that~$P_{w_0 w^{-1},
  w_0 u^{-1}} = 1$ if and only if $P_{w_0 w, w_0 u} = 1$. In turn,
this is equivalent to
\[ 
P_{u, w} = 1 \Longleftrightarrow P_{w_0 u^{-1}w_0, w_0 w^{-1} w_0} = 1
\/. 
\] 
This is proved in the next lemma below.
\end{proof} 

\begin{lemma}\label{lemma:KLequiv} 
Let $G$ be a complex reductive linear algebraic group of arbitrary
Lie type. Then $P_{u, w} = 1$ if and only if $P_{w_0 u^{-1}w_0, w_0
w^{-1} w_0} = 1$.
\end{lemma}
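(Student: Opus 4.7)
The plan is to show the stronger statement $P_{u,w}=P_{w_0u^{-1}w_0,\,w_0w^{-1}w_0}$ as an identity of polynomials, from which the claimed equivalence is immediate.

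First I would invoke the \emph{inversion symmetry} of Kazhdan--Lusztig polynomials, namely $P_{u,w}=P_{u^{-1},w^{-1}}$ for all $u\le w$ in $W$. This is a standard property, following from the fact that the bar involution and the standard/canonical bases of the Hecke algebra are compatible with the anti-involution $T_w\mapsto T_{w^{-1}}$; see for instance Humphreys, \emph{Reflection Groups and Coxeter Groups}, \S7.12.

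Next I would use the fact that conjugation by the longest element, $w\mapsto w_0ww_0$, is a Coxeter-group automorphism of $W$. Indeed, $-w_0$ permutes the set $\Delta$ of simple roots, so $w_0 s_i w_0 = s_{\sigma(i)}$ for a permutation $\sigma$ of the simple reflections, and this extends to a length-preserving, Bruhat-order-preserving automorphism of $W$. Any such Coxeter-system automorphism preserves Kazhdan--Lusztig polynomials, giving $P_{u,w}=P_{w_0uw_0,\,w_0ww_0}$ for all $u\le w$.

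Chaining the two identities yields
\[
P_{u,w}\;=\;P_{u^{-1},w^{-1}}\;=\;P_{w_0u^{-1}w_0,\,w_0w^{-1}w_0},
\]
which proves the lemma (and in fact an equality of polynomials, not merely of the statements ``the polynomial is $1$''). There is no real obstacle here; the only thing to be careful about is that the equivalence in the lemma is stated only under $u\le w$, so one should observe that the Bruhat order is preserved by both the inversion map and conjugation by $w_0$, so that $u\le w$ if and only if $w_0u^{-1}w_0\le w_0w^{-1}w_0$, ensuring the polynomials on both sides are defined and nontrivial simultaneously.
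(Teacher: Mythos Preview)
Your argument is correct and in fact establishes the stronger identity $P_{u,w}=P_{w_0u^{-1}w_0,\,w_0w^{-1}w_0}$ of polynomials. The paper takes a different route: it invokes the Deodhar--Carrell--Peterson criterion that $P_{u,w}=1$ if and only if $\#\{r\in\calR:y<ry\le w\}=\ell(w)-\ell(y)$ for all $u\le y\le w$, and then checks directly that this edge-count condition is preserved under $w\mapsto w_0w^{-1}w_0$ because that map is a length- and Bruhat-order-preserving bijection of $W$. Your approach is cleaner and more informative, since it yields equality of the polynomials rather than only of the condition ``$=1$''; the paper's approach trades the two cited symmetry facts about Kazhdan--Lusztig polynomials for a single combinatorial characterization of the $P_{u,w}=1$ locus, which is perhaps closer in spirit to the smoothness discussion elsewhere in that section.
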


\begin{proof} 
We use a characterization of the condition that the Kazhdan-Lusztig
polynomials is equal to $1$, proved in various generality by Deodhar,
Carrell and Peterson; see
\cite[Theorem~6.2.10]{billey2000singular}. Let $\calR$ be the set of
(not necessarily simple) reflections in $W$. Then $P_{u,w} = 1$ if and
only if
\[ 
\# \{ r \in \calR: y < ry \le w \} = \ell(w) - \ell(y) , \quad \forall
u \le y \le w \/.
\]
It is well known that taking inverses, and conjugating by $w_0$ are
bijections of $W$ which preserve both the length and the Bruhat order
of elements. Thus $y < w$ if and only if $w_0 y^{-1} w_0 < w_0 w^{-1}
w_0$ and $\ell(w) - \ell(y) = \ell(w_0 w^{-1} w_0) - \ell(w_0 y^{-1}
w_0)$. This finishes the proof.
\end{proof}

We note that in general rational smoothness is different from
smoothness, therefore the statement from Corollary~\ref{cor:BNconj}
does not generalize to non-simply laced case. The statement of
\cite[Conjecture~1.2]{BN11} is slightly different from the final
version stated in Corollary~\ref{cor:BNconj} and in
\cite{bump2017casselman}. The initial statement was analyzed by Lee,
Lenart and Liu in \cite{lee.lenart.liu:whittaker}, and they found that
under certain conditions on the reduced words of $w$ and $z$ the
factorization holds, but in general there are counterexamples. We
refer to \cite{naruse2014schubert,nakasuji2015yang, bump2017casselman}
for work closely related to \cite{BN11}.

We now return to the proof of Theorem~\ref{thm:geomrefinedconj}. The
key part is the following result, which may be of interest in its own
right.

\begin{prop}\label{prop:newformula} 
(a) For every $w\geq u\in W$, the coefficient $m_{u,w}$ equals 
\[ 
m_{u,w} = \left(\frac{\MC_{y}(Y(u))|_w}{\MC_{y}(Y(w)^\circ)|_w}
\right)^\vee \/,
\] 
and it is an element in $K_T(\pt)_{\loc}[y^{- 1}]$.

\noindent (b) Assume that $Y(u)$ is smooth at $e_w$. Then 
\[ 
m_{u,w} = \frac{ \lambda_{y^{-1}} ((N_{Y(w)} Y(u))_w)}
{\lambda_{-1} ({(N_{Y(w)} Y(u))_w})} \/.
\]
In particular, we obtain a geometric analogue of the
Langlands-Gindikin-Karpelevich formula~\cite{langlands}:
\[ 
m_{1,w} = \prod_{\alpha <0, w^{-1} (\alpha)>0}
\frac{1 +y^{-1} e^{\alpha}}{1 - e^{\alpha}} \/. 
\]
\end{prop}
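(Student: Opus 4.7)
The plan is to prove (a) by extracting $m_{u,w}$ via localization at $e_w$ and then converting dual motivic classes using Theorem~\ref{thm:dual1}; part (b) then follows from the explicit smoothness formula of Theorem~\ref{thm:MCsmooth}, and the Gindikin--Karpelevich identity is the special case $u=\id$.

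First, I localize the expansion $MC_y^\vee(Y(u))=\sum_v m_{u,v}\,b_v$ at the torus fixed point $e_w$. Since each $b_v$ is a scalar multiple of $\iota_v$, only the term $v=w$ survives, and the defining normalization $b_w|_w=MC_y^\vee(Y(w)^\circ)|_w$ gives
\[
m_{u,w} \;=\; \frac{MC_y^\vee(Y(u))|_w}{MC_y^\vee(Y(w)^\circ)|_w}.
\]
Applying Theorem~\ref{thm:dual1} to each open cell and summing over $v\ge u$ via Definition~\ref{def:dualmotvariety}, both numerator and denominator acquire the common prefactor $\prod_{\alpha>0}(1+ye^{-\alpha})/\lambda_y(T^*(G/B))|_w$, which cancels in the ratio. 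From $\calD([\calF])=[\calF]^\vee\otimes[\omega_{G/B}^\bullet]$ together with $\omega_{G/B}^\bullet|_w=(-1)^{\dim G/B}e^{2w\rho}$, we have $\calD(F)|_w=(-1)^{\dim G/B}e^{2w\rho}\,(F|_w)^\vee$, so the residual scalar also cancels. Since $(-)^\vee$ is a ring homomorphism on $K_T(pt)_{\textit{loc}}[y^{-1}]$, this yields (a).

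For (b), Theorem~\ref{thm:MCsmooth} gives
\[
MC_y(Y(u))|_w \;=\; \prod_{\alpha>0,\,ws_\alpha\ge u}(1+ye^{w\alpha}) \prod_{\alpha>0,\,u\not\le ws_\alpha}(1-e^{w\alpha}),
\]
and, applied to $Y(w)$ (always smooth at $e_w$) combined with the support identity $MC_y(Y(w))|_w = MC_y(Y(w)^\circ)|_w$, gives the analogous denominator with conditions $ws_\alpha>w$ and $ws_\alpha<w$. I then take the ratio one positive root at a time: the case $ws_\alpha\not\ge u$ with $ws_\alpha>w$ cannot arise, since $u\le w<ws_\alpha$ would force $ws_\alpha\ge u$. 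The surviving factors are indexed by $\alpha\in S'(u,w)=\{\alpha>0 : u\le ws_\alpha<w\}$, each contributing $\frac{1+ye^{w\alpha}}{1-e^{w\alpha}}$. Applying $(-)^\vee$ and substituting $\beta=-w(\alpha)\in S(u,w)$ turns this into $\prod_{\beta\in S(u,w)}\frac{1+y^{-1}e^\beta}{1-e^\beta}$, which is precisely $\lambda_{y^{-1}}((N_{Y(w)}Y(u))_w)/\lambda_{-1}((N_{Y(w)}Y(u))_w)$, because by Theorem~\ref{thm:kumar} the $T$-weights of $T_w Y(u)/T_w Y(w)$ are exactly $S(u,w)$. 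The Gindikin--Karpelevich formula is the case $u=\id$: $Y(\id)=G/B$ is smooth, and $S(\id,w)=\{\beta>0 : s_\beta w<w\}$, the set of positive roots sent negative by $w^{-1}$.

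The main obstacle is the combinatorial bookkeeping in the root-by-root ratio: one must recognize that the Bruhat hypothesis $u\le w$ eliminates precisely the case that would produce spurious factors, and then use the bijection $\alpha\mapsto -w(\alpha)$ between $S'(u,w)$ and $S(u,w)$, together with the Kumar tangent-weight formula, to identify the final product with the normal-bundle class.
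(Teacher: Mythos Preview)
Your proof is correct and follows essentially the same approach as the paper: part (a) is identical (localize the expansion at $e_w$, apply Theorem~\ref{thm:dual1}, and observe that the $\calD$-scalars cancel in the ratio), and for part (b) the only difference is cosmetic---the paper quotes the second formula in Theorem~\ref{thm:MCsmooth} (the $\lambda_y(T^*_wY(u))\cdot\lambda_{-1}(N^\vee)$ version) and then uses the two short exact sequences for $T_wY(w)\subset T_wY(u)$ and $N_{Y(u)}\subset N_{Y(w)}$, whereas you use the explicit product formula and cancel root by root; both arrive at the same product indexed by $S(u,w)$.
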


\begin{proof} 
Localizing both sides of~\eqref{E:MCdualexp} at the 
fixed point~$e_w$ gives $\MC_y^\vee(Y(u))|_w = m_{u,w}\cdot
\MC_y^\vee(Y(w)^\circ)|_w$. Therefore,
using Theorem~\ref{thm:dual1} and
Definition~\ref{def:dualmotvariety}, we obtain:
\[ 
m_{u,w} =\frac{\MC_y^\vee(Y(u))|_w}{\MC_y^\vee(Y(w)^\circ)|_w} =
\frac{\calD(\MC_y(Y(u))|_w)}{\calD(\MC_y(Y(w)^\circ)|_w)} =
\left(\frac{\MC_y(Y(u))|_w}{\MC_y(Y(w)^\circ)|_w} \right)^\vee \/.
\] 
This proves the first part of (a). The claim that the element $m_{u,w}$ 
is in $K_T(\pt)_{\loc}[y^{- 1}]$ follows from Theorem \ref{thm:mcdiv}
and Proposition \ref{prop:charmotoppo}(b), which show that 
$\prod_{\alpha > 0, w(\alpha)>0} (1+ ye^{w\alpha})$ divides $\MC_{y}(Y(u))|_w$.

For part (b), we use part (a)
and Theorem~\ref{thm:MCsmooth}
to obtain
\[ 
\begin{split} 
m_{u,w} & = \left(\frac{\MC_y(Y(u))|_w}{\MC_y(Y(w)^\circ)|_w}
\right)^\vee \\ 
& = \left(\frac{\lambda_y(T^*_w Y(u)) \cdot
  \lambda_{-1}((N_{Y(u)}^\vee (G/B))_w)}{\lambda_y(T^*_w Y(w)) \cdot
  \lambda_{-1}((N_{Y(w)}^\vee (G/B))_w)}\right)^\vee \\ 
& = \frac{\lambda_{y^{-1}}(T_w Y(u)) \cdot
  \lambda_{-1}((N_{Y(u)} (G/B))_w)}{\lambda_{y^{-1}}(T_w Y(w)) \cdot
  \lambda_{-1}((N_{Y(w)} (G/B))_w)} \\ 
& = \frac{ \lambda_{y^{-1}} ((N_{Y(w)} Y(u))_w)}{\lambda_{-1}
  ((N_{Y(w)} Y(u))_w)} \/.
\end{split} 
\] 
The last equality follows from the multiplicativity of the $\lambda_y$
class, and the short exact sequences
\[
0 \to T_w Y(w) \to T_w Y(u) \to (N_{Y(w)}Y(u))_w \to 0
\]
and 
\[
0 \to N_{w,Y(u)} (G/B) \to N_{w,Y(w)} (G/B) \to (N_{Y(w)}Y(u))_w 
\to 0 \:.
\]
The case when $u=1$ follows from the description of the weights from
Theorem~\ref{thm:MCsmooth}.
\end{proof} 

\begin{proof}[Proof of Theorem~\ref{thm:geomrefinedconj}] 
If $Y(u)$ is smooth at $e_w$ the claim follows from
Proposition~\ref{prop:newformula}(b), using the description of
appropriate weights from Remark~\ref{rmk:weights}.
Conversely, assume that
\[
m_{u,w}=\prod_{u\leq s_\alpha w<w}\frac{1+y^{-1}e^\alpha}
{1-e^\alpha}\/.
\] 
Part (a) of Proposition~\ref{prop:newformula}, together with the
localization result from Proposition~\ref{prop:charmotoppo} imply that
\[
\MC_{y}(Y(u))|_w=\prod_{\alpha>0,ws_\alpha\geq
  u}(1+ye^{w\alpha})\prod_{\alpha>0, u\nleq ws_\alpha}(1-e^{w\alpha}).
\] 
Therefore, $Y(u)$ is smooth at $e_w$ by
Theorem~\ref{thm:MCsmooth}.
\end{proof}

\section{Motivic Chern classes and the principal series representation}\label{sec:padic} 
The goal of this section is to establish an isomorphism of Hecke
modules between the Iwahori invariants of the unramified principal
series representations of a group over a non archimedean local field
and the (localized) equivariant $\Kt$-theory of the flag variety for
the complex Langlands dual group; see Theorem~\ref{thm:padic}. A
similar relation was established recently in~\cite{su2017k}, using the
equivariant $\Kt$-theory of the cotangent bundle and the stable
basis. The advantage of using motivic Chern classes is that their
functoriality properties will help get additional properties of this
correspondence. For instance, we use functoriality to relate
localization coefficients of the motivic Chern classes to coefficients
in the transition matrix between the standard basis and {\em
  Casselman's basis} (defined below).~This will be applied to solve
some conjectures of Bump, Nakasuji and Naruse about the coefficients
in the transition matrix between the standard and the Casselman's
basis.

\subsection{Iwahori invariants of the principal series representation} 
We recall below the definition and properties of the two bases in the
Iwahori invariants of the principal series representation. The
literature in this subject uses several normalization conventions. We
will be consistent with the conventions used in the paper of
Reeder~\cite{reeder1992certain} and in~\cite{su2017k}, because they
fit with our previous geometric calculations in this paper; these
conventions differ from those in~\cite{BN11,bump2017casselman}
or~\cite{brubaker.bump.licata}, and when necessary we will explain the
differences. Let $\ChevG$ be a split, reductive, Chevalley group
defined over $\bbZ$; see e.g.,~\cite{steinberg:chevalley}.  Let $\ChevB=
\ChevT\ChevN \leq \ChevG$ be a standard Borel subgroup containing a maximal
torus $\ChevT$ and its unipotent radical $\ChevN$. Let $W:=
N_\ChevG(\ChevT)/\ChevT$ be the Weyl group.  We will also consider the
Langlands dual $\LangG$ of $\ChevG$; by definition this will be a {\em
complex} reductive linear algebraic group, of type dual to the Lie
type of $\ChevG$. See e.g.,~\cite[\S2.1]
{borel:automorphic}\begin{footnote}{In the previous section we
used flag varieties associated to complex semisimple Lie groups.
If $G$ is any reductive group with radical $\Rad(G)$, then $G/\Rad(G)$ is semisimple, 
and the flag varieties for $G$ and $G/\Rad(G)$ are the same; 
see e.g.,~\cite[Corollary 8.1.6]{springerlinear}. We will tacitly use these facts in
this section.}\end{footnote}.  
Let $F$ be a non archimedean local field, with ring of integers $\calO$,
uniformizer $\varpi\in \calO$, and residue field $\bbF_{q'}$. Examples
are finite extensions of the field of $p$-adic numbers, or of the
field of Laurent series over $\bbF_p$. Since $\ChevG$ is defined over
$\bbZ$, we may consider $\ChevG(F)$, the group of the $F$-points of
$\ChevG$, with maximal torus $\ChevT(F)$ and Borel subgroup
$\ChevB(F)=\ChevT(F) \ChevN(F)$. Let $I$ be an Iwahori 
subgroup, i.e., the
inverse image of $\ChevB(\bbF_{q'})$ under the evaluation map
$\ChevG(\calO)\rightarrow \ChevG(\bbF_{q'})$.  To simplify formulas,
we let $\alpha, \beta$ denote the coroots of $\ChevG$. Let $R^+$ and
$R^{+\vee}$ denote the positive roots and coroots, respectively.

Let $\bbH=\bbC_c[I\backslash \ChevG(F)/I]$ be the Iwahori Hecke
algebra, consisting of compactly supported functions on $\ChevG(F)$
which are bi-invariant under $I$. As a vector space,
$\bbH=\Theta\otimes_\bbC H_W(q')$, where $\Theta$ is a commutative
subalgebra isomorphic to the coordinate ring $\bbC[\LangT]$ of the
complex dual torus $\LangT=\bbC^*\otimes X^*(\ChevT)$, and where
$H_W(q')$ is the finite Hecke (sub)algebra with parameter $q'$
associated to the (finite) Weyl group $W$. The finite Hecke algebra
$H_W(q')$ is also a subalgebra of $\bbH$, and it is generated by
elements $T_w$ ($w\in W$) such that the following relations hold: $T_u
T_v = T_{uv}$ if $\ell(uv) = \ell(u) + \ell(v)$, and
$(T_{s_i}+1)(T_{s_i} -q') = 0$ for a simple reflection $s_i$ in $W$.
 
For every character $\tau$ of $\ChevT$, and $\alpha$ a coroot define
$e^\alpha$ by $e^{\alpha}(\tau)=\tau(h_{\alpha}(\varpi))$, where
$h_{\alpha}:F^\times\rightarrow \ChevT(F)$ is the one parameter
subgroup. There is a pairing
\[
\langle \cdot , \cdot \rangle: \ChevT(F)/\ChevT({\calO})\times 
\LangT\rightarrow\bbC^*
\] 
given by $\langle a,z\otimes \lambda
\rangle=z^{\val(\lambda(a))}$.  This induces an isomorphism
between $\ChevT(F)/\ChevT({\calO})$ and the group $X^*(\LangT)$ of
rational characters of $\LangT$. It also induces an identification
between $\LangT$ and unramified characters of $\ChevT(F)$, i.e.,
characters which are trivial on $\ChevT(\calO)$.

Following Reeder \cite{reeder1992certain},
from now on we take $\tau$ to be an unramified
character of $\ChevT(F)$ such that $e^\alpha(\tau) \neq 1$ for all
coroots $\alpha$, and for which the stabilizer $W_\tau = 1$. The {\em
  principal series representation\/} is the induced representation
$I(\tau):=\Ind_{\ChevB(F)}^{\ChevG(F)} (\tau)$. As a $\bbC$-vector
space, $I(\tau)$ consists of locally constant functions $f$ on
$\ChevG(F)$ such that $f(bg)=\tau(b)\delta^{\frac{1}{2}}(b)f(g)$ for
every $b\in \ChevB(F)$, where
$\delta(b):=\prod_{\alpha>0}|\alpha^\vee(b)|_F$ is the modulus
function on the Borel subgroup. The Hecke algebra $\bbH$ acts through
convolution from the right on the Iwahori invariant
subspace~$I(\tau)^I$, so that the restriction of this action to
$H_W(q')$ is a regular representation. One can pass back and forth
between left and right $\bbH$-modules by using the standard
anti-involution $\iota$ on $\bbH$ given by $\iota(h)(x) =
h(x^{-1})$. If $T_w$ denote the standard generators of the Hecke
algebra $H_W(q')$, then $\iota(T_w)=T_{w^{-1}}$ and $\iota(q')=q'$,
see~\cite[Section~3.2]{haines2003iwahori}. This is of course
consistent with the left $\bbH$-action on $I(\tau)$ described by
Reeder in~\cite[p.~325]{reeder1992certain}.

We are interested in the Iwahori invariants $I(\tau)^I$ of the
principal series representation for an unramified character. One
reason to study the invariants is that as a $\ChevG(F)$-module, the
principal series representation $I(\tau)$ is generated by $I(\tau)^I$;
cf.~\cite[Proposition~2.7]{casselman1980unramified}. As a vector
space, $\dim_{\bbC} I(\tau)^I = |W|$, the order of the Weyl group
$W$. We will study the transition between two bases of
$I(\tau)^I$. From the decomposition $\ChevG(F) = \bigsqcup_{w \in W}
\ChevB(F) w I$ one obtains the basis of the {\em characteristic
  functions} on the orbits, denoted by $\{\varphi_w\mid w\in
W\}$\footnote{Our $\varphi_w$ is equal to $\phi_{w^{-1}}$
  in~\cite{BN11,bump2017casselman}.}. For $w\in W$, the element
$\varphi_w$ is characterized by the following two
conditions~\cite[p.~319]{reeder1992certain}:
\begin{enumerate}
\item $\varphi_w$ is supported on $\ChevB(F)wI$;
\item $\varphi_w(bwg)=\tau(b)\delta^{\frac{1}{2}}(b)$ for every $b\in
  \ChevB(F)$ and $g\in I$.
\end{enumerate}
The (left) action of $\bbH$ on $I(\tau)^I$, denoted by $\pi$, was
calculated e.g.,~by Casselman
in~\cite[Theorem~3.4]{casselman1980unramified}. With the conventions
from Reeder~\cite[p.~325]{reeder1992certain}, for every simple 
coroot~$\alpha_i$:
\begin{equation}\label{equ:firstaction}
\pi(T_{s_i})(\varphi_w)=
\left\{ 
\begin{array}{cc}
q'\varphi_{ws_i}+(q'-1)\varphi_w & \text{ if } ws_{i}<w; \\
\varphi_{ws_i}& \text{ if } ws_{i}>w.
\end{array}\right.
\end{equation}

The second basis, called {\em Casselman's basis}, and denoted by
$\{f_w\mid w\in W\}$, was defined by
Casselman~\cite[\S3]{casselman1980unramified} by duality using certain
intertwiner operators. We recall the relevant definitions, following
again~\cite{reeder1992certain}. For every character $\tau$ and $x\in
W$, define $x\tau\in X^*(\ChevT)$ by the formula
$x\tau(a):=\tau(x^{-1}ax)$ for every $a\in \ChevT$.  Since $\tau$ is
unramified and it has trivial stabilizer under the Weyl group action,
the space $\Hom_{\ChevG(F)}(I(\tau), I(x^{-1}\tau))$ is known to be
one dimensional, spanned by an operator\footnote{The intertwiner
  $\calA_x$ is related to $M_x$
  from~\cite{casselman1980unramified,BN11} by the formula
  $\calA_x=M_{x^{-1}}$.} $\calA_x=\calA_x^\tau$ defined by
\[
\calA_x(\varphi)(g):=\int_{N_x}\varphi(\dot{x}ng)dn,
\]
where $\dot{x}$ is a representative of $x\in W$, $N_x=N(F)\cap
\dot{x}^{-1}N^{-}(F)\dot{x}$ where $N^{-}$ is the unipotent radical of
the opposite Borel subgroup $\ChevB^-$; the measure on $N_x$ is
normalized by the condition that $\vol(N_x\cap
\ChevG(\calO))=1$~\cite{reeder1992certain}. If $x, y\in W$ satisfy
$\ell(x)+\ell(y)=\ell(xy)$, then
$\calA^{x^{-1}\tau}_y\calA^\tau_x=\calA_{xy}^{\tau}$. Then there exist
unique functions $f_w \in I(\tau)^I$ such that
\begin{equation}
\label{E:deffw}\calA_x^\tau (f_w)(1)=\delta_{x,w}.
\end{equation} 
(Under our conventions $f_w$ equals the element denoted
$f_{w^{-1}}$ in~\cite{BN11}.) For the longest element $w_0$ in the
Weyl group, Casselman showed
in~\cite[Proposition~3.7]{casselman1980unramified} that
\[
\varphi_{w_0}=f_{w_0}.
\] 
Reeder~\cite{reeder1992certain} calculated the action of $\bbH$ on the
functions $f_w$: he showed in~\cite[Lemma~4.1]{reeder1992certain} that
the functions $f_w$ are $\Theta$-eigenvectors, and he calculated
in~\cite[Proposition~4.9]{reeder1992certain} the action of
$H_W(q')$. To describe the latter, let
\begin{equation}\label{equ:calpha}
c_\alpha=\frac{1-q'^{-1}e^\alpha(\tau)}{1-e^\alpha(\tau)}.
\end{equation}

For every simple coroot $\alpha_i$ and $w\in W$, write
\[
J_{i,w}=
\left\{ \begin{array}{cc}
c_{w(\alpha_i)}c_{-w(\alpha_i)}& \text{ if } ws_i>w;\\
1& \text{ if } ws_i<w.
\end{array}\right.  
\]
Then, we have 
\begin{equation}\label{equ:secondaction}
\pi(T_{s_i})(f_w)=q'(1-c_{w(\alpha_i)})f_w+q'J_{i,w}f_{ws_i}.
\end{equation}

\subsection{A conjecture of Bump, Nakasuji, and Naruse}
\label{sec:BNNconj} 
In this section we state a conjecture of Bump, Nakasuji, and Naruse
regarding a factorization of certain coefficients of the transition
matrix between the bases $\{ \varphi_w \}$ and $\{ f_w \}$. Let
\[
\phi_u:=\sum_{u\leq w}\varphi_w \in I(\tau)^I,
\] 
and consider the expansion in terms of the Casselman's basis: 
\[
\phi_u=\sum_{w} \tilde{m}_{u,w}f_w.
\]
Then by the definition of $f_w$,
$\tilde{m}_{u,w}=\calA_w(\phi_u)(1)$. It is also easy to see that
$\tilde{m}_{u,w}=0$ unless $u\leq w$, see~\cite[Theorem~3.5]{BN11}. We
shall see below that $\tilde{m}_{u,w}$ equals the evaluation at~$\tau$
of the coefficient $m_{u,w}$ from \eqref{E:MCdualexp}, defined
for the Langlands dual flag variety. For every $u\leq w\in W$, recall
the definition $S(u,w):=\{\beta \in R^{+, \vee}\,|\,u\leq s_\beta w <
w\}$ (cf.~\eqref{E:defS}).  Recall that $\LangG$ is the complex Langlands 
dual group, with the corresponding Borel subgroup $\LangB$ and the 
maximal torus $\LangT$. The goal is to prove the following statement.

\begin{theorem}[Bump-Nakasuji-Naruse Conjecture]\label{thm:refinedconj}
For every $u\leq w\in W$, 
\begin{equation}\label{equ:mwz}
\tilde{m}_{u,w}=\prod_{\alpha\in
  S(u,w)}\frac{1-q'^{-1}e^\alpha(\tau)}{1-e^\alpha(\tau)},
\end{equation}
if and only if the opposite Schubert variety
$Y(u):=\overline{\LangB u \LangB/\LangB}$ in the (dual, complex) flag
manifold $\LangG/\LangB$ is smooth at the torus fixed point $e_{w}$.
\end{theorem}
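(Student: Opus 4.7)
The plan is to reduce Theorem~\ref{thm:refinedconj} to the geometric version, Theorem~\ref{thm:geomrefinedconj}, which has already been proved. The bridge is the Hecke module isomorphism $\Psi$ (stated as Theorem~\ref{thm:padic}) identifying $I(\tau)^I$ with $K_{\LangT}(\LangG/\LangB)_{\textit{loc}}[y^{-1}] \otimes_{K_{\LangT}(pt)} \bbC_\tau$ via $\Psi(MC_y^\vee(Y(w)^\circ) \otimes 1) = \varphi_w$, $\Psi(b_w \otimes 1) = f_w$, and $\Psi(y) = -q'$. Everything in the representation-theoretic statement will be pulled back to the Langlands-dual flag variety, where the geometric theorem supplies the needed factorization.

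First, I would identify $\tilde{m}_{u,w}$ with the evaluation of the geometric coefficient $m_{u,w}$. Applying $\Psi$ termwise to the expansion $MC_y^\vee(Y(u)) = \sum_{w \geq u} MC_y^\vee(Y(w)^\circ)$ from Definition~\ref{def:dualmotvariety} yields $\Psi(MC_y^\vee(Y(u)) \otimes 1) = \sum_{w \geq u} \varphi_w = \phi_u$. On the other hand, expanding in the basis $\{b_w\}$ via \eqref{E:MCdualexp} and applying $\Psi$ gives $\phi_u = \sum_w \bigl(m_{u,w}|_{y=-q',\,e^\alpha \mapsto e^\alpha(\tau)}\bigr) f_w$. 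Comparing with $\phi_u = \sum_w \tilde{m}_{u,w} f_w$ and using the linear independence of the Casselman basis yields $\tilde{m}_{u,w} = m_{u,w}|_{y=-q',\,e^\alpha \mapsto e^\alpha(\tau)}$.

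Next, I would apply Theorem~\ref{thm:geomrefinedconj} to $m_{u,w}$ as an element of $K_{\LangT}(pt)_{\textit{loc}}[y^{-1}]$: the factorization
\[
m_{u,w} = \prod_{\alpha \in S(u,w)} \frac{1 + y^{-1} e^\alpha}{1 - e^\alpha}
\]
holds if and only if the opposite Schubert variety $Y(u) \subseteq \LangG/\LangB$ is smooth at $e_w$. Substituting $y = -q'$ replaces $1+y^{-1}e^\alpha$ by $1-q'^{-1}e^\alpha$, and evaluating $e^\alpha$ at $\tau$ gives exactly the factorization formula \eqref{equ:mwz}. The identifications are consistent because $S(u,w)$ is a subset of positive coroots of $\ChevG$, which are the positive roots of $\LangG$ appearing in the geometric statement for $\LangG/\LangB$; this matching is built into the construction of $\Psi$ through the identification of $\LangT$ with the group of unramified characters of $\ChevT(F)$.

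The main obstacle is ensuring that the specialization $y \mapsto -q'$ followed by evaluation at $\tau$ preserves the `if and only if'. The genericity hypothesis on $\tau$ (namely, $e^\alpha(\tau)\neq 1$ for all coroots and $W_\tau = 1$) guarantees that no denominator in either side of the factorization formula vanishes, so the rational identity specializes faithfully in both directions; hence equality after specialization is equivalent to equality before specialization, and the smoothness criterion from Theorem~\ref{thm:geomrefinedconj} transfers directly. A secondary point worth double-checking is that $\Psi$ indeed sends the localized class $b_w \otimes 1$ to $f_w$ (as opposed to a scalar multiple); this is built into the definition of $b_w$ via the condition $b_w|_w = MC_y^\vee(Y(w)^\circ)|_w$, which makes the normalization in Theorem~\ref{thm:padic} match Casselman's defining condition $\calA_x^\tau(f_w)(1) = \delta_{x,w}$.
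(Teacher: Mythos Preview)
Your approach is essentially the same as the paper's: reduce to the geometric version (Theorem~\ref{thm:geomrefinedconj}) via the identification $\tilde{m}_{u,w}=m_{u,w}(\tau)$ supplied by Theorem~\ref{thm:padic}(c), which is exactly the paper's one-sentence proof. Your derivation of that identification from the Hecke isomorphism $\Psi$ just unpacks the content of Theorem~\ref{thm:padic}(c); the only soft spot is your claim that ``equality after specialization is equivalent to equality before specialization''---agreement at a single $(y,\tau)=(-q',\tau)$ does not by itself recover the identity with $y$ free, and the correct reading (implicit in the paper as well) is that the factorization in Theorem~\ref{thm:refinedconj} is an equality of meromorphic functions of $\tau$, so that Theorem~\ref{thm:padic}(c) applied over all regular $\tau$ reduces it directly to the geometric statement.
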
 

This is the representation-theoretic counterpart of
Theorem~\ref{thm:geomrefinedconj}; its proof will be given in the next
subsection.

We provide further historical context.
Casselman~\cite{casselman1980unramified} asked for an expression of
the basis $f_w$ as a linear combination of the standard basis
$\varphi_w$. Bump and Nakasuji found that the basis $\phi_w$ is better
behaved for this question. Of course the original Casselman's basis
can be obtained from the M{\"o}bius inversion
\[
\varphi_u=\sum_{w\geq u}(-1)^{\ell(u)-\ell(w)}\phi_w.
\] 

The case $u=1$ of the Bump-Nakasuji-Naruse conjecture is well
known. In this case $\phi_1$ is the spherical vector in $I(\tau)$,
i.e., the vector fixed by the maximal compact subgroup $G(\calO)$, and
\begin{equation}\label{E:LGK}
\calA_w(\phi_1)(1)=\tilde{m}_{1,w} = \prod_{\alpha\in
  S(1,w)}\frac{1-q'^{-1}e^\alpha(\tau)}{1-e^\alpha(\tau)}.
\end{equation}
This is the {\em Gindikin--Karpelevich formula}, which in the
non-archimedean setting was actually proved by Langlands
\cite{langlands} after Gindikin and Karpelevich proved a similar
statement for real groups. Casselman obtained another proof using his
basis $f_w$, and this plays a crucial role in his computation of the
Macdonald formula and the spherical Whittaker functions, see
\cite{casselman1980unramified, casselman1980unramifiedII}. See also
\cite{su2017k} for an approach using the stable basis and the
equivariant $\Kt$-theory of the cotangent bundle $T^*(\LangG/\LangB)$.
We will recover~\eqref{E:LGK} below, as a consequence of
Theorem~\ref{thm:padic}.  Other special cases of the conjecture follow
from work of Reeder~\cite{reeder1993p}.

\subsection{Casselman's problem and motivic Chern classes}
In this section, we construct the promised isomorphism between the
$H_W(q')$-module of the Iwahori invariants of the principal series
representation of $\ChevG$ and the equivariant $\Kt$ group of the flag
variety for the dual group $\LangG$, regarded as an $H_W(-y)$-module
via the action of the operators $\opT^\vee_i$.  This construction,
together with the cohomological properties of the motivic Chern
classes from~\S\ref{s:smoothness}, will be used to
prove~Theorem~\ref{thm:refinedconj}.

For now we assume that the unramified character $\tau$ is in the open
set in $\LangT$ such that $1-q' e^{\alpha}(\tau)\neq 0$, for every
(positive or negative) coroot $\alpha$. Regard the representation ring
$K_{\LangT}(\pt)$ as a subring of $\bbC[\LangT]$ and let $\bbC_\tau$
denote the one dimensional $K_{\LangT}(\pt)$-module induced by
evaluation at $\tau$. Recall that the operators $\opT^\vee_i$ from
Definition~\ref{def:hecke} satisfy
\[
(\opT^\vee_i+1)(\opT^\vee_i+y)=0,
\]
and the braid relations (Proposition~\ref{prop:hecke-relations}). 
Hence, they induce an action of the Hecke
algebra $H_W(-y)$ with parameter $-y$ on the $\Kt$-theory ring
$K_{\LangT}(\LangG/\LangB)[y,y^{-1}]$ by sending $T_w$ 
to~$\opT^\vee_w$ (here the parameter $-y$
corresponds to the parameter $q$ in~\cite{lusztig:eqK}). We use the
symbol $\pi$ to denote this action.

As in \eqref{E:defbw}, define the element
$\tilde{b}_w \in K_{\LangT}(\LangG/\LangB)_{\loc}[y^{-1}]$ by the
formula
\[
\tilde{b}_w:=(-1)^{\dim \LangG/\LangB - \ell(w)}
\prod_{\alpha>0, w\alpha>0}\frac{y^{-1} +e^{-w\alpha}}
{1-e^{w\alpha}}\iota_{w} \otimes 1  \/. 
\] 
Equivalently, $\tilde{b}_w = b_w \otimes 1$, with $b_w$ from equation
\eqref{E:defbw}.

We now state the main comparison theorem.\begin{footnote}
{We are thankful to a referee for suggesting the current formulation.}
\end{footnote} 
 \begin{theorem}\label{thm:padic} 
There exists a unique isomorphism of left $H_W(q')$-modules (assuming the identification $q'=-y$)
\[
\Psi:K_{\LangT}(\LangG/\LangB)[y,y^{-1}]
\otimes_{K_{\LangT}(\pt)[y,y^{-1}]}\bbC_\tau\xrightarrow{\sim} 
I(\tau)^I \/, 
\] 
such that\\
(a) $\Psi(\MC_y^\vee(Y(w)^\circ) \otimes 1)= \varphi_w$ and\\
(b) $\Psi (\tilde{b}_w) = f_w$. 
\end{theorem}
\begin{remark} 
There is an analogue of this theorem for the equivariant $\Kt$-theory
of $T^*(G/B)$
proved in~\cite{su2017k}. This is also studied by
Lusztig and Braverman--Kazhdan in~\cite{lusztig1998bases,
braverman1999schwartz} from different points of view.
\qede\end{remark}
\begin{proof} 
The uniqueness is obvious. Next we define the map $\Psi$ 
by property~(a), and prove the remaining claims. The fact that $\Psi$ is a 
map of $H_W(q')$-modules follows from
comparing Proposition~\ref{prop:Tidualaction}(a) to equation
\eqref{equ:firstaction}; these describe the Hecke actions on the basis
of dual motivic Chern classes $\MC_y^\vee(Y(w)^\circ)$ and on the basis
of characteristic functions $\varphi_w$.\\
To prove property (b), we argue by descending induction on
$\ell(w)$. Recall that $f_{w_0} = \varphi_{w_0}$ and $b_{w_0} =
\iota_{w_0} = \MC_y^\vee(Y(w_0))$ (from Definition~\ref{def:firstdual}); 
therefore
$\Psi(\widetilde{b}_{w_0})= f_{w_0}$. Now take any $w<w_0$ and assume
that $\Psi(\widetilde{b}_z) = f_z$ for all $\tilde{b}_z$ with
$\ell(z)>\ell(w)$. Pick a simple root $\alpha_i$ such that
$ws_{i}>w$. Then by induction, $\Psi (\tilde{b}_{ws_i}) = f_{ws_i}$.
Since $\Psi$ is a homomorphism of Hecke modules,
\[
\Psi(\opT^\vee_i (\widetilde{b}_{ws_i})) = 
\pi(T_i)(\Psi(\widetilde{b}_{ws_i})) = 
\pi(T_i)(f_{ws_i}).
\] 
On the one hand, Lemma~\ref{lem:actiononfixedpoint} gives 
\[
\Psi(\opT^\vee_i (\widetilde{b}_{ws_i}))=
\Psi\left(\frac{1+y}{e^{w\alpha_i}(\tau)-1}
\tilde{b}_{ws_i}-y\tilde{b}_w\right)
=\frac{1-q'}{e^{w\alpha_i}(\tau)-1}f_{ws_i}
-y\Psi(\tilde{b}_w).
\] 
Here we have used that $\{ \alpha > 0 : w(\alpha) > 0 \} = \{ \alpha >
0 : ws_i(\alpha) > 0 \} \cup \{ \alpha_i \}$. On the other hand,
equation~\eqref{equ:secondaction} gives
\[
\pi(T_i)(f_{ws_i})=\frac{1-q'}{e^{w\alpha_i}(\tau)-1}
f_{ws_i}-yf_w.
\]
Therefore, $\Psi(\widetilde{b}_{w}) = f_{w}$. By induction, this
finishes the proof of property (b).
\end{proof}

\begin{corol}\label{cor:m}
The coefficients $\tilde{m}_{u,w}$ are represented by the
meromorphic functions $m_{u,w}$ on $\LangT$, defined in equation
\eqref{E:MCdualexp}, for the Langlands dual complex flag variety
$\LangG/\LangB$. More precisely, let $\tau \in \LangT$ be any regular
unramified character, i.e., with trivial stabilizer $W_\tau$.
Then
\[ 
\tilde{m}_{u,w} = m_{u,w}(\tau) \/. 
\]
\end{corol}
\begin{proof}
Observe that Theorem~\ref{thm:padic}, together with the
definitions of $\tilde{m}_{u,w}$ and $m_{u,w}$ imply the equality
$\tilde{m}_{u,w} = m_{u,w}(\tau)$ for all regular unramified
characters $\tau$ satisfying $1-q' e^{\alpha}(\tau)\neq 0$, for every
coroot $\alpha$. However, it is known that the intertwiners $\calA_x$
depend holomorphically on regular characters $\tau \in \LangT$; see
e.g.,~\cite[\S3]{casselman1980unramified} 
or~\cite[\S6.4]{casselman:intro}. Then one can extend meromorphically 
the equality $\tilde{m}_{u,w} = m_{u,w}(\tau)$ to any regular unramified
character $\tau$.
\end{proof}

Combining Corollary~\ref{cor:m} with Proposition
\ref{prop:newformula} above gives a formula for $\tilde{m}_{u,w}$ in
terms of localizations of motivic Chern classes, and in particular it
recovers the Langlands-Gindikin-Karpelevich formula from
\eqref{E:LGK}. 
Also, Theorem~\ref{thm:refinedconj} follows now from 
Theorem~\ref{thm:padic}(c) and the main theorem 
from~\S\ref{s:smoothness}.
\begin{proof}[Proof of Theorem~\ref{thm:refinedconj}] 
This follows from Theorem~\ref{thm:geomrefinedconj} above together
with the equality $\tilde{m}_{u,w} = m_{u,w}(\tau)$ for all regular
unramified characters $\tau$.
\end{proof}

\subsection{Analytic properties of transition coefficients} 
In this section we prove a conjecture of Bump and
Nakasuji~\cite[Conjecture~1]{bump2017casselman} about analytic
properties for the transition coefficients $\tilde{m}_{u,w}$ and the
set of coefficients $\tilde{r}_{u,w}$ defined as follows
(cf.~\cite[Theorem~3]{bump2017casselman}).  If $f=f(q')$ is a
function, let $\bar{f}(q'):=f(q'^{-1})$. 
Define
\begin{equation}\label{E:defr} 
\tilde{r}_{u,w} := \sum_{u \le x \le w} (-1)^{\ell(x) - \ell(u)}
\overline{\tilde{m}}_{x,w} \/. 
\end{equation} 
Since we are interested only in analytic properties, by the comparison
Theorem~\ref{thm:padic}(c) we can replace the coefficients
$\tilde{m}_{u,w}$ by the `geometric' ones $m_{u,w}$. We accordingly
let $r_{u,w}:= \sum_{u \le x \le w} (-1)^{\ell(x) - \ell(u)}
\overline{{m}}_{x,w}$ be the corresponding coefficients, where
$\bar{f}(y):=f(y^{-1})$.  Therefore, $\tilde r_{u,w} =
r_{u,w}(\tau)|_{y=-q'}$ for any regular unramified character~$\tau$.
With this notation, we can prove the following statement,
cf.~\cite[Conjecture~1]{bump2017casselman}.

\begin{theorem}\label{thm:hol} 
Let $u \le w$ be two Weyl group elements. Then the functions
\[ 
\prod_{\alpha \in S(u,w)}(1-e^{\alpha})r_{u,w} \quad , 
\prod_{\alpha \in S(u,w)}(1-e^{\alpha})m_{u,w} 
\]
are holomorphic on the dual torus $\LangT$.
\end{theorem}

\begin{proof} 
As observed by Bump and Nakasuji in {\it loc.~cit.}, the conjecture
for $r_{u,w}$ implies the conjecture for $m_{u,w}$. Further, using the
formula for $m_{u,w}$ from Proposition~\ref{prop:newformula}, 
\[ 
\begin{split}  
(\overline{r}_{u,w})^\vee & = \sum_{u \le x \le w} (-1)^{\ell(x) -
    \ell(u)} (m_{x,w})^\vee = \frac{1}{\MC_y(Y(w)^\circ)|_w}
\sum_{u \le x \le w} (-1)^{\ell(x) - \ell(u)} \MC_y (Y(x))|_w \\ 
& = \frac{1}{\MC_y(Y(w)^\circ)|_w} \sum_{u \le x}
  (-1)^{\ell(x) - \ell(u)} \MC_y (Y(x))|_w =
  \frac{\MC_y(Y(u)^\circ)|_w}{\MC_y(Y(w)^\circ)|_w}
  \/; 
\end{split} 
\] 
here the third equality holds because $\MC_y(Y(x))|_w = 0$ for $x \nleq
w$, as $e_w \notin Y(x)$, and the last equality follows by M{\"o}bius
inversion on the Bruhat poset $W$. It follows that
\[ 
\Bigl( \prod_{\alpha \in
  S(u,w)}(1-e^{\alpha})r_{u,w}\Bigr)^\vee = \prod_{\alpha \in
  S(u,w)}(1-e^{-\alpha})
\frac{\MC_{y^{-1}}(Y(u)^\circ)|_w}{\MC_{y^{-1}} (Y(w)^\circ)|_w}
\/, 
\] 
therefore it suffices to show that the right-hand side is holomorphic
in $y$.  Using Proposition~\ref{prop:charmotoppo}, the definition of
$S(u,w)$ \eqref{E:defS}, and the description of $\MC_y(Y(w)^\circ)|_w$
from Theorem~\ref{thm:MCsmooth} (and its proof), we obtain
\[ 
\begin{split} 
\prod_{\alpha\in S(u,w)}(1-e^{-\alpha})
\frac{\MC_{y^{-1}}(Y(u)^\circ)|_w} {\MC_{y^{-1}} (Y(w)^\circ)|_w} &
= \frac{\MC_{y^{-1}}(Y(u)^\circ)|_w} {\lambda_{y^{-1}} (T^*_w
  Y(w))} \times \frac{\prod_{\alpha \in S(u,w)}
  (1-e^{-\alpha})}{\prod_{\alpha > 0, ws_\alpha < w} (1 -
  e^{w\alpha})} \\ 
& = \frac{\MC_{y^{-1}}(Y(u)^\circ)|_w} {\lambda_{y^{-1}} (T^*_w
  Y(w))} \times \frac{\prod_{\alpha > 0, u \leq w s_\alpha <
    w}(1-e^{w\alpha})}{\prod_{\alpha > 0, ws_\alpha < w} (1 -
  e^{w\alpha})} \\ 
& = \frac{\MC_{y^{-1}}(Y(u)^\circ)|_w}{\lambda_{y^{-1}} (T^*_w
  Y(w)) \cdot \prod_{\alpha > 0, u \nleq ws_\alpha < w} (1 -
  e^{w\alpha})} \/. 
\end{split}
\] 
The last expression is a holomorphic function by
Theorem~\ref{thm:mcdiv} and we are done.
\end{proof}

We also record the following result, obtained in the proof of
Theorem~\ref{thm:hol}.
\begin{prop}
The coefficients $r_{u,w}$ are obtained from the expansion: 
\[ 
\MC_{y}^\vee(Y(u)^\circ) = \sum \overline{r}_{u,w} b_w  \/
\] 
or, equivalently 
\[ 
(\overline{r}_{u,w})^\vee =  \frac{\MC_{y}(Y(u)^\circ)|_w}
{\MC_{y}(Y(w)^\circ)|_w} \: 
\in \Frac\left(K_\LangT(\pt)[y^{\pm 1}] \right)  \quad \/. 
\] 
\end{prop}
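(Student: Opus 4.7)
I would directly identify the coefficient of $b_w$ in the expansion of $MC_y^\vee(Y(u)^\circ)$ with $\bar r_{u,w}$ by localization, and then recognize the second equality as one that has already been established in the course of the proof of Theorem~\ref{thm:hol}.

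First, let $c_{u,w}$ denote the coefficient of $b_w$ in the expansion $MC_y^\vee(Y(u)^\circ) = \sum_w c_{u,w}\, b_w$. Since $b_v$ is a nonzero scalar multiple of the fixed-point class $\iota_v$, which vanishes under restriction to any other fixed point, localizing at $e_w$ gives, exactly as in the proof of Proposition~\ref{prop:newformula}(a),
\[
c_{u,w} \;=\; \frac{MC_y^\vee(Y(u)^\circ)|_w}{MC_y^\vee(Y(w)^\circ)|_w}.
\]
I would then apply Theorem~\ref{thm:dual1} separately to numerator and denominator to rewrite $MC_y^\vee(Y(v)^\circ)$ in terms of the Grothendieck--Serre dual of the ordinary motivic Chern class. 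The common prefactor $\prod_{\alpha>0}(1+ye^{-\alpha})/\lambda_y(T^*(G/B))$ cancels in the ratio, reducing it to $\calD(MC_y(Y(u)^\circ))|_w \,/\, \calD(MC_y(Y(w)^\circ))|_w$.

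The key step is to track the effect of $\calD$ on fixed-point localizations. From the definition $\calD(\calF) = (-1)^{\dim G/B}[\calF]^\vee \otimes [\calL_{2\rho}]$ (combined with $y \mapsto y^{-1}$), the restriction at $e_w$ becomes $(-1)^{\dim G/B}e^{2w\rho}\cdot (\calF|_w)^\vee$. Both the sign and the factor $e^{2w\rho}$ cancel in the ratio, yielding
\[
c_{u,w}^\vee \;=\; \frac{MC_y(Y(u)^\circ)|_w}{MC_y(Y(w)^\circ)|_w}.
\]

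Finally, the computation inside the proof of Theorem~\ref{thm:hol} already established that $(\bar r_{u,w})^\vee = MC_y(Y(u)^\circ)|_w/MC_y(Y(w)^\circ)|_w$. Comparing with the previous display and using that $\vee$ is an involution, we conclude $c_{u,w} = \bar r_{u,w}$, which is the first asserted identity; the second identity is then a restatement of the formula obtained in Theorem~\ref{thm:hol}. I do not foresee a substantive obstacle beyond careful bookkeeping of the two related involutions $\bar{\cdot}$ (inverting only $y$) and $\vee$ (inverting both $y$ and the $T$-weights) when evaluating Grothendieck--Serre duality at fixed points; no new geometric input is required.
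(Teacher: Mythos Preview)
Your proposal is correct and follows essentially the same route as the paper: the proposition is stated there as a fact ``obtained in the proof of Theorem~\ref{thm:hol}'', and indeed the second identity is exactly the displayed computation $(\bar r_{u,w})^\vee = MC_y(Y(u)^\circ)|_w / MC_y(Y(w)^\circ)|_w$ carried out there via M\"obius inversion and Proposition~\ref{prop:newformula}(a). For the first identity, your direct localization argument (mirroring the proof of Proposition~\ref{prop:newformula}(a) with $Y(u)^\circ$ in place of $Y(u)$) is a perfectly good alternative to the implicit M\"obius-inversion route $MC_y^\vee(Y(u)^\circ)=\sum_{x\ge u}(-1)^{\ell(x)-\ell(u)}MC_y^\vee(Y(x))$ combined with~\eqref{E:MCdualexp}; the two are equivalent and no new input is needed.
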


\begin{remark} 
The coefficients $r_{u,w}$ satisfy other remarkable properties, such
as a certain duality and orthogonality; see~\cite{nakasuji2015yang,
  bump2017casselman}. We will study these properties using motivic
Chern classes in an upcoming note.  
\qede\end{remark}

\section{Appendix: proof of Lemma~\ref{lemma:autos}}
Recall Lemma~\ref{lemma:autos}. 

\begin{lemma}\label{lemma:autosinapp} 
(a) Let $u,w \in W$. Under the left Weyl group multiplication, 
\[ 
w. \stab_{\fC, T^{1/2}, \calL}(u) 
= \stab_{w \fC, w T^{1/2}, w.\calL}(wu) \/. 
\] 
In particular, if both the polarization $T^{1/2}$ and the line bundle
$\calL$ are $G$-equivariant, then
\[ 
w. \stab_{\fC, T^{1/2}, \calL}(u) 
= \stab_{w \fC, T^{1/2}, \calL}(wu) \/. 
\]

(b) The duality automorphism acts by sending $q \mapsto q^{-1}$ and 
\begin{equation}\label{equ:appdual}
(\stab_{\fC,T^{\frac{1}{2}}, \calL}(w))^\vee
=q^{-\frac{\dim G/B}{2}}\stab_{\fC,T^{\frac{1}{2}}_{\opp}, \calL^{-1}}(w),
\end{equation}
where $T^{\frac{1}{2}}_{\opp}:=q^{-1}(T^{\frac{1}{2}})^\vee$
is the opposite polarization; see~\cite[Equation~(15)]
{okounkov2016quantum}, i.e., this duality changes the polarization
and slope parameters to the opposite ones, while keeping the chamber
parameter invariant.

(c) Fix integral weights $\lambda, \mu \in X^*(T)$ and the equivariant
line bundles $\calL_\lambda= G \times^B \bbC_\lambda$ and
$\calL_\mu$. Let $a \in \bbQ$ be a rational number. Then
\[ 
\stab_{\fC, T^{1/2},a \calL_{\lambda} \otimes
  \calL_\mu}(w) = e^{- w(\mu)} \calL_\mu \otimes
\stab_{\fC, T^{1/2},a \calL_{\lambda}}(w) \/, 
\] 
as elements in $K_{T\times\bbC^*}(T^*(G/B))$.
\end{lemma}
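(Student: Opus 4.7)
The plan is to verify each of the three statements by checking that the proposed right-hand side satisfies the uniqueness characterization of Theorem~\ref{thm:geostable}, namely the support, normalization, and degree conditions for the parameter triple appearing on the left-hand side. For part~(a), I first note that left multiplication by $w$ on $T^*(G/B)$ sends the fixed point $e_v$ to $e_{wv}$ and, since a cocharacter $\sigma \in \fC$ is translated to $w\sigma \in w\fC$, maps $\Attr_\fC(v)$ to $\Attr_{w\fC}(wv)$. This transports the partial order $\preceq_\fC$ to $\preceq_{w\fC}$, so the support condition follows immediately. For the normalization, identity~\eqref{equ:weyl} gives $(w\cdot \Gamma)|_{wu} = w(\Gamma|_u)$, and applying $w$ transforms $N_{u,\pm}$, $N_u^{1/2}$, and $\calO_{\Attr_\fC(u)}|_u$ into their $w\fC$- and $wT^{1/2}$-counterparts at $wu$. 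The degree condition is transported in the same way, since $w$ translates Newton polygons by the $w$-action on $X^*(T)$, matching the change in slope from $\calL$ to $w\calL$. The $G$-equivariant special case is immediate, since then $wT^{1/2} = T^{1/2}$ and $w\calL = \calL$.

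For part~(b), I would apply $(\cdot)^\vee$ to each of the three defining properties of $\stab_{\fC,T^{1/2},\calL}(w)$ and verify that, up to the overall scalar $q^{-\dim G/B /2}$, the resulting conditions are exactly those of $\stab_{\fC,T^{1/2}_{\textit{opp}},\calL^{-1}}(w)$. Support is preserved, as duality is a local operation. For the normalization at $w$, I would expand $\calO_{\Attr_\fC(w)}|_w$ as the $\lambda_{-1}$-class of $N_{w,-}^\vee$ and use the identity $N_{w,-} - N_w^{1/2} = q^{-1}(N_{w,+}^{1/2})^\vee - N_{w,+}^{1/2}$ recalled in \S\ref{s:MCstab} to show that $(\cdot)^\vee$ turns the normalization formula into the corresponding formula for the opposite polarization $T^{1/2}_{\textit{opp}} = q^{-1}(T^{1/2})^\vee$; the factor $q^{-\dim G/B /2}$ absorbs the net $q$-power produced by the substitution $q^{1/2} \mapsto q^{-1/2}$ inside the square root of determinants. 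The degree condition is reflected as $P \mapsto -P$ under $(\cdot)^\vee$, which matches the change from slope $\calL$ to $\calL^{-1}$. I expect this to be the main obstacle, since keeping track of the square roots of determinants and of the sign and $q$-power corrections is the most delicate piece of bookkeeping; ultimately the computation should reproduce formula~(15) of~\cite{okounkov2016quantum}.

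For part~(c), set $\Gamma := e^{-w\mu}\calL_\mu \otimes \stab_{\fC,T^{1/2},a\calL_\lambda}(w)$ and verify the three defining properties for $\stab_{\fC,T^{1/2},a\calL_\lambda \otimes \calL_\mu}(w)$. Support is preserved under tensoring by a line bundle. The normalization, which does not depend on slope, matches at $w$ because $\calL_\mu|_w = e^{w\mu}$ cancels the scalar $e^{-w\mu}$ in front. For the degree condition at $v \prec_\fC w$, tensoring by $e^{-w\mu}\calL_\mu$ shifts the Newton polygon at $v$ by $\calL_\mu|_v - \calL_\mu|_w = v\mu - w\mu$, which is exactly the contribution of $\calL_\mu$ to the slope shift $(a\calL_\lambda \otimes \calL_\mu)|_v - (a\calL_\lambda \otimes \calL_\mu)|_w$; combined with the degree condition already satisfied by $\stab_{\fC,T^{1/2},a\calL_\lambda}(w)$, the new degree condition holds. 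The general statement for arbitrary equivariant line bundles in Lemma~\ref{lemma:autos} follows because $T^*(G/B)$ is a vector bundle over $G/B$, so pullback gives $\Pic_T(G/B) \simeq \Pic_T(T^*(G/B))$, and the scalar $e^{-w\mu}$ is the same as $\frac{1}{\calL_\mu|_w}$ as written in the original statement.
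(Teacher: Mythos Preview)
Your approach is correct and essentially the same as the paper's: all three parts are proved by invoking the uniqueness in Theorem~\ref{thm:geostable} and verifying support, normalization, and degree for the appropriately transformed class. The only notable difference is in part~(b): where you outline the normalization check and flag it as the delicate step, the paper carries it out explicitly by naming the torus weights $\{\gamma_j\}$ of $T^{1/2}_{>0}|_F$ and $\{\beta_i\}$ of $T^{1/2}_{<0}|_F$, expanding $\calO_{\Attr_\fC(w)}|_w=\lambda_{-1}(N_-^\vee)$ as a product over these weights, and then comparing directly with the analogous product for $T^{1/2}_{\textit{opp}}$ to isolate the factor $q^{-\dim G/B/2}$. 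Your treatment of the degree condition in~(b) (reflection of Newton polygons matching $\calL\mapsto\calL^{-1}$) is in fact more explicit than the paper's one-line remark, and your part~(c) spells out what the paper compresses into ``follows directly from uniqueness.''
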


\begin{proof} 
Part (a) can be proved directly by checking that the left-hand side 
satisfies the defining properties of the stable basis on the right-hand side.
Part (b) is~\cite[Equation (15)]{okounkov2016quantum} with
$\hbar^{-\frac{\dim X}{2}}$ replaced by $q^{-\frac{\dim G/B}{2}}$ \begin{footnote}
{In~\cite{okounkov2016quantum} the variety $X$ is the symplectic
  resolution, and it corresponds to our $T^*(G/B)$, so
  $\hbar^{-\frac{\dim X}{2}}$ should be $q^{-\dim G/B}$; the missing
  factor of $\frac{1}{2}$ is a typo.}\end{footnote}. Since this equation is not 
  proved in {\it loc.~cit.}, we include a proof in the case when the fixed point set 
  $X^T$ is finite
(e.g., $X=T^*(G/B)$).\footnote{Our $T$ is
  denoted by $A$ in {\it loc.~cit.} and this is the torus
  preserving the symplectic form of $X$.} By the uniqueness of stable
envelopes, we need to show $q^{\frac{\dim X}{4}}(\stab_{\fC,
  T^{\frac{1}{2}},\calL})^\vee$ satisfies the defining properties of
$\stab_{\fC, T_{\opp}^{\frac{1}{2}},\calL^{-1}}$. The support
condition is obvious, and the degree condition follows because
$\deg_T$ remains unchanged after multiplication by powers of $q^{\pm
  \frac{1}{2}}$. We turn to the normalization condition. Denote by $F$
a component of $X^T$ (a point, in our case), and use the notation
$N_+$, $N_-$, $N^{\frac{1}{2}}$, etc., for the appropriate normal
subspaces to $F$, as in the paragraphs preceding
Theorem~\ref{thm:geostable}. Since
$N_--T^{\frac{1}{2}}=q^{-1}(T^{\frac{1}{2}}_{>0})^\vee
-T^{\frac{1}{2}}_{>0}$ (see~\cite[p.~13]{okounkov2016quantum}), the
normalization is (see~\cite[Equation~(10)]{okounkov2016quantum})
\[
\stab_{\fC, T^{\frac{1}{2}},\calL}|_{F}
=(-1)^{\rk T^{\frac{1}{2}}_{>0}}\left(\frac{\det N_-}
{\det T^{\frac{1}{2}}}\right)^{\frac{1}{2}}
\calO_{\Attr}|_{F}=(-1)^{\rk T^{\frac{1}{2}}_{>0}}
q^{-\frac{\rk T^{\frac{1}{2}}_{>0}}{2}}
(\det T^{\frac{1}{2}}_{>0})^\vee\calO_{N_+}|_{F}.
\] 
The last equality holds because the normal bundle of $\Attr$
at $F$ is spanned by the {\em non-attracting} weights at $F$; this is
the same as the normal bundle of $N_+$ inside $N$, therefore
$\calO_{\Attr}|_{F} = \calO_{N_+}|_{F} = \lambda_{-1}^{T
  \times \bbC^*}(N_-^\vee)$.

Let $\{ \gamma_j\}$ be the torus weights of $T^{\frac{1}{2}}_{>0}|_F$
and let $\{\beta_i \}$ be the torus weights of
$T^{\frac{1}{2}}_{<0}|_F$.  Since
$T(X)=T^{\frac{1}{2}}+q^{-1}(T^{\frac{1}{2}})^\vee$, the torus weights
of $N_-|_{F}$ are $\{ \beta_i \}$ and $\{ q^{-1}\gamma_j^{-1} \}$. We
abuse notation and write $\lambda$ for $e^\lambda \in R(T)$. Then,
\begin{align}
\stab_{\fC, T^{\frac{1}{2}},\calL}|_{F}&=(-1)^{\rk
  T^{\frac{1}{2}}_{>0}}q^{-\frac{\rk T^{\frac{1}{2}}_{>0}}{2}}\prod_j
\gamma_j^{-1}\prod_i(1-\beta_i^{-1})\prod_j(1-q\gamma_j)\nonumber\\ 
&=q^{\frac{\rk T^{\frac{1}{2}}_{>0}}{2}}\prod_i(1-\beta_i^{-1})
\prod_j(1-q^{-1}\gamma_j^{-1}).\label{equ:nor}
\end{align}
Since $T^{\frac{1}{2}}_{\opp}=q^{-1}(T^{\frac{1}{2}})^\vee$,
the torus weights of $T^{\frac{1}{2}}_{\opp,>0}|_F$ are 
$\{q^{-1}\beta_i^{-1} \}$ and the torus weights of
$T^{\frac{1}{2}}_{\opp,<0}|_F$ are $\{
q^{-1}\gamma_j^{-1}\}$. A similar calculation shows
\[
\stab_{\fC, T^{\frac{1}{2}}_{\opp},\calL^{-1}}|_{F}
=q^{\frac{\rk T^{\frac{1}{2}}_{<0}}{2}}\prod_j(1-q\gamma_j)
\prod_i(1-\beta_i).
\]
Taking the dual of \eqref{equ:nor}, we get 
\[
\left(\stab_{\fC, T^{\frac{1}{2}},\calL}\right)^\vee|_{F}
=q^{-\frac{\rk T^{\frac{1}{2}}_{>0}}{2}}\prod_i(1-\beta_i)
\prod_j(1-q\gamma_j).
\]
Therefore, 
\[
\left(\stab_{\fC, T^{\frac{1}{2}},\calL}\right)^\vee|_{F}
=q^{-\frac{\rk T^{\frac{1}{2}}}{2}}
\stab_{\fC, T^{\frac{1}{2}}_{\opp},\calL^{-1}}|_{F}
=q^{-\frac{\dim X}{4}}\stab_{\fC, T^{\frac{1}{2}}_{\opp},
\calL^{-1}}|_{F}.
\]
This proves the normalization condition, whence part (b). Part (c)
follows directly from the uniqueness of the stable envelope.
\end{proof}

\bibliographystyle{halpha}
\bibliography{AENSbiblio}

\end{document}